\newcommand{\codim}{{\rm codim}}
\newcommand{\Pic}{{\rm Pic}}
\newcommand\cE{{\mathcal E}}
\newcommand\cF{{\mathcal F}}
\newcommand\cA{{\mathcal A}}
\newcommand\sH{{\mathcal H}}
\newcommand\sI{{\mathcal I}}
\newcommand\bZ{{\mathbb Z}}
\newcommand\bQ{{\mathbb Q}}
\newcommand\Hom{{\rm Hom}}
\newcommand\cO{{\mathcal O}} 
\newcommand\cQ{{\mathcal Q}}
\def\Psef{\mathop{\rm Psef}\nolimits}
\def\Id{\mathop{\rm Id}\nolimits}
\def\rank{\mathop{\rm rank}\nolimits}
\def\Pic{\mathop{\rm Pic}\nolimits}
\def\tor{\mathop{\rm tor}\nolimits}
\def\Im{\mathop{\rm Im}\nolimits}
\def\Mov{\mathop{\rm Mov}\nolimits}
\def\dbar{\overline\partial}
\def\ddbar{i\partial\overline\partial}
\def\cO{{\mathcal O}}
\def\cE{{\mathcal E}}
\def\cF{{\mathcal F}}
\def\cC{{\mathcal C}}
\def\cG{{\mathcal G}}
\def\cX{{\mathcal X}}
\let\ep=\varepsilon
\let\wh=\widehat
\def\bQ{{\mathbb Q}}
\def\bZ{{\mathbb Z}}
\newtheorem{theorem}{Theorem}[section]
\newtheorem{lemma}[theorem]{Lemma}
\newtheorem{claim}[theorem]{Claim}
\newtheorem{proposition}[theorem]{Proposition}
\newtheorem{question}[theorem]{Question}
\newtheorem{conjecture}[theorem]{Conjecture}
\newtheorem{defn}[theorem]{Definition}
\newtheorem{cor}[theorem]{Corollary}
\theoremstyle{remark}
\newtheorem{remark}[theorem]{Remark}
\title{Remarks on Relative Canonical Bundles \\ and Algebraicity Criteria for Foliations \\ in Kähler context}
\author{Junyan CAO, Mihai P\u AUN}
\address{Université Côte d’Azur, CNRS, LJAD, France}
\email{junyan.cao@univ-cotedazur.fr}
\begin{document}

\address{Universit\"at Bayreuth, Lehrstuhl Mathematik VIII, Germany}
\email{mihai.paun@uni-bayreuth.de}

\maketitle

\section{Introduction}

\noindent In this note, motivated by the recent preprint \cite{Ou}, we pursue three main objectives. The first is to make progress towards the fundamental conjecture proposed in \cite{CH} by Cao-Höring. 

\begin{conjecture}\label{conjc-h}\cite{CH} 
	Let $p:X\to Y$ be a holomorphic surjective map between two compact Kähler manifolds. Let $(L, h_L)$ be a line bundle over $X$, endowed with a possibly singular metric $h_L$ such that $i\Theta_{h_L} (L)\geq 0$ on $X$ in the sense of currents. Assume that the following conditions hold:
	\begin{itemize} 
		\item The multiplier ideal sheaf of $h_L$ is trivial.
		\item The restriction of the adjoint bundle $K_X+ L$ to the fiber $X_y$ of $p$ over $y$, denoted as $\displaystyle (K_X+ L)|_{X_y}$, is pseudo-effective for generic $y\in Y$.
	\end{itemize}
	Then the twisted relative canonical bundle $K_{X/Y}+ L$ is pseudo-effective.
\end{conjecture}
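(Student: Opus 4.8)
The plan is to follow the scheme of Cao--H\"oring in the projective case: on the generic fibre generate enough positivity in the adjoint class $c_1(K_X+L)|_{X_y}$, transport it to $X$ through a fibrewise Bergman-kernel metric on (a twist of) $K_{X/Y}+L$, and conclude by a limiting argument together with the closedness of the pseudo-effective cone. The first reduction is birational: both the conclusion and the two hypotheses are preserved under bimeromorphic modifications of $X$ --- this is exactly why $\mathcal I(h_L)=\mathcal O_X$ is imposed, it makes $K_X+L$ birationally robust --- so we may pass to a good model in which $X$ and $Y$ remain compact K\"ahler, $p$ has connected fibres, and its discriminant is a simple normal crossing divisor $D\subset Y$ away from which $p$ is smooth. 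Fix a K\"ahler class $\omega$ on $X$.

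\textbf{The fibrewise step.} For generic $y\notin D$ the class $c_1(K_X+L)|_{X_y}=c_1(K_{X_y}+L|_{X_y})$ is pseudo-effective, hence $c_1(K_{X_y}+L|_{X_y})+\varepsilon\,\omega|_{X_y}$ is big for all $\varepsilon>0$. In the projective case one takes $\varepsilon\omega$ to be a small ample $\mathbb Q$-divisor $A$, deduces by a relative pluricanonical argument that $m(K_{X_y}+L|_{X_y}+A|_{X_y})$ is effective for $m\gg 0$, and forms the fibrewise Bergman-kernel metric on $m(K_{X/Y}+L+A)$ out of the $L^2$ norms of fibrewise pluricanonical sections measured against $h_L^m$ --- well defined because $\mathcal I(h_L|_{X_y})=\mathcal O_{X_y}$ by the total-space hypothesis. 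By Berndtsson's positivity theorem (in the P\u aun--Takayama form that allows the twist and singular fibres, and ultimately rests on Ohsawa--Takegoshi) this metric has semipositive curvature over $Y\setminus D$. In the genuinely K\"ahler setting the obstruction surfaces here: $\varepsilon\omega$ is not a line bundle, so one cannot produce fibrewise pluricanonical sections of a line bundle; instead one is forced to work with a fibrewise closed positive current $T_y\in c_1(K_X+L)|_{X_y}$ and to replace the whole Bergman-kernel construction by a relative $L^2$-extension statement producing, for a small polydisc $U$ around $y$, a closed positive current in $c_1(K_{X/Y}+L)$ on $p^{-1}(U)$ restricting to $T_y$ on the central fibre, with bounds uniform in $y$.

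\textbf{Globalisation and limit.} Granting the fibrewise positivity, the variation-of-metrics results of Berndtsson and P\u aun--Takayama (and their K\"ahler refinements) glue it into a singular metric with semipositive curvature on $K_{X/Y}+L+(\text{small twist})$ over $X\setminus p^{-1}(D)$; since adjoint direct images have at worst logarithmic behaviour along a simple normal crossing discriminant, this metric extends across $p^{-1}(D)$ to all of $X$, so $K_{X/Y}+L+(\text{small twist})$ is pseudo-effective. Letting the twist tend to zero and invoking once more the closedness of the pseudo-effective cone of a compact K\"ahler manifold yields the pseudo-effectivity of $K_{X/Y}+L$.

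\textbf{The main obstacle.} Everything reduces to the non-projective K\"ahler case of the fibrewise step: with no ample line bundle to twist by and no relative minimal model programme for K\"ahler fibrations at hand, one cannot fall back on honest fibrewise pluricanonical sections, and must instead establish a relative $L^2$-extension theorem for the pseudo-effective currents themselves, with estimates uniform in $y$ and in the vanishing twist. Once that fibrewise mechanism is available, the gluing, the extension across the discriminant and the limiting argument are comparatively routine. A natural intermediate target that the methods above already handle is the case where $p$ is projective, or where $c_1(K_X+L)|_{X_y}$ is big rather than merely pseudo-effective.
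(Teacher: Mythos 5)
The statement you were given is a conjecture: the paper does not prove it, no one has, and your proposal --- as you honestly acknowledge --- does not prove it either, so there is no complete argument on either side to compare. What the paper actually establishes is the special case (Theorems 1.2 and 3.1) under the additional hypothesis that there exist a K\"ahler form $\omega$ and a holomorphic $(2,0)$-form $\sigma$ on $X$ with $\omega+\sigma+\overline{\sigma}$ representing a rational class and $\sigma|_{X_y}=0$ on the generic fibre; this forces $p$ to be locally projective over a Zariski-open subset of $Y$, which is the regime in which the Bergman-kernel machinery can be switched on at all. (And the conclusion obtained there is in fact slightly stronger, namely that $K_{X/Y}+L-D(p)$ is pseudo-effective.)

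Your outline correctly identifies the basic obstruction --- no ample line bundle on a general compact K\"ahler $X$, hence no fibrewise pluricanonical sections to feed into an $L^{2/k}$-Bergman construction --- but it misreads where the real difficulty sits even in the locally projective case. You write that, granted the fibrewise mechanism, ``the gluing\ldots\ [is] comparatively routine.'' The paper asserts precisely the opposite: the main challenge is that the fibrewise metric on $K_{\mathcal{X}/U}+L+\tfrac{1}{m_0}\mathcal{H}_U$ depends on the choice of relatively ample bundle $\mathcal{H}_U$ over the chart $U\subset Y$, and over $U_1\cap U_2$ the two choices restrict to a fibre to line bundles differing by some $\rho\in\Pic^0(X_y)$, so the two metrics have no a priori reason to agree. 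The decisive new ingredient is Lemma 3.3, used in Step two of the proof of Theorem 3.1: the fibrewise minimal $L^{2/k}$-metric is \emph{independent} of the twist $\rho\in\Pic^0$, proved via Shokurov's trick and Nadel vanishing. This $\Pic^0$-invariance is exactly what makes the local constructions patch, and your sketch contains no substitute for it. Once it is available, the remaining steps (the $D(p)$-contribution, extension across $p^{-1}(W)$, the limit $m_0\to\infty$) really are routine, so in that narrower sense your instincts are right. Finally, your claim that the methods ``already handle'' the case where $c_1(K_X+L)|_{X_y}$ is merely big is unjustified: bigness of a transcendental $(1,1)$-class still produces no fibrewise sections of a line bundle, so the same obstruction applies.
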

\medskip

\noindent In this generality, the main difficulty is that there is no obvious candidate for a natural, positively curved metric on $K_{X/Y}+ L$ (as opposed to the projective case). Nevertheless, we establish the following particular case of Conjecture \ref{conjc-h} (the definition of $D(p)$ is recalled in the next section).

\begin{theorem}\label{thm3}
	In the setting of Conjecture \ref{conjc-h}, further assume that there exists a Kähler metric $\omega$ and a holomorphic 2-form $\sigma$ on $X$ such that $\omega+ \sigma + \overline{\sigma}$ is a rational class and that the restriction of $\sigma$ to the generic fiber of $p$ vanishes. Then $K_{X/Y}+ L- D(p)$ is pseudo-effective.
\end{theorem}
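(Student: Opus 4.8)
\emph{Proposal.} The plan is to use the holomorphic $2$-form to make the generic fibre projective, with a polarisation that is the restriction of a single global class on $X$; to transport pseudo-effectivity from the generic fibre to $X$ over the locus where $p$ is smooth by a relative $m$-Bergman kernel construction; and finally to account for the degenerate fibres by the divisor $D(p)$.

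\emph{Step 1 (a global fibrewise polarisation).} Put $\alpha:=\{\omega+\sigma+\overline\sigma\}\in H^2(X,\bQ)$ and fix $N\in\bN$ with $N\alpha\in H^2(X,\bZ)$. For generic $y\in Y$ the fibre $X_y$ is smooth and $\sigma|_{X_y}=0$, hence $\alpha|_{X_y}=\{\omega|_{X_y}\}$ is a rational Kähler class; by Kodaira's embedding theorem $X_y$ is projective, and $N\alpha|_{X_y}=c_1(A_y)$ for an ample line bundle $A_y$ on $X_y$. The decisive point is that this polarisation is the restriction of the \emph{globally defined} integral class $N\alpha$: over the dense Zariski-open $Y^\circ\subseteq Y$ on which $p$ is smooth, $y\mapsto N\alpha|_{X_y}$ is a monodromy-invariant section of $R^2p_*\bZ$ that is fibrewise of type $(1,1)$ and ample. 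By the relative exponential sequence and the theory of the relative Néron--Severi group, over every sufficiently small Stein open $V\subseteq Y^\circ$ this section is represented by a $p$-ample line bundle $\cA_V$ on $p^{-1}(V)$ with $c_1(\cA_V|_{X_y})=N\alpha|_{X_y}$; this is enough for what follows, since the metric on $K_{X/Y}+L$ produced below will not depend on these local choices. This is exactly the datum that is unavailable for a general Kähler fibration, where no natural positively curved metric on $K_{X/Y}+L$ presents itself, and it is the only place where the hypothesis is used in an essential way.

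\emph{Step 2 (positivity over the smooth locus).} By a standard property of $D(p)$ (recalled in the next section) and by pushing forward positive currents along a modification, it suffices to prove the statement after blowing up $X$; so we may assume $p$ is flat and $E:=p^{-1}(Y\setminus Y^\circ)_{\mathrm{red}}$ is a simple normal crossings divisor, the local polarisations $\cA_V$ and the hypotheses of Conjecture~\ref{conjc-h} persisting over $Y^\circ$. Work over a small Stein $V\subseteq Y^\circ$. For rational $\varepsilon=1/j>0$, the $\bR$-class $(K_X+L)|_{X_y}+\varepsilon\alpha|_{X_y}$ is big (pseudo-effective plus ample), so for infinitely many $m$ (those divisible by $Nj$) the line bundle $m(K_X+L)|_{X_y}+\tfrac{m}{Nj}\cA_V|_{X_y}$ is effective on $X_y$. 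Equipping $K_{X/Y}+L+\tfrac1{Nj}\cA_V$ on $p^{-1}(V)$ with the relative $m$-Bergman kernel metric, which is semipositively curved thanks to the Ohsawa--Takegoshi type $L^2$-extension theorem valid on Kähler manifolds, and letting $m\to\infty$ and then $j\to\infty$, the $\cA_V$-contribution drops out; gluing over $V$ one obtains a closed positive current $T^\circ$ in $c_1\!\big((K_{X/Y}+L)|_{p^{-1}(Y^\circ)}\big)$ whose local weights are bounded above by $p^*$ of quasi-plurisubharmonic functions coming from sections living on $X$.

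\emph{Step 3 (crossing the degenerate fibres, and the main obstacle).} It remains to extend $T^\circ$ across $E$ and to identify the loss with $D(p)$. The weight bound of Step~2 should force $T^\circ$ to have locally finite mass near $E$, so that its trivial extension $\widetilde T$ is a closed positive current on $X$ whose class has the form $c_1(K_{X/Y}+L)-\{G\}$ for an effective divisor $G$ supported on $E$; a local computation of the $m$-Bergman metric along each component $E_i$, governed by the multiplicity of $p$ there and by the comparison of $K_{X/Y}$ with the relative canonical bundle of a local semistable model, should give $G\ge D(p)$. Then $\widetilde T+(G-D(p))$ is a closed positive current in $c_1(K_{X/Y}+L-D(p))$, as desired. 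The main obstacle is precisely this last step: deriving the finite-mass bound for $T^\circ$ near the degenerate fibres from the (delicate) weight estimates of Step~2, and matching the resulting defect with $D(p)$; everything else is a matter of combining Kodaira's theorem, the standard spreading-out of polarisations, and the classical positivity theory for twisted relative pluricanonical bundles (see \cite{CH} and the references therein).
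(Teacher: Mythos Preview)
Your overall architecture matches the paper's, but you have misidentified the main obstacle and, as a consequence, the genuine gap in your argument lies in Step~2, not Step~3.

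\medskip

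\textbf{The gluing problem.} You assert in Step~1 that ``the metric on $K_{X/Y}+L$ produced below will not depend on these local choices'' and in Step~2 you simply write ``gluing over $V$''. This is exactly the point that requires proof, and the paper singles it out as the main new difficulty. Over two overlapping Stein opens $V_1,V_2\subset Y^\circ$, the local polarisations $\cA_{V_1}|_{X_y}$ and $\cA_{V_2}|_{X_y}$ have the same Chern class $N\alpha|_{X_y}$, but as line bundles they differ by an element $\rho_y\in\Pic^0(X_y)$ which in general varies with $y$ and need not be trivial. The relative $m$-Bergman kernel metric on $K_{X/Y}+L+\tfrac{1}{Nj}\cA_{V_i}$ is built from sections of $m(K_{X_y}+L|_{X_y})+\tfrac{m}{Nj}\cA_{V_i}|_{X_y}$, and these spaces of sections are \emph{not} the same for $i=1,2$; there is no reason a priori for the resulting metrics (or their limits as $m\to\infty$, or as $j\to\infty$) to agree on $p^{-1}(V_1\cap V_2)$. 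Letting $j\to\infty$ kills the $\cA_V$-contribution at the level of cohomology classes, but not at the level of currents: the limiting current over $V_1$ was produced using $\cA_{V_1}$ throughout, and you have given no argument that it equals the one produced using $\cA_{V_2}$.

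The paper resolves this by working not with the $m$-Bergman metric for a single $m$, but with the fibrewise \emph{metric with minimal singularities} $h_{\min}$ on $K_{X_y}+L|_{X_y}+\tfrac{1}{m_0}A$, defined as an infimum over all sufficiently divisible $k$ \emph{and} over all twists $\rho\in\Pic^0(X_y)$. A separate lemma (using Shokurov's nonvanishing trick and Nadel vanishing to compare $H^0$'s of bundles differing by $\Pic^0$-twists) shows that $h_{\min}=h_{\min,\rho}$ for every $\rho\in\Pic^0(X_y)$; this is what makes the locally constructed metrics canonical and hence glue over $Y^\circ$. Your proposal contains no substitute for this step.

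\medskip

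\textbf{On Step~3.} Your proposed extension via ``finite mass $\Rightarrow$ trivial extension with defect $G\ge D(p)$'' is more roundabout than needed. The paper separates the two issues: the inequality $\Theta\ge \sum(m_i-1)[W_i]$ is proved directly by a local computation of the $L^{2/k}$-Bergman weight near a generic point of $W_i$ (writing $p$ as $(z_1,\dots,z_{n+s})\mapsto(z_1^{m_i},z_2,\dots,z_s)$ and observing $F_u/z_1^{m_i-1}\in L^2$), while the extension of the current across $p^{-1}(W)$ is obtained by showing the potential is bounded \emph{above} near the bad set, via the local $L^{2/k}$ Ohsawa--Takegoshi extension. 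Your formulation is not wrong in spirit, but the ``finite mass'' bound you propose is not what one actually proves; the upper bound on the weight is.
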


\medskip

\noindent Our second main result provides a simple proof of the algebraicity criterion established in \cite{Ou}.  Ou's algebraicity criterion provides an important extension to the Kähler case of the results in \cite{Bos01, BM16}, and more recently \cite{CP19} (see also \cite{Dru}). The version we prove here is as follows.

\begin{theorem}\label{mainthm}
	Let $X$ be a compact Kähler manifold, and let $C \subset X$ be a compact submanifold. Let $S_0$ be an irreducible, locally closed submanifold containing $C_0$, where $C_0$ is a dense open subset of $C$ satisfying $\codim_C (C\setminus C_0) \geq 2$.  
	Let $M$ be the Zariski closure of $S_0$ in $X$.  
	If $\dim M > \dim S_0$, then the conormal bundle $\mathcal{N}^*_{C_0/S_0}$ is pseudo-effective (abbreviated as "psef") in the sense of Definition \ref{anapsf}.
	
	In particular, let $\cF \subset T_X$ be a holomorphic foliation on $X$. If $\cF^\star$ is not psef, then $\cF$ is induced by a meromorphic map. 
\end{theorem}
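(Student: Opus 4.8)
\medskip

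\noindent\textbf{Proof proposal for the last assertion.}
I would deduce it from the first part of Theorem~\ref{mainthm} by contraposition. So assume $\cF$ is \emph{not} induced by a meromorphic map --- equivalently, $\cF$ is not algebraically integrable, i.e.\ the Zariski closure of a general leaf of $\cF$ has dimension strictly larger than $\rk\cF$ --- and let us prove that $\cF^\star$ is psef (in the sense of Definition~\ref{anapsf}). Recall the algebraic‑closure fibration $\phi\colon X\dashrightarrow Y$: the Zariski closures of the $\cF$‑leaves through general points of $X$ are the fibres of $\phi$, so a general fibre $F$ of $\phi$ is the Zariski closure of a general leaf, and the assumption says exactly $\dim F>\rk\cF$. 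Beyond this, the argument uses two ingredients: a \emph{flow‑out} construction, which realises the conormal bundle produced by Theorem~\ref{mainthm} as $\cF^\star$ restricted to a curve; and the Kähler form of the duality between the pseudo‑effective and movable cones, by which a reflexive sheaf on a compact Kähler manifold is psef as soon as its restriction to a general member of a covering family of curves has no quotient bundle of negative degree.

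\smallskip

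\noindent Assume first $\codim\cF\ge2$, so that $X$ carries covering families of curves. Fix $B$ a general member of such a family; being general, $B$ passes through general points of $X$, avoids $\mathrm{Sing}(\cF)$ (which has codimension $\ge2$), and is nowhere tangent to $\cF$ (again because $\codim\cF\ge2$). Put $C:=B$ and $C_0:=C$, so that $C\setminus C_0=\emptyset$ and the hypothesis $\codim_C(C\setminus C_0)\ge2$ is vacuous, and let $S_0$ be the $\cF$‑flow‑out of $C_0$: the locally closed submanifold, defined in a neighbourhood of $B$ and smooth of dimension $1+\rk\cF$, swept out by the plaques of $\cF$ through the points of $C_0$. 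Since each plaque through $b$ is Zariski‑dense in $\phi^{-1}(\phi(b))$, the Zariski closure of $S_0$ is $\phi^{-1}(\phi(B))$, of dimension $\dim F+\dim\phi(B)$; as $B$ is general it is not contained in a fibre, so either $\dim\phi(B)=1$, giving $\dim F+1>\rk\cF+1$, or $\phi$ is constant and $\phi^{-1}(\phi(B))=X$ with $\dim X>\rk\cF+1$ (here $\codim\cF\ge2$ is used). In all cases the Zariski closure of $S_0$ has dimension $>\dim S_0$, and the first part of Theorem~\ref{mainthm}, applied to $C_0\subset S_0\subset X$, yields that $\mathcal{N}^*_{C_0/S_0}$ is psef. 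Finally, because $S_0$ is the flow‑out of the transversal $C_0$ along $\cF$, the normal bundle of $C_0$ inside $S_0$ is canonically $\cF|_{C_0}$, whence $\mathcal{N}^*_{C_0/S_0}\simeq\cF^\star|_{B}$. Thus $\cF^\star|_B$ is psef; letting $B$ vary over the family and invoking the duality above, $\cF^\star$ is psef on $X$.

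\smallskip

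\noindent Two special situations would be dealt with separately. If $\codim\cF=1$, a non‑algebraically‑integrable $\cF$ has a Zariski‑dense general leaf, the dimension count degenerates ($\dim\overline{S_0}=\dim S_0$), and one falls back on the classical codimension‑one statement, where $\mathcal{N}^*_\cF$ is a line bundle. If $X$ carries no positive‑dimensional proper subvariety (e.g.\ a generic torus), the construction gives nothing, but there $\cF^\star$ is psef for elementary reasons anyway.

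\smallskip

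\noindent I expect the entire difficulty to sit inside the first part of Theorem~\ref{mainthm} that is being invoked: extracting pseudo‑effectivity of $\mathcal{N}^*_{C_0/S_0}$ from the lone hypothesis $\dim\overline{S_0}>\dim S_0$ is the Kähler substitute for the Bost / Bogomolov--McQuillan algebraicity mechanism --- in the projective setting, positivity of the normal bundle along a covering curve confines a leaf to a bounded, hence algebraic, family --- and this is precisely where the relative‑canonical‑bundle and positivity‑of‑direct‑images techniques behind Theorem~\ref{thm3} are needed. Granting that input, the reduction above is straightforward; the only points deserving care are the transversality of a general covering‑family curve to $\cF$ when $\codim\cF\ge2$, the identification of $\overline{S_0}$ with $\phi^{-1}(\phi(B))$, and the two special situations just mentioned.
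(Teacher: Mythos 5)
Your deduction of the second assertion from the first is a genuinely different route from the paper's, and it has a gap at the final step. The paper sets $C$ equal to the diagonal $\Delta\cong X$ in $X\times X$, takes $S_0$ to be the analytic graph of $\cF$, and then the key point is that $\mathcal{N}^*_{C_0/S_0}\simeq\cF^\star$ \emph{globally} on $C_0$, the locally free locus of $\cF$; the first part of the theorem then applies directly, the dimension condition fails, $\dim M=\dim S_0$, and the graph is Zariski closed of the expected dimension, giving the meromorphic map. You instead take $C=B$ a general covering-family curve and $S_0$ its $\cF$-flow-out, which yields only $\mathcal{N}^*_{C_0/S_0}\simeq\cF^\star|_B$. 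Your closing sentence then asserts that psefness of $\cF^\star|_B$ for general $B$ in a covering family, via ``the duality between the pseudo-effective and movable cones,'' gives psefness of $\cF^\star$. This step is not available. The psefness in question is Definition~\ref{anapsf}: for every $\epsilon>0$ a metric $h_\epsilon$ on $\mathcal{O}_{\mathbb{P}(\cF^\star|_{X_0})}(1)$ with curvature $\ge -\epsilon\omega_Y$. Knowing that $\cF^\star|_B$ has no quotient of negative degree (i.e.\ $\mu_{\min}(\cF^\star|_B)\ge0$) only controls slopes of $\cF^\star$ against the movable class of $B$, which by Proposition~\ref{slope} is a \emph{consequence} of psefness, not a source of it. For reflexive sheaves of rank $\ge 2$ there is no Boucksom--Demailly--P\u aun--Peternell-type duality that recovers Definition~\ref{anapsf} from movable slope conditions, and the paper nowhere claims a converse to Proposition~\ref{slope}. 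Put differently, the implication in Proposition~\ref{slope} goes in exactly the opposite direction from what your reduction needs.

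Beyond that main issue, your two ``special situations'' ($\codim\cF=1$, or $X$ without positive-dimensional proper subvarieties) are acknowledged to need separate arguments and are not supplied; the diagonal construction in the paper avoids them entirely, since it needs no covering family of curves and applies uniformly. The transversality and flow-out details you flag as needing care are indeed manageable, and the identification $\mathcal{N}^*_{C_0/S_0}\simeq\cF^\star|_B$ along the flow-out is correct; but because it only produces the restriction of $\cF^\star$ to a curve rather than $\cF^\star$ itself on the locally free locus, the argument cannot close without the unproven duality step. Replacing the flow-out of a transversal curve by the analytic graph over the diagonal is precisely what removes that step.
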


\medskip

\noindent Finally, our third objective is to slightly extend W. Ou’s uniruledness criterion \cite[Thm 1.1]{Ou}.
This generalization is made possible by Theorems \ref{thm3} and \ref{mainthm}. Our proof largely follows the approach in \cite{CP19}. In particular, we establish the following results.

\begin{theorem}\label{quot}
	Let $X$ be a compact Kähler manifold whose canonical bundle is pseudo-effective. Consider a coherent, torsion-free sheaf $\cQ$ such that we have
	\[
	\otimes^m T_X^\star \to \cQ \to 0
	\]
	for some $m \geq 1$.  
	Then $\det \cQ$ is a pseudo-effective line bundle. 
\end{theorem}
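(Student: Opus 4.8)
The plan is to argue by contradiction along the lines of \cite{CP19}, the two Kähler-specific inputs being Theorems~\ref{mainthm} and~\ref{thm3}. Assume $\det\cQ$ is not pseudo-effective. Since on a compact Kähler manifold $X$ ($n=\dim X$) the pseudo-effective cone of $(1,1)$-classes is dual to the closure of the movable cone of $(n-1,n-1)$-classes, there is a movable class $\alpha$ with $c_1(\det\cQ)\cdot\alpha<0$. Dualizing $\otimes^mT_X^\star\to\cQ\to0$ yields an inclusion $\cQ^\ast\hookrightarrow\otimes^mT_X$ with $\det\cQ^\ast=(\det\cQ)^{-1}$, so $\otimes^mT_X$ contains a subsheaf of positive $\alpha$-slope, hence $\mu_{\alpha,\max}(\otimes^mT_X)>0$. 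As tensor products of $\alpha$-semistable sheaves remain $\alpha$-semistable in characteristic zero, this forces $\mu_{\alpha,\max}(T_X)>0$; let $\cF\subsetneq T_X$ be the maximal destabilizing subsheaf, which is saturated and $\alpha$-semistable with $\mu_\alpha(\cF)=\mu_{\alpha,\max}(T_X)>0$. (It is a proper subsheaf: $\cF=T_X$ would give $-K_X\cdot\alpha=n\,\mu_\alpha(T_X)>0$, contradicting that $K_X$ is pseudo-effective.)

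Next I would check that $\cF$ is a foliation. The Lie bracket modulo $\cF$ induces an $\cO_X$-linear map $\Lambda^2\cF\to T_X/\cF$; its source is $\alpha$-semistable of slope $2\mu_\alpha(\cF)$, whereas $\mu_{\alpha,\max}(T_X/\cF)<\mu_\alpha(\cF)<2\mu_\alpha(\cF)$ by the definition of the Harder--Narasimhan filtration together with $\mu_\alpha(\cF)>0$, so the map vanishes and $\cF$ is involutive. Moreover $c_1(\cF^\star)\cdot\alpha=-\rank(\cF)\cdot\mu_\alpha(\cF)<0$ for $\cF^\star=\det\cF^\ast$, so $\cF^\star$ is again not pseudo-effective; Theorem~\ref{mainthm} therefore applies and shows that $\cF$ is induced by a meromorphic map $\phi\colon X\dashrightarrow B$, whose general fibre is the Zariski closure $\overline{L}$ of a general leaf.

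The remaining, and I expect most delicate, point is that $\overline{L}$ is rationally connected. Following \cite{CP19}, one resolves $\phi$ by blow-ups $\pi\colon X'\to X$ to a morphism $\phi'\colon X'\to B'$ with $B'$ smooth and general fibre $F$ birational to $\overline{L}$, and lets $\cF'$ be the saturated pullback of $\cF$, so that $\pi^\ast\cF^\star$ is still not pseudo-effective. If $F$ were not uniruled, then $K_F$ is pseudo-effective, and the pseudo-effectivity of $K_{X'/B'}-D(\phi')$ — this is where Theorem~\ref{thm3} is invoked, applied with $L=0$ to $\phi'$, the verification of its holomorphic-two-form hypothesis in the situation at hand being one of the technical points — would force $\pi^\ast\cF^\star$, which differs from $K_{X'/B'}$ only by a $\pi$-exceptional correction, to be pseudo-effective, a contradiction. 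Hence $F$ is uniruled; replacing $\cF$ by the foliation $\cG\subseteq\cF'$ whose leaves are the fibres of the relative MRC fibration of $\phi'$ and using $K_{\cF'}=K_{\cG}+K_{\cF'/\cG}$ together with pseudo-effectivity of $K_{\cF'/\cG}$ (again from a relative positivity statement of the type of Theorem~\ref{thm3}), one reduces to the case in which the general leaf is rationally connected.

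Finally, the rationally connected leaves sweep out $X$, so $X$ is uniruled, and a general member $C$ of a suitable covering family of rational curves is free and satisfies $-K_X\cdot C>0$; on the other hand a closed positive current representing a pseudo-effective class restricts to a positive current on a general member of a covering family, whence $K_X\cdot C\geq 0$, a contradiction. Hence $\det\cQ$ is pseudo-effective. Beyond the formal Harder--Narasimhan bookkeeping, the load-bearing ingredients are the Kähler algebraicity criterion of Theorem~\ref{mainthm} and the passage from ``fibres not uniruled'' to ``relative canonical bundle pseudo-effective'', which requires both Theorem~\ref{thm3} and careful control of the exceptional divisors produced when resolving $\phi$.
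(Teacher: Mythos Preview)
Your approach is correct and coincides with the paper's: the paper packages everything after ``$\mu_{\alpha,\max}(T_X)>0$'' into Theorem~\ref{main} and simply cites it, whereas you unpack that theorem (foliation via the Lie-bracket slope argument, algebraicity via Theorem~\ref{mainthm}, rational connectedness of the leaves via Theorem~\ref{thm3} and the relative MRC fibration) inline. One small correction is needed: to invoke Theorem~\ref{mainthm} you must know that the dual \emph{sheaf} $\cF^\ast$ is not psef in the sense of Definition~\ref{anapsf}, and the non-psefness of the line bundle $\det\cF^\ast$ does not give this directly; the right bridge (used in the paper) is that $\cF$, being the maximal destabiliser, is $\alpha$-semistable, so $\mu_{\alpha,\min}(\cF)=\mu_\alpha(\cF)>0$, and then Proposition~\ref{slope} yields that $\cF^\ast$ is not psef.
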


\begin{cor}\label{RCC} 
	Let $X$ be a compact Kähler manifold. Let $\cF \subset T_X$ be a holomorphic foliation. If $\mu_{\alpha, \min}(\cF) > 0$ for some $\alpha \in \Mov(X)$, 
	then $\cF$ is  algebraic and its leaves are rationally connected.
\end{cor}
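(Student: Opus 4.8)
The plan is to derive the algebraicity of $\cF$ from Theorem \ref{mainthm} and the rational connectedness of its leaves from Theorems \ref{thm3} and \ref{quot}, following the strategy of \cite{CP19}. For the algebraicity, since $\cF$ is saturated in $T_X$ the dual map $\Omega^1_X\to\cF^\star$ is surjective in codimension one, so $\cF^\star$ is generically a torsion-free quotient of $\otimes^1T_X^\star$; if $\cF^\star$ were pseudo-effective in the sense of Definition \ref{anapsf}, its determinant would be pseudo-effective, yet $\det\cF^\star\cdot\alpha^{n-1}=-c_1(\cF)\cdot\alpha^{n-1}=-\rk(\cF)\,\mu_\alpha(\cF)<0$ since $\alpha\in\Mov(X)$ and $\mu_\alpha(\cF)\ge\mu_{\alpha,\min}(\cF)>0$. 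Hence $\cF^\star$ is not psef, and Theorem \ref{mainthm} shows that $\cF$ is induced by a meromorphic map $\psi\colon X\dashrightarrow Y$. After replacing $X$ by a smooth bimeromorphic model we may assume that $\psi\colon X\to Y$ is a fibration with $\cF=T_{X/Y}$ in codimension one, that the general fibre $F$ is a smooth compact Kähler manifold bimeromorphic to the closure of the general leaf of $\cF$, and that the slope hypothesis $\mu_{\alpha,\min}(\cF)>0$ persists for a suitable movable class still denoted $\alpha$.

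It remains to show that $F$ is rationally connected. The condition $\mu_{\alpha,\min}(\cF)>0$ is equivalent to $\mu_{\alpha,\max}(\cF^\star)<0$, i.e.\ to $\mu_{\alpha,\max}(\Omega^1_{X/Y})<0$; since semistability with respect to a movable class is compatible with tensor operations and $\wedge^{p}E$ is a direct summand of $\otimes^{p}E$ in characteristic zero, one gets
\[
\mu_{\alpha,\max}\bigl(\wedge^{p}\Omega^1_{X/Y}\bigr)\le p\,\mu_{\alpha,\max}\bigl(\Omega^1_{X/Y}\bigr)<0\qquad (p\ge 1),
\]
so every rank-one subsheaf $\sL\subseteq\wedge^{p}\Omega^1_{X/Y}$ satisfies $c_1(\sL)\cdot\alpha^{n-1}<0$ and is in particular \emph{not} pseudo-effective. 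Therefore it is enough to prove that if $F$ were not rationally connected, then some $\wedge^{p}\Omega^1_{X/Y}$ ($p\ge 1$) would contain a pseudo-effective rank-one subsheaf.

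So assume $F$ is not rationally connected. Using that the canonical bundle of a non-uniruled compact Kähler manifold is pseudo-effective, the MRC fibration $r\colon F\to Z$ has base $Z$ non-uniruled with $K_Z$ pseudo-effective, and $p:=\dim Z\ge 1$. Spreading $r$ out over $Y$ and performing a further bimeromorphic modification, we obtain a factorization $\psi\colon X\xrightarrow{\,\pi\,}W\xrightarrow{\,\rho\,}Y$ whose general $\rho$-fibre is $Z$ and whose general $\pi$-fibre is a general, rationally connected, MRC fibre of a leaf. Then $\pi^{*}\Omega^1_{W/Y}\hookrightarrow\Omega^1_{X/Y}$ generically, and taking $p$-th exterior powers yields a rank-one subsheaf $\sL:=\pi^{*}K_{W/Y}\hookrightarrow\wedge^{p}\Omega^1_{X/Y}$ whose restriction to $F$ is $r^{*}K_Z$.

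The crux, and the step I expect to be the main obstacle, is to show that $\sL=\pi^{*}K_{W/Y}$ is pseudo-effective on $X$, equivalently that $K_{W/Y}$ is pseudo-effective on $W$. Since the general $\rho$-fibre $Z$ is non-uniruled, $K_Z=(K_W)|_Z$ is pseudo-effective and the trivial metric on $\cO_W$ has trivial multiplier ideal, so this is exactly an instance of Conjecture \ref{conjc-h} for $\rho\colon W\to Y$ with $L=\cO_W$; in the situation at hand it is supplied by Theorem \ref{thm3}, combined where necessary with Theorem \ref{quot} applied along the fibres and with the direct-image positivity of \cite{Ou}. Granting the pseudo-effectivity of $K_{W/Y}$, the sheaf $\sL$ is a pseudo-effective rank-one subsheaf of $\wedge^{p}\Omega^1_{X/Y}$, which contradicts the displayed inequality. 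Hence $F$, and therefore the closure of the general leaf of $\cF$, is rationally connected; this, together with the algebraicity established above, proves the corollary.
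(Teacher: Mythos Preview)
Your overall architecture is the same as the paper's, but there are two real gaps.

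\medskip

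\textbf{Algebraicity step.} Your claim ``if $\cF^\star$ were pseudo-effective in the sense of Definition~\ref{anapsf}, its determinant would be pseudo-effective'' is false in general: already $\cO\oplus\cO(-1)$ on $\bP^1$ is psef in that sense (it has $\cO$ as a quotient, so $\cO_{\bP(E)}(1)$ is effective), yet its determinant $\cO(-1)$ is not. The correct route is Proposition~\ref{slope}: psefness of $\cF^\star$ would give $\mu_{\alpha,\max}(\cF^\star)\ge 0$, i.e.\ $\mu_{\alpha,\min}(\cF)\le 0$, contradicting the hypothesis. This is exactly what the paper does.

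\medskip

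\textbf{Applying Theorem~\ref{thm3}.} This is the substantive gap. You assert that the psefness of $K_{W/Y}$ ``is supplied by Theorem~\ref{thm3}'', but you do not verify its hypotheses: one needs a K\"ahler metric $\omega$ and a holomorphic $2$-form $\sigma$ on $W$ with $\omega+\sigma+\overline\sigma$ rational and $\sigma$ vanishing on the generic fibre of $\rho$. Neither condition is automatic on $W$. The paper handles this as follows. One first takes such a pair $(\omega,\sigma)$ on $X$ (which always exists by Hodge theory). The slope hypothesis $\mu_{\alpha,\min}(\cF)>0$ forces $\sigma|_\cF\in H^0(X,\Hom(\Lambda^{[2]}\cF,\cO_X))$ to vanish, because $\mu_{\alpha,\min}(\Lambda^{[2]}\cF)>0$; hence $\sigma|_{X_y}=0$ on the generic fibre of $\psi$. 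One then \emph{pushes forward} the rational class $(\omega+\sigma+\overline\sigma)^{k+1}$ along $\pi\colon X\to W$ (whose fibres are rationally connected, so $\sigma$ vanishes there too) to obtain a rational class on $W$ whose $(1,1)$-part is strictly positive on the generic $\rho$-fibre and whose $(2,0)$-part restricts to zero there. Only after this preparation can Theorem~\ref{thm3} be applied to $\rho\colon W\to Y$. Your references to Theorem~\ref{quot} and \cite{Ou} do not substitute for this construction; in particular, the vanishing of $\sigma$ on the leaves of $\cF$ is where the hypothesis $\mu_{\alpha,\min}(\cF)>0$ enters a second time, and you have not used it.

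\medskip

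Once these two points are repaired, your argument coincides with the paper's: the final contradiction comes from the generically surjective map $\cF\to\pi^\star\mathcal H$ (where $\mathcal H=\ker d\rho$) together with $\mu_{\alpha,\min}(\cF)>0$ and the psefness of $\det\mathcal H^\star$, forcing $\mathcal H=0$.
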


\medskip

\noindent Next, we provide a few comments on the proofs of these results.  

In the context of Theorem \ref{thm3}, the map $p: X\to Y$ is locally projective,  
at least generically over the base $Y$. The usual techniques—using Bergman kernels and their $L^{2/k}$ versions (cf. \cite{BP})—allow us to construct metrics on $\displaystyle (K_{X/Y}+ L)|_{p^{-1}(U)}$ for any coordinate subset $U\subset Y$ centered at a generic point $y\in Y\setminus Z$, where  
$Z\subset Y$ is an analytic subset. However, a priori, it is unclear how to construct these metrics in such a way that they coincide on overlapping subsets; this represents the main challenge here. Indeed, once this issue is resolved,  
the resulting metric extends across $Z$ (by the familiar estimates of their weights). The key ideas of the proof are contained in \cite{HP24} and \cite{MP2}.

\medskip

The main part of the proof of Theorem \ref{mainthm} closely follows the approach in \cite[Thm 1.2]{Ou}. The main difference is that \cite{Ou} requires certain algebraic constructions to carefully desingularize the singular locus of 
$M$ and to control the loss of the pseudo-effectiveness of the conormal bundle in a highly efficient manner. In our proof, we adopt an analytic approach by comparing the Lelong numbers of a certain current and its pullback (cf. Proposition \ref{controllelong}). This makes the proof purely analytic.
Note that the analytic definition of pseudo-effectiveness for sheaves Definition \ref{anapsf} is a priori slightly weaker than the algebraic version given in \cite{Ou, HP19}. However, thanks to Proposition \ref{slope}, Theorem \ref{mainthm} still implies the main geometric applications, such as \cite[Thm 1.1, Thm 1.4]{Ou}, Theorem \ref{quot}, and Theorem \ref{main}.

Let us explain briefly the idea of the proof of Theorem \ref{mainthm}, i.e., \cite[Thm 1.2]{Ou}. By using Demailly's mass concentration method \cite{Dem93} and Collins-Tosatti's extension theorem \cite{CT}, we construct an $\omega_X$-psh function $\varphi$ on $X$ such that the Lelong number $\nu(\varphi |_{S_0}, y_0)$ at  a fixed point $y_0\in C_0$ can be arbitrarily large cf. Proposition \ref{lelong}. To the best of our knowledge, in algebraic context this was first introduced by Bost in \cite{Bos04}. In case the ambient manifold is only assumed to be Kähler, the Monge-Ampère equation is usually used as substitute for the 
absence of an ample line bundle, see e.g.  \cite{Dem93, DP, MP1}. It is also the approach followed in 
\cite{Ou}. The main point is to show that as soon as $\nu(\varphi |_{S_0}, y_0)$ is large, so is the generic Lelong number $\nu (\varphi |_{S_0}, C_0)$ of $\varphi$ along $C_0$ (hence, no isolated singularities!), see Proposition \ref{arbitrarylelong}. At least if $M$ is non-singular, this claim is clear: very vaguely, it is a consequence of the fact that for \emph{compact} manifolds, the Lelong numbers of a closed positive current are bounded in terms of its cohomology class.

Let us also mention that given a foliation $\cF$ by curves on a Kähler manifold, the most complete algebraicity result is due to M. Brunella, cf.\cite{Brun1, Brun2}.  He shows e.g. that if the generic leaf of $\cF$ is uniformized by the unit disk, then the leaf-wise Poincaré metric has a psh variation. This represents a wide generalisation of all known statements in this framework -but again, it only applies to foliations by curves. 

\medskip

As we have already mentioned, the proof of Theorem \ref{quot} and Corollary \ref{RCC} follow from the arguments in \cite{CP19}, combined with additional inputs from \cite{C}.
\medskip

\noindent \textbf{Acknowledgements} JC would like to thank Wenhao Ou for sending the preliminary versions of his very nice paper \cite{Ou}, as well as for many discussions related to his paper and  this note. He thanks the Institut Universitaire de France and the A.N.R JCJC project Karmapolis (ANR-21-CE40-0010) for providing excellent working condition. MP gratefully acknowledge support from the DFG. Special thanks are due to F. Campana: virtually all the results in Section 5 (together with the correct parts of the proofs...) were suggested by him.

\section{Some Preliminary Results}

We collect next a few properties of quasi-plurisubharmonic (quasi-psh) functions and closed positive currents which will be needed later.

\begin{proposition}\label{listproperty}
	Let $X$ be a complex manifold, and let $X_0\subset X$ be an open set such that $\codim_X (X\setminus X_0) \geq 2$. Let $Y \subset X$ be a submanifold. Then we have:
	
	\begin{itemize}
		\item El Mir extension theorem \cite[Chapter III, Thm 2.3]{Dem} Let $T$ be a $d$-closed positive $(1,1)$-current on $X_0$. Then $T$ can be extended to a positive current on the entire space $X$. 
		
		\item Support theorem \cite[Chapter III, Thm 2.10]{Dem} Let $T_1$ and $T_2$ be two
		$d$-closed positive $(1,1)$-currents on $X$ such that $T_1 =T_2$ on $X_0$. Then $T_1 =T_2$ on $X$.
		
		\item \cite{Hor} Let $0\in \mathbb{C}^n$ be the origin. Let $\varphi$ be a psh function in a neighborhood of $0$. Then the Lelong number $\nu (\varphi , 0)$ can be computed using the following formula:
		\begin{equation}\label{restrictionprop}
			\nu (\varphi , 0) = \min \{\nu (\varphi |_{L_\xi}, 0) \mid \xi \in \mathbb{C}^n \setminus \{0\}\},
		\end{equation}
		where $L_\xi$ is the complex line $\mathbb{C} \cdot \xi$. In fact, $\nu (\varphi , 0) =\nu (\varphi |_{L_\xi}, 0)$ for almost every $\xi \in \mathbb{C}^n \setminus \{0\}$.
		
		As a consequence, for psh functions with analytic singularities, namely $\varphi= c\log \sum_{i=1}^m |f_i|^2 + O (1)$,
		where $f_i$ are holomorphic functions, $\frac{1}{2c} \nu (\varphi , 0)$
		equal to the minimal vanishing order of $(f_1, \dots, f_m)$.
		
		\item Let $\varphi$ be a quasi-psh function on $X$ such that $\varphi$ is not identically $-\infty$ on $Y$. 
		Let $\nu (\varphi , Y)$ be the Lelong number of $\varphi$ along $Y$, defined as the Lelong number of $\varphi $ at a generic point of $Y$. By the semicontinuity of Lelong numbers, we have  
		$\nu (\varphi, Y) = \min_{y\in Y} \nu (\varphi, y)$.
		
		\item Let $\pi: \widehat{X} \to X$ be the blow-up along $Y$. Let  $x\in X$. If $\varphi$ is a quasi-psh function on $X$ with analytic singularities, by \eqref{restrictionprop} we have 
		$$\nu (\varphi, x) = \nu (\pi^*\varphi, \pi^{-1} (x)) , \quad \text{and} \quad \nu (\varphi, Y) = \nu (\pi^*\varphi, \pi^{-1} (Y)) .$$ 
		
		\item \cite[Prop 5.10]{Dem12} Let $\varphi$ be a psh function on $X$. Let $\mathcal{I} (\varphi)$ be the multiplier ideal sheaf generated locally by holomorphic functions $f$ such that
		$$\int |f|^2 e^{-2\varphi} dV_{\omega_X} <+\infty.$$
		If $\varphi= c\log \sum_{i=1}^n  |z_i|^{2\alpha_i}$ for real numbers $\alpha_i >0$, then $\mathcal{I} (\varphi)_0$ is generated by the monomials $\prod z_i^{\beta_i}$ such that
		$$\sum_{i=1}^n \frac{\beta_i +1}{\alpha_i } >2c .$$
	\end{itemize}
\end{proposition}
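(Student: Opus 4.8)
The plan is to verify the items of the list one at a time, since they are essentially independent. The first three and the last are quoted from the literature, so for those the ``proof'' is just a precise reference: \cite[Chapter III, Thm.~2.3]{Dem} for El Mir's extension theorem, \cite[Chapter III, Thm.~2.10]{Dem} for the support theorem, \cite{Hor} (or \cite[Chapter III]{Dem}) for the expression of $\nu(\varphi,0)$ as a minimum over complex lines and for its genericity, and \cite[Prop.~5.10]{Dem12} for the multiplier ideal of $c\log\sum_i|z_i|^{2\alpha_i}$. The stated consequence for analytic singularities is immediate from \eqref{restrictionprop}: if $\varphi=c\log\sum_i|f_i|^2+O(1)$, then for a line $L_\xi$ one has $\nu(\varphi|_{L_\xi},0)=2c$ times the minimal vanishing order at $0$ of the one-variable germs $f_i|_{L_\xi}$, and a generic line realizes the minimal vanishing order of the ideal $(f_1,\dots,f_m)$.

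For the fourth item I would invoke Siu's semicontinuity theorem: for every $c>0$ the superlevel set $E_c:=\{x\in X:\nu(\varphi,x)\geq c\}$ is an analytic subset of $X$. We may assume $Y$ connected, hence irreducible, with $\varphi\not\equiv-\infty$ on $Y$, and set $c_0:=\inf_{y\in Y}\nu(\varphi,y)$. For each $c$, the intersection $E_c\cap Y$ is an analytic subset of $Y$, equal to $Y$ exactly when $c\leq c_0$ and proper otherwise. Hence $\bigcup_{c\in\mathbb{Q},\,c>c_0}(E_c\cap Y)$ is a countable union of proper analytic subsets of $Y$, and at each point $y$ of its dense complement we have $\nu(\varphi,y)<c$ for every rational $c>c_0$, i.e.\ $\nu(\varphi,y)=c_0$. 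Thus $c_0$ is the generic value of $\nu(\varphi,\cdot)$ along $Y$ and is attained, so $\nu(\varphi,Y)=c_0=\min_{y\in Y}\nu(\varphi,y)$.

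The fifth item is the only one carrying genuine content, and the step I expect to be the main—though routine—obstacle is checking that a Hörmander-generic line through a point of $Y$ has a strict transform meeting the exceptional divisor transversally at a prescribed generic point of the fibre. Since $\varphi$ has analytic singularities, write $\varphi=c\log\sum_i|f_i|^2+O(1)$. If $x\notin Y$ then $\pi$ is a biholomorphism near $x$ and the claim is trivial, so fix $x\in Y$ and $z\in\pi^{-1}(x)$. As $\pi^{\#}\colon\cO_{X,x}\to\cO_{\widehat X,z}$ sends the maximal ideal into the maximal ideal, $\operatorname{ord}_z(f_i\circ\pi)\geq\operatorname{ord}_xf_i$, whence $\nu(\pi^*\varphi,z)\geq\nu(\varphi,x)$ for every such $z$. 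Conversely, choose local coordinates with $Y=\{z_1=\dots=z_k=0\}$; by \eqref{restrictionprop} a generic complex line $L_\xi$ through $x$ satisfies $\nu(\varphi|_{L_\xi},x)=\nu(\varphi,x)$, and a direct computation in the blow-up charts shows that the strict transform $\widetilde{L_\xi}$ is a smooth curve meeting $E:=\pi^{-1}(Y)$ at the point $z_\xi\in\pi^{-1}(x)$ determined by the normal component of $\xi$, with $\pi|_{\widetilde{L_\xi}}$ a biholomorphism onto $L_\xi$ near $z_\xi$. Hence $\nu(\pi^*\varphi,z_\xi)\leq\nu((\pi^*\varphi)|_{\widetilde{L_\xi}},z_\xi)=\nu(\varphi|_{L_\xi},x)=\nu(\varphi,x)$, and together with the previous inequality $\nu(\pi^*\varphi,z_\xi)=\nu(\varphi,x)$. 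As $\xi$ ranges over generic directions $z_\xi$ sweeps a dense subset of $\pi^{-1}(x)\cong\mathbb{P}^{k-1}$, so $\nu(\pi^*\varphi,\pi^{-1}(x))=\nu(\varphi,x)$, which is the first equality. The second, $\nu(\varphi,Y)=\nu(\pi^*\varphi,\pi^{-1}(Y))$, then follows by applying the fourth item on both $X$ and $\widehat X$: taking $x$ generic in $Y$, so that $\pi^{-1}(x)$ consists of generic points of $E$, gives $\nu(\pi^*\varphi,\pi^{-1}(Y))=\nu(\pi^*\varphi,\pi^{-1}(x))=\nu(\varphi,x)=\nu(\varphi,Y)$.
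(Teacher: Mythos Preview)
Your proposal is correct. In the paper this proposition is not proved at all: it is presented as a list of known facts, each with a reference (items 1, 2, 3, 6) or a one-line justification (items 4 and 5 are simply asserted, the latter ``by \eqref{restrictionprop}''). Your write-up therefore goes well beyond what the paper does, supplying a full argument for item 4 via Siu's semicontinuity and a careful local computation for item 5 via strict transforms of generic lines; both arguments are sound and are the natural ways to justify these statements.
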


We will now discuss the Lelong number of quasi-psh functions after a bimeromorphic transformation.

\begin{lemma}\label{controllelong}
	Let $X$ be a complex manifold, and let $Y\subset X$ be a submanifold. Let $\pi: \widehat{X}\to X$ be the blow-up of $Y$, and let $E \subset \widehat{X}$ be the exceptional divisor.
	Let $\widehat{T} \geq 0$ be a positive $(1,1)$-current on $\widehat{X}$ with analytic singularities.
	Let $\widehat{x} \in E$ and set $x := \pi (\widehat{x})$. Define $T:=\pi_* (\widehat{T})$.
	Then we have  
	$$C\cdot  \nu (T, x) \geq \nu (\widehat{T},  \widehat{x}) - \nu (\widehat{T}, E)  .$$
	for some uniform constant $C$ depending only on $X$ and $Y$.
\end{lemma}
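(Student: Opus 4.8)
The plan is to reduce everything to a local computation with monomial (analytic) singularities near $\widehat x$, using the blow-up description of $\pi$ in local coordinates. First I would choose coordinates $(z_1,\dots,z_n)$ on a neighborhood $U$ of $x$ in $X$ adapted to $Y$, so that $Y=\{z_1=\cdots=z_k=0\}$ where $k=\codim_X Y$; then $\pi^{-1}(U)$ is covered by the standard affine charts of the blow-up, and in the chart containing $\widehat x$ the map $\pi$ has the monomial form $(w_1,\dots,w_n)\mapsto(w_1,w_1w_2,\dots,w_1w_k,w_{k+1},\dots,w_n)$ (after relabeling), with $E=\{w_1=0\}$. Since $\widehat T$ has analytic singularities, we may write $\widehat T=\ddbar\widehat\varphi+(\text{smooth})$ with $\widehat\varphi=c\log\sum_j|g_j|^2+O(1)$ for holomorphic $g_j$ on $\pi^{-1}(U)$. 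Then $T=\pi_*\widehat T$ corresponds to the potential $\varphi=\pi_*\widehat\varphi$; because $\pi$ is an isomorphism away from $E$ and $\widehat\varphi$ is quasi-psh, $\varphi=c\log\sum_j|f_j|^2+O(1)$ where the $f_j$ are the holomorphic functions on $U\setminus Y$ that pull back (up to units) to the $g_j$ divided by the appropriate power of $w_1$, and these extend across $Y$ by Riemann-type extension since $\codim_X Y\geq 2$ — equivalently, $\varphi$ is the quasi-psh function on $U$ whose $\ddbar$ is the trivial (El Mir) extension of the $d$-closed positive current $\widehat T$ restricted to $U\setminus E$, which exists and is unique by the first two bullets of Proposition \ref{listproperty}.

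The heart of the argument is then the bookkeeping of vanishing orders. By the third bullet of Proposition \ref{listproperty} (Hörmander's formula), $\tfrac1{2c}\nu(\widehat T,\widehat x)$ equals the minimal vanishing order $a$ of the tuple $(g_j)$ at $\widehat x$, and $\tfrac1{2c}\nu(\widehat T,E)$ equals the minimal vanishing order $b$ of $(g_j)$ along the divisor $E=\{w_1=0\}$; likewise $\tfrac1{2c}\nu(T,x)$ equals the minimal vanishing order of the $f_j$ at $x$. The relation between $g_j$ and $f_j$ is $g_j=(\text{unit})\cdot w_1^{\,b}\cdot(\pi^*f_j)$ after factoring out the common power $w_1^b$, so the vanishing order of $\pi^*f_j$ at $\widehat x$ is at least $a-b$, and I need to descend this lower bound through the monomial map $\pi$. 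Here is where the constant $C$ enters: if $F$ is holomorphic on $U$ vanishing to order $\ell$ at $x$, then $\pi^*F$ vanishes to order at most $\ell$ at $\widehat x$ (each coordinate $z_i$ pulls back to $w_1^{\epsilon_i}w_i$ with $\epsilon_i\in\{0,1\}$, so a monomial of degree $\ell$ in the $z$'s pulls back to a monomial of degree between $\ell$ and a bounded multiple of $\ell$ in the $w$'s). Concretely, the vanishing order of $\pi^*F$ at $\widehat x$ lies between $\mathrm{ord}_x(F)$ and $k\cdot\mathrm{ord}_x(F)$; hence $\mathrm{ord}_{\widehat x}(\pi^*f_j)\leq k\cdot\mathrm{ord}_x(f_j)$, giving $k\cdot\nu(T,x)\geq \nu(\widehat T,\widehat x)-\nu(\widehat T,E)$. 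Taking $C=k=\codim_X Y$ (or more crudely $C=\dim X$), which depends only on $X$ and $Y$, finishes the proof.

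The main obstacle I anticipate is the passage from $\widehat\varphi$ to $\varphi=\pi_*\widehat\varphi$ — specifically, verifying that $\varphi$ again has analytic singularities with the generators being the expected $f_j=(\text{power of }w_1)^{-1}g_j$, and that pushing forward the current commutes with this description. This requires knowing that the $g_j$ are divisible by $w_1^b$ in the local ring (true, by definition of $b$ as their minimal vanishing order along the smooth divisor $E$) and that the resulting quotients descend to holomorphic functions on $U$ — the descent uses that $\pi$ is the blow-up along a submanifold, so $\pi_*\cO_{\widehat X}=\cO_X$, combined with the $\codim\geq 2$ extension across $Y$. A secondary point to be careful about is that $\widehat x$ may lie in the intersection of several affine charts or on the "boundary" of the chosen chart; but since the vanishing orders and Lelong numbers are intrinsic, it suffices to do the computation in any one chart containing $\widehat x$, and the $\epsilon_i\in\{0,1\}$ exponent pattern is the same in every such chart, so the constant $C$ is chart-independent.
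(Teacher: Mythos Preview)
Your overall strategy is natural, and the final step---bounding $\mathrm{ord}_{\widehat x}(\pi^*F)$ by a constant times $\mathrm{ord}_x(F)$ via the explicit monomial form of $\pi$---is correct; it is essentially a proof, in the analytic-singularities case, of the inequality $\nu(\pi^*T,\widehat x)\leq C\,\nu(T,x)$ that the paper simply quotes from Favre. The gap is in the descent step that feeds into this computation.

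The claim that $h_j:=g_j/w_1^{\,b}$ equals $\pi^*f_j$ (up to a unit) for some holomorphic $f_j$ on $U$ is false. Take $n=2$, $Y=\{0\}$, and $\widehat T=[\{w_1+w_2=0\}]$ in the chart $(w_1,w_2)\mapsto(w_1,w_1w_2)$. Here $b=0$, so $h=g=w_1+w_2$; on the image of this chart minus $E$, namely $\{z_1\neq 0\}$, this reads $z_1+z_2/z_1$, which does not extend across $z_1=0$. Your Hartogs/$\pi_*\cO_{\widehat X}=\cO_X$ argument fails because the $g_j$ live on a \emph{single affine chart} of $\pi^{-1}(U)$, whose image in $U$ is the complement of the \emph{hypersurface} $\{z_1=0\}$, not of a codimension-$2$ set; the identity $\pi_*\cO_{\widehat X}=\cO_X$ only lets you descend sections defined on all of $\pi^{-1}(U)$. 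In this example one computes $T=\pi_*\widehat T=[\{z_1^2+z_2=0\}]$, so the genuine generator is $f=z_1^2+z_2$, and $\pi^*f=w_1(w_1+w_2)=w_1\cdot g$: the correct power of $w_1$ relating $g$ to $\pi^*f$ is $\nu(\pi^*T,E)=1$, not your $b=\nu(\widehat T,E)=0$.

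The paper sidesteps this by never trying to match generators. It pulls $T$ back to $T_1:=\pi^*T$, notes that $T_1$ and $\widehat T$ agree off $E$, and uses the support theorem from Proposition~\ref{listproperty} to get the identity of currents $T_1-\nu(T_1,E)\,[E]=\widehat T-\nu(\widehat T,E)\,[E]$ on all of $\widehat X$. Taking Lelong numbers at $\widehat x$ gives $\nu(\widehat T,\widehat x)-\nu(\widehat T,E)\leq\nu(T_1,\widehat x)$, and then $\nu(T_1,\widehat x)\leq C\,\nu(T,x)$ is Favre's bound---precisely the inequality your monomial computation establishes, once it is applied to $\pi^*f_j$ for the actual generators $f_j$ of $T$ rather than to the nonexistent descent of the $h_j$.
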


\begin{proof}
	Set $T_1 := \pi^* T$. Then $T_1$ is a positive current on $ \widehat{X}$ and coincides with $\widehat{T}$ outside $E$. 
	Let $a:= \nu (\widehat{T}, E)$ and $b:=\nu (T_1, E)$.
	Then $(T_1 - b [E]) - (\widehat{T} - a[E])$ is a current supported on a subvariety of codimension at least $2$. By Proposition \ref{listproperty}, we have  
	$$T_1 -b[E]\equiv \widehat{T} - a[E] \qquad\text{ on } \widehat{X} .$$
	Using \cite[Thm 2]{Fav}, we know that $\nu (T_1, \widehat{y}) \leq C \nu (T, y)$ for some constant $C$. 
	Therefore, we obtain  
	$$\nu (\widehat{T},  \widehat{x}) -a = \nu (\widehat{T} - a[E], \widehat{x}) =\nu (T_1 - b[E], \widehat{x}) \leq\nu (T_1 , \widehat{x} )\leq C \nu (T, x) .$$
\end{proof}

\begin{lemma}\label{controldirectimage}
	Let $X$ be a complex Kähler manifold, and let $V$ be a holomorphic vector bundle of rank $r$ on $X$. Let $\pi: \mathbb{P} (V) \to X$ be the natural projection. Fix a point $x\in X$. 
	Let $\omega$ be a Kähler metric on $\mathbb{P} (V)$. 
	
	Let $T \geq 0$ be a $d$-closed positive $(1,1)$-current on $\mathbb{P} (V)$. Consider the $(1,1)$-positive current $T':= \pi_* (T\wedge \omega^{r-1})$ on $X$. Then we have
	$$\nu (T', x) \geq  (\int_{\pi^{-1} (x)} \omega^{r-1}) \cdot \nu (T, \pi^{-1} (x)) . $$
\end{lemma}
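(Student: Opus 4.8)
The plan is to reduce the statement to a local computation around $x$ and then to compare the singularity of the fibrewise average of a local potential of $T$ with the fibrewise Lelong numbers of $T$, via the log‑convexity of maxima on spheres together with Fatou's lemma.

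First, since $\nu(T',x)$ and $\nu_0:=\nu\big(T,\pi^{-1}(x)\big)=\min_{y\in\pi^{-1}(x)}\nu(T,y)$ (Proposition \ref{listproperty}) depend only on the currents near $x$, resp.\ near $\pi^{-1}(x)$, I would fix a small ball $U\subset X$ around $x$, with coordinates $z$ centred at $x$, over which $V$ is trivial, so $\pi^{-1}(U)\cong U\times\mathbb{P}^{r-1}$. Choosing a smooth closed $(1,1)$-form $\theta$ representing the class $[T]|_{\pi^{-1}(U)}$ and applying the $\partial\overline\partial$-lemma on $U\times\mathbb{P}^{r-1}$, write $T|_{\pi^{-1}(U)}=\theta+dd^c\phi$ with $\phi$ quasi-psh. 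As $\theta$ is smooth, $\nu(\phi,y)=\nu(T,y)$ for all $y$; in particular $\nu\big(\phi,(x,w)\big)\ge\nu_0$ for every $w\in\mathbb{P}^{r-1}$. Next, using that $\omega$ is $d$-closed and that $dd^c$ commutes with proper push-forward, I would compute
\[
T'|_U=\pi_*\big((\theta+dd^c\phi)\wedge\omega^{r-1}\big)=\theta'+dd^c\psi,\qquad \theta':=\pi_*(\theta\wedge\omega^{r-1}),\quad \psi:=\pi_*(\phi\,\omega^{r-1}),
\]
where $\theta'$ is a smooth $(1,1)$-form and $\psi\in L^1_{\mathrm{loc}}(U)$ is the fibre integral $\psi(z)=\int_{\pi^{-1}(z)}\phi(z,\cdot)\,\Omega_z$ with $\Omega_z:=\omega^{r-1}|_{\pi^{-1}(z)}$. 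Since $T'\ge 0$ the function $\psi$ is $\theta'$-psh, hence $\nu(T',x)=\nu(\psi,x)$; and since all fibres of $\pi$ are homologous and $\omega$ is $d$-closed, $c:=\int_{\pi^{-1}(z)}\Omega_z=\int_{\pi^{-1}(x)}\omega^{r-1}$ is independent of $z$. Thus the Lemma is reduced to proving $\nu(\psi,x)\ge c\,\nu_0$.

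For this I would argue as follows. For almost every $w\in\mathbb{P}^{r-1}$ the slice $\phi(\cdot,w)$ is quasi-psh on $U$ and $\not\equiv-\infty$ (Fubini), and restricting a quasi-psh function to a submanifold does not decrease its Lelong number, so $\mu(w):=\nu\big(\phi(\cdot,w),x\big)\ge\nu\big(\phi,(x,w)\big)\ge\nu_0$. Put $f_w(s):=\max_{|z-x|\le e^{s}}\phi(z,w)$: up to a bounded additive term uniform in $w$ (absorbing $\theta$), $f_w$ is a convex increasing function of $s$ with $\lim_{s\to-\infty}f_w(s)/s=\mu(w)$, so $F(s):=\int_{\mathbb{P}^{r-1}}f_w(s)\,\Omega_0$ (with $\Omega_0:=\Omega_x$) is, up to a bounded term, again convex increasing, and by Fatou's lemma
\[
\lambda_\infty:=\lim_{s\to-\infty}\frac{F(s)}{s}\ \ge\ \int_{\mathbb{P}^{r-1}}\mu(w)\,\Omega_0\ \ge\ \nu_0\int_{\mathbb{P}^{r-1}}\Omega_0=c\,\nu_0 .
\]
Finally, bounding $M(\psi,\rho):=\max_{|z-x|=\rho}\psi$ by exchanging the maximum with the fibre integral and using $\Omega_z=\Omega_0+O(\rho)$ together with the a priori upper bound on $\phi$, one obtains $M(\psi,\rho)\le F(\log\rho)\big(1+O(\rho)\big)+O(\rho)$; dividing by $\log\rho<0$ and letting $\rho\to 0$ gives $\nu(\psi,x)=\lim_{\rho\to 0}M(\psi,\rho)/\log\rho\ge\lambda_\infty\ge c\,\nu_0$, which is the desired inequality.

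The main obstacle is precisely this last step: $\psi$ is a genuine average of $\phi$ over the fibres, and one must transfer the fibrewise singularity of $\phi$ along $\pi^{-1}(x)$ to the singularity of $\psi$ at $x$ \emph{without losing a multiplicative constant}, which is what forces the detour through the log-convexity of the fibrewise-averaged maxima and Fatou's lemma. Everything else is either formal — the identity $T'|_U=\theta'+dd^c\psi$, the homological invariance of $c$ — or consists of standard facts about quasi-psh functions (log-convexity of maxima on spheres and the characterisation of Lelong numbers through them, the behaviour of Lelong numbers under restriction to submanifolds; see e.g.\ \cite{Dem}); the only additional care needed is the routine bookkeeping of the bounded error terms ($\Omega_z$ versus $\Omega_0$, quasi-psh versus psh), all of which wash out after dividing by $\log\rho$.
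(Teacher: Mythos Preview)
Your argument is correct, but it takes a quite different route from the paper's. The paper disposes of the lemma in a few lines via Siu's decomposition and slicing: one first reduces to $\dim X=1$ by restricting $T'$ to a generic disc $\Delta\ni x$ (using $\nu(T',x)=\nu(T'|_\Delta,x)$ from \eqref{restrictionprop} and that Lelong numbers only increase under restriction to $\pi^{-1}(\Delta)$); then, since $\pi^{-1}(x)$ is a \emph{divisor} in $\pi^{-1}(\Delta)$, one sets $a=\nu(T,\pi^{-1}(x))$ and invokes Siu's theorem to get $T_a:=T-a[\pi^{-1}(x)]\ge 0$, whence
\[
T'=\pi_*(T_a\wedge\omega^{r-1})+a\Big(\int_{\pi^{-1}(x)}\omega^{r-1}\Big)[x]\ \ge\ ac\,[x],
\]
and the inequality follows immediately.

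Your approach, by contrast, works directly in any dimension with local potentials and fibre averages, trading Siu's theorem for the log-convexity of $s\mapsto\max_{|z|\le e^s}\phi(z,w)$ and a Fatou-type integration over the fibre. This is more elementary in that no structural decomposition of positive currents is invoked, but it is considerably longer, and the ``routine bookkeeping'' for $\Omega_z$ versus $\Omega_0$ is slightly more delicate than you indicate: since $\phi$ is unbounded below, one cannot directly control $\int\phi\,(\Omega_z-\Omega_0)$; instead one should write $\phi=M-(M-\phi)$ with $M-\phi\ge 0$ and use $(1-C\rho)\Omega_0\le\Omega_z\le(1+C\rho)\Omega_0$ to obtain $M(\psi,\rho)\le(1-C\rho)F(\log\rho)+O(1)$, which indeed washes out after dividing by $\log\rho$. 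The paper's argument buys brevity and conceptual transparency; yours buys independence from Siu's decomposition and would adapt to settings where subtracting the divisorial part of $T$ is not available.
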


\begin{proof}
	We first assume that $\dim X=1$. Set $a:= \nu (T, \pi^{-1} (x))$. Then $T_a:= T - a [\pi^{-1} (x)]$ is a $(1,1)$-positive current. We have  
	$$T'=\pi_* (T_a \wedge \omega^{r-1})  + a \left(\int_{\pi^{-1} (x)} \omega^{r-1} \right) [x] \geq a\left(\int_{\pi^{-1} (x)} \omega^{r-1} \right) [x] . $$
	Thus, the lemma is proved.
	
	If $\dim X\geq 1$, we take a generic one-dimensional unit disc $\Delta \subset X$ passing through $x$. By \eqref{restrictionprop}, we have $\nu (T' , x)= \nu (T' |_{\Delta}, x)$. Thus, the proposition is reduced to the one-dimensional case.
\end{proof}

\begin{defn}\label{anapsf}
	Let $X$ be a compact complex manifold, and let $\mathcal{F}$ be a reflexive sheaf on $X$. Let $X_0 \subset X$ be the locally free locus of $\mathcal{F}$.  Consider the projection $\pi: \mathbb{P} (\mathcal{F} |_{X_0}) \to X_0$, and let $Y$ be a desingularization of $\mathbb{P} (\mathcal{F})$ with a hermitian metric $\omega_Y$. Then $\mathbb{P} (\mathcal{F} |_{X_0})$ can be embedded in $Y$.
	
	We say that $\mathcal{F} |_{X_0}$ is psef if, for every $\epsilon > 0$, there exists a possibly singular metric $h_\epsilon$ on $\mathcal{O}_{\mathbb{P} (\mathcal{F} |_{X_0})} (1)$ 
	such that  $i\Theta_{h_\epsilon} (\mathcal{O}_{\mathbb{P} (\mathcal{F} |_{X_0})} (1)) \geq -\epsilon\omega_Y $ on $\mathbb{P} (\mathcal{F} |_{X_0})$.
\end{defn}

Since $\mathbb{P} (\mathcal{F} |_{X_0})$ is not compact (its complement may have codimension $1$), it is not clear whether the above definition implies the existence of a metric $h$ such that  
$i\Theta_{h} (\mathcal{O}_{\mathbb{P} (\mathcal{F} |_{X_0})} (1)) \geq 0 $ on $\mathbb{P} (\mathcal{F} |_{X_0})$.
Thus, Definition \ref{anapsf} is à priori weaker than \cite[Definition 4.1]{Ou} and \cite{HP19}. 
However, following a similar argument as in \cite[Section 4]{Ou}, we can still prove that the maximal slope $\mu_{\alpha, \max} (\mathcal{F}) \geq 0$ for any mobile class $\alpha \in \Mov (X)$ cf. Proposition \ref{slope} in appendix.

\section{Positivity of relative canonical bundles in K\"ahler context}
\medskip
\noindent In this section, we consider the following setup:
\begin{enumerate}
	\item[(i)] $X$ and $Y$ are compact K\"ahler manifolds, and $p: X\to Y$ is a holomorphic, surjective map.
	\smallskip
	
	\item[(ii)] There exist a K\"ahler metric $\omega$ and a holomorphic (2,0)-form $\sigma$ on $X$ such that  
	\[\omega+ \sigma+ \overline{\sigma}\]
	represents the Chern class of a topological line bundle. 
	\smallskip
	
	\item[(iii)] For each generic point $y\in Y$, we have $\sigma|_{X_y}= 0$, where $X_y= p^{-1}(y)$ is the fiber of $p$ above $y$. In particular, the morphism $p$ is locally projective over some Zariski open subset of $Y$.
\end{enumerate}
\medskip

\noindent The divisor $D(p)$ associated with the map $p$ is defined as follows:
\[
D(p)= \sum (m_k-1)W_k,
\]
where $W_k$ are irreducible hypersurfaces of $X$ such that $p(W_k) = Z_k$ is a hypersurface of $Y$, and $m_k$ is the multiplicity of $p^{-1}(Z_k)$ along $W_k$. 

\noindent We now state the main result established in this section.

\begin{theorem}\label{thm1}
	Assume that hypotheses (i)--(iii) above are satisfied. 	
	Moreover, let $(L, h_L)$ be a line bundle such that the following conditions hold:
	\begin{itemize}
		\item The curvature current $\sqrt{-1}\Theta(L, h_L)\geq 0$ is positive on the total space $X$, and the multiplier ideal sheaf of $h_L$ is trivial.
		
		\item 
		The adjoint bundle $\displaystyle K_{X_y}+ L|_{X_y}$ is pseudo-effective for each $y\in Y\setminus W$, where $W\subsetneq Y$ is a proper analytic subset of $Y$.
	\end{itemize}
	Then the twisted relative canonical bundle \[K_{X/Y}+ L- D(p)\] is pseudo-effective as well.
\end{theorem}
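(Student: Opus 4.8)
Pseudo-effectiveness of $K_{X/Y}+L-D(p)$ is equivalent to the existence of a singular hermitian metric $h$ on $K_{X/Y}+L$ whose curvature current dominates the divisor $D(p)$, i.e. $\sqrt{-1}\Theta_h(K_{X/Y}+L)\ge[D(p)]$ (tensor $h$ with the canonical singular metric of $\mathcal O_X(-D(p))$). I would build such an $h$ by first constructing a metric of semipositive curvature on $(K_{X/Y}+L)|_{X_0}$ over a Zariski-open $X_0=p^{-1}(Y_0)$, and then extending it across $p^{-1}(W')$, $W'=Y\setminus Y_0$, controlling the weights so that the poles picked up along the multiple components $W_k$ of $p^{-1}(W')$ account precisely for $D(p)$.

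\textbf{Step 1: local metrics over the good locus.} Fix $W'\subsetneq Y$ a proper analytic subset containing $W$, the discriminant of $p$, and all the hypersurfaces $Z_k=p(W_k)$, so that $p\colon X_0\to Y_0$ is a smooth proper submersion. Hypotheses (ii)--(iii) supply the relative polarisation: the restriction $\{\omega+\sigma+\overline\sigma\}|_{X_y}=\{\omega|_{X_y}\}$ is a rational Kähler class, so the fibres are projective, and over a small ball $U\subset Y_0$ a suitable integral multiple of this class lifts to an integral class on $p^{-1}(U)$ whose image in $H^2(\mathcal O_{p^{-1}(U)})$ vanishes --- it is fibrewise of type $(1,1)$, while $H^1(U,R^1p_*\mathcal O)=H^2(U,\mathcal O_U)=0$ by Cartan's Theorem B --- hence equals $c_1(A_U)$ for a line bundle $A_U$ on $p^{-1}(U)$ that is relatively ample over $U$ after shrinking. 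For $k\gg1$ the direct image of $\mathcal O_X(k(K_{X/Y}+L)+A_U)$ equipped with the fibrewise $L^{2/k}$-norm built from $h_L$ (meaningful since $\mathcal I(h_L)=\mathcal O_X$ and $\sqrt{-1}\Theta(L,h_L)\ge0$) carries a Bergman-kernel metric of semipositive curvature by the Berndtsson--Păun type results of \cite{BP}; dividing by $k$ and by a fixed smooth metric on $A_U$ and letting $k\to\infty$ produces a singular metric $h_U$ on $(K_{X/Y}+L)|_{p^{-1}(U)}$ with $\sqrt{-1}\Theta_{h_U}\ge0$ whose restriction to each fibre is the minimal singular metric of the (pseudo-effective) bundle $K_{X_y}+L|_{X_y}$.

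\textbf{Step 2: gluing (the main obstacle).} The $h_U$ are built from auxiliary polarisations $A_U$ which, for different balls, agree only up to a holomorphically varying family of flat line bundles (their Chern classes all equal the fixed rational class). I expect the heart of the argument --- following the ideas of \cite{HP24} and \cite{MP2} --- to be showing that this ambiguity disappears in the limit $k\to\infty$: the $\tfrac1k$-multiples of the $\mathrm{Pic}^0$-contributions tend to $0$, and once the fibrewise limit is identified with the \emph{intrinsic} minimal singular metric of $K_{X_y}+L|_{X_y}$, the $h_U$ are seen to coincide on overlaps. They then patch to a globally defined singular metric $h_{X_0}$ on $(K_{X/Y}+L)|_{X_0}$ with $\sqrt{-1}\Theta_{h_{X_0}}\ge0$ on all of $X_0$.

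\textbf{Step 3: extension and the divisor $D(p)$.} Let $\varphi_{X_0}$ be the local weights of $h_{X_0}$. Near $p^{-1}(W')$ but away from the $W_k$, the restriction of $h_{X_0}$ to a nearby fibre --- a minimal singular metric on a line bundle whose class $\{\omega|_{X_y}\}$ is locally constant --- is comparable, uniformly in $y$, to a fixed smooth metric, so $\varphi_{X_0}$ is bounded above there. Near a component $W_k=\{w_k=0\}$ of multiplicity $m_k$, the natural local frame of $K_{X/Y}$ obtained by dividing a frame of $K_X$ by $p^\ast(dt)=m_kw_k^{m_k-1}\,dw_k\wedge\cdots$ has a pole of order $m_k-1$ along $W_k$; combined with the fibrewise boundedness this yields $\varphi_{X_0}\le(m_k-1)\log|s_k|_k^2+O(1)$, where $s_k$ cuts out $W_k$ and $|\cdot|_k$ is a fixed smooth metric on $\mathcal O_X(W_k)$ with curvature $\Theta_k$. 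Hence $\psi:=\varphi_{X_0}-\sum_k(m_k-1)\log|s_k|_k^2$ is bounded above near $p^{-1}(W')$ and satisfies $dd^c\psi\ge\sum_k(m_k-1)\Theta_k$ on $X_0$ (since $[W_k]=0$ there), so by the standard extension theorem for quasi-psh functions bounded above off an analytic subset it extends to $\widetilde\psi$ on $X$ with the same lower Hessian bound. Then $\widetilde\varphi:=\widetilde\psi+\sum_k(m_k-1)\log|s_k|_k^2$ defines a singular metric $\widetilde h$ on $K_{X/Y}+L$ agreeing with $h_{X_0}$ over $X_0$, and by Poincaré--Lelong ($dd^c\log|s_k|_k^2=[W_k]-\Theta_k$),
\[
\sqrt{-1}\Theta_{\widetilde h}(K_{X/Y}+L)=dd^c\widetilde\psi+\sum_k(m_k-1)\bigl([W_k]-\Theta_k\bigr)\ \ge\ \sum_k(m_k-1)[W_k]=[D(p)].
\]
Thus $K_{X/Y}+L-D(p)$ admits a metric with semipositive curvature current and is pseudo-effective. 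The only genuinely delicate point is Step 2; Step 1 is standard (via \cite{BP}) and Step 3 amounts to the familiar weight estimates.
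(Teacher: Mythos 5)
Your proposal follows the same overall architecture as the paper's proof: local Bergman-kernel metrics à la Berndtsson–P\u{a}un over coordinate balls, identification of these metrics on overlaps via $\mathrm{Pic}^0$-independence of the fibrewise minimal metric, and extension across the bad locus with controlled poles along the multiple components. You also correctly identify Step 2 as the crux. However, Step 2 is not actually carried out, and the mechanism you offer — "the $\tfrac{1}{k}$-multiples of the $\mathrm{Pic}^0$-contributions tend to $0$" — does not prove it. The flat-metric contributions to the $L^{2/k}$-norms do wash out trivially (a flat metric is bounded on a compact fibre), but that is not the obstruction. What must be shown is an \emph{existence} statement about sections: that the fibrewise minimal metric built from $\bigcup_{\rho\in\mathrm{Pic}^0}V_{k,\rho}$ coincides with the one built from a single fixed $\rho_0$. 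The nontrivial inequality $h_{\min}\geq h_{\min,\rho_0}$ requires producing, from any given $s\in V_{k_0,\rho}$, a section $u\in V_{k,\rho_0}$ for large $k$ with controlled $L^{2/k}$-norm. The paper (Lemma \ref{minmetrc}) does this via Shokurov's trick: an Euler-characteristic comparison for the multiplier ideal $\mathcal{I}$ of $\frac{k-1}{k_0}\log|s|^2$, combined with Nadel vanishing, gives $H^0\big(k(K_Z+F+\tfrac{1}{m_0}A)+\rho_0\otimes\mathcal{I}\big)\neq 0$, and the resulting $u=s^{n_k}v_k$ with $v_k$ in a bounded family lets one pass to the limit. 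Observing that $\tfrac{1}{k}\varphi_\rho\to 0$ does not substitute for this.

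There is a second gap in Step 3. You assert that near $p^{-1}(W')$ but away from the $W_k$, the potential $\varphi_{X_0}$ is bounded above because the fibrewise minimal singular metrics are "comparable, uniformly in $y$, to a fixed smooth metric." This uniformity is precisely what must be established and is not automatic as $y$ degenerates towards $W'$; the fibrewise minimal metric has no a priori uniform sup-bound against a reference. The paper obtains the required bound via a local $L^{2/k}$ Ohsawa–Takegoshi extension: any normalized $u$ on $X_y$ extends to a holomorphic $F_\Omega$ on a Stein chart $\Omega$ with $\int_\Omega|F_\Omega|^{2/k}e^{-\varphi_L-\frac{1}{m_0}\varphi_{\mathcal H}}\leq C_0$, a bound independent of $y$ and $k$, from which the sup-estimate for $\varphi^{(k)}_{\mathcal{X}/U}$ follows. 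Your blueprint omits this ingredient. Finally, you collapse the $k\to\infty$ and $m_0\to\infty$ limits by claiming to produce a metric directly on $(K_{X/Y}+L)|_{p^{-1}(U)}$; the paper instead keeps $m_0$ fixed through all of Steps two–four, constructing and extending a positive current in the class $c_1(K_{X/Y}+L)+\tfrac{1}{m_0}\{\omega\}-[D(p)]$, and only lets $m_0\to\infty$ at the very end, so that the psef conclusion is for $K_{X/Y}+L-D(p)$ as a limit of psef classes. Merging the limits as you do would require uniform control in $m_0$ that is not justified.
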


\begin{remark}
	{\rm 
		Of course, we expect this result to hold in general, without assuming the existence of the decisive form $\sigma$ as in (ii)--(iii) above.
		A version of this was conjectured by Cao-H\"oring in \cite{CH}. We refer to \cite{MP1} and \cite{HG} for some results in this direction. }
\end{remark}

\noindent The main ideas for the proof of this statement are contained in the articles 
\cite{HP24} and \cite{MP2}. However, since Theorem \ref{thm1} does not appear to be stated explicitly anywhere, we will provide an almost complete treatment in what follows. 

\noindent In the current context, we can use the fact that $p$ is locally projective to construct a positively curved metric on $K_{X/Y}+ L- D(p)$ over a coordinate subset of the base $Y$. The main new difficulty we address here is the dependence on the chosen subset: the metric we construct relies on the local projectivity of $p$, but in the end, we show that it is independent of this choice using the following lemma.

\begin{lemma}\label{minmetrc}
	Let $Z$ be a projective manifold, and let $(F, h_F)$ be a holomorphic line bundle with a possibly singular metric $h_F$ such that the adjoint bundle 
	$K_Z+ F$ is pseudo-effective, the curvature $i\Theta_{h_F} (F) \geq 0$ in the sense of currents, and the multiplier ideal sheaf  $\mathcal{I} (h_F)= \mathcal O_Z$.
	
	\noindent Let $A$ be an ample line bundle on $Z$ with a smooth metric $h_A$.
	For each topologically trivial line bundle $\rho \in \Pic^0 (Z)$, let $h_\rho$ be a metric on $\rho$ such that $i\Theta_{h_\rho} (\rho) \equiv 0$.
	
	\noindent Fix an integer $m_0\in \mathbb{N}^\star$. For each sufficiently divisible integer $k$, we consider the space of sections:
	\[
	V_{k, \rho}:= H^0\big(Z, k(K_Z+ F+ {1}/{m_0}A)+ \rho\big).
	\]
	
	\noindent Let $e$ be a local basis of the $\mathbb{Q}$-bundle $K_Z+ F+ 1/m_0A$. We define two extremal metrics on $K_Z+ F+ 1/m_0A$:
	\[
	|e|^2 _{h_{\rm min}} (x) := \inf_{k, \rho, s} \Bigg\{ \frac{|e|^2 (x)}{|s|_\rho^{\frac{2}{k}} (x)} : s\in V_{k, \rho} \text{ such that } \int_Z|s|^{\frac{2}{k}}_{\rho, h_F, h_A} = 1\Bigg\},
	\]
	and
	\[
	|e|^2 _{h_{\rm min, \rho}} (x) := \inf_{k, s} \Bigg\{ \frac{|e|^2 (x)}{|s|_\rho^{\frac{2}{k}} (x)} : s\in V_{k, \rho} \text{ such that } \int_Z|s|^{\frac{2}{k}}_{\rho, h_F, h_A} = 1\Bigg\},
	\]
	where the subscript $|\cdot |_{\rho, h_F, h_A}$ indicates that we use the metric $h_\rho$ (resp. $h_F, h_A$) on the bundle $\rho$ (resp. $F, A$). 
	
	\noindent Then we have
	\[
	\displaystyle h_{\rm min}= h_{\rm min, \rho} \qquad\text{ for each }\rho \in \Pic^0 (Z).
	\]
\end{lemma}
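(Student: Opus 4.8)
The plan is to show that the two extremal metrics agree by exploiting the fact that $\rho \in \Pic^0(Z)$ is topologically trivial, so that sections of $k(K_Z+F+\frac1{m_0}A)+\rho$ and of $k(K_Z+F+\frac1{m_0}A)+\rho'$ have, after taking $2/k$-th powers, essentially the same $L^{2/k}$ norms up to a bounded factor, and then letting $k\to\infty$ kills the discrepancy. Concretely, fix $\rho_0\in\Pic^0(Z)$. It is clear from the definitions that $h_{\mathrm{min}}\le h_{\mathrm{min},\rho_0}$ pointwise, since the infimum defining $h_{\mathrm{min}}$ ranges over a larger family (all $\rho$, not just $\rho_0$). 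So the real content is the reverse inequality $h_{\mathrm{min},\rho_0}\le h_{\mathrm{min}}$, i.e. that any section $s\in V_{k,\rho}$ contributing to $h_{\mathrm{min}}$ can be matched, up to a negligible multiplicative loss, by a section in some $V_{k',\rho_0}$.

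The first step is to record that $\Pic^0(Z)$ is a compact complex torus: since $Z$ is projective (indeed we only need Kähler here), $\Pic^0(Z)=H^1(Z,\mathcal O_Z)/H^1(Z,\mathbb Z)$ is a compact torus, hence divisible, and the flat metrics $h_\rho$ can be chosen to depend on $\rho$ in a measurable/continuous way with $i\Theta_{h_\rho}(\rho)\equiv 0$. The second step is the key algebraic trick: given $s\in V_{k,\rho}$, write $\rho=\rho_0+\tau$ with $\tau\in\Pic^0(Z)$; for a suitable large multiple $N$ we have $N\tau$ trivializable, but more to the point, for each fixed $k$ the line bundle $k(K_Z+F+\frac1{m_0}A)+\rho_0$ and $k(K_Z+F+\frac1{m_0}A)+\rho$ differ by the flat bundle $\tau$, and one can pass from one to the other after multiplying $k$ by a large integer $q$ so that $q$ copies of $\tau$ shift back to $\rho_0$ — more precisely, take sections $s^{\otimes q}$ of $q k(K_Z+F+\frac1{m_0}A)+ q\rho$ and twist by a fixed section realizing $q\rho - k'\rho_0$-type corrections; the point is that since all these twisting bundles are flat, the $L^2$ norms are comparable with constants independent of $k$, so after taking $2/(qk)$-th roots the constants tend to $1$. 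Hence $|e|^2_{h_{\mathrm{min},\rho_0}}(x)\le |e|^2_{h_{\mathrm{min}}}(x)$, giving equality.

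The main obstacle I expect is making the twisting argument in the previous paragraph genuinely uniform: one must produce, for every $\rho\in\Pic^0(Z)$, a family of auxiliary sections of flat bundles whose $L^\infty$ norms (upper and lower bounds) are controlled independently of $k$, so that the $2/k$-th root washes them out. The cleanest way is probably to fix once and for all a finite set of generators / a bounded fundamental domain for the torsion-and-divisibility structure of $\Pic^0(Z)$, use that a flat line bundle $\tau$ with $i\Theta_{h_\tau}(\tau)\equiv 0$ admits, on passing to a fixed finite cover or after tensoring by a fixed ample power, a section with norm bounded above and below by constants depending only on $(Z,A,h_A)$ and not on $k$, and then feed this into the Ohsawa–Takegoshi/extension machinery exactly as in \cite{BP}. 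An alternative, slicker route: observe that the whole construction of $h_{\mathrm{min}}$ is invariant under $\rho\mapsto\rho'$ because $\Pic^0(Z)$ acts on the data by an isometry that fixes $K_Z+F+\frac1{m_0}A$ as a $\mathbb Q$-bundle; concretely, for each pair $\rho,\rho'$ and each sufficiently divisible $k$ one has a canonical isomorphism $V_{k,\rho}\xrightarrow{\ \sim\ }V_{k,\rho'}$ after a further fixed multiplication of $k$, compatible with the $L^{2/k}$ norms up to a factor $\to 1$, which immediately yields $h_{\mathrm{min},\rho}=h_{\mathrm{min},\rho'}$ and in particular, taking the infimum over $\rho$, that $h_{\mathrm{min}}=h_{\mathrm{min},\rho}$ for every $\rho$.
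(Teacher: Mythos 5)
Your proposal correctly isolates the trivial direction $h_{\rm min}\le h_{\rm min,\rho_0}$ and the fact that the nontrivial direction requires, from any competitor $s\in V_{k,\rho}$, producing a competitor in $V_{k',\rho_0}$ with comparable normalized weight. However, the mechanism you propose for doing this has a genuine gap. You want to pass from $s^{\otimes q}\in H^0\big(qk(K_Z+F+\tfrac1{m_0}A)+q\rho\big)$ to a section of the $\rho_0$-twist by multiplying with ``a fixed section realizing $q\rho-k'\rho_0$-type corrections'' of a flat line bundle. But a nontrivial element of $\Pic^0(Z)$ has no nonzero global holomorphic sections at all, so such twisting sections simply do not exist. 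Your two proposed fixes do not close the gap: a finite cover can trivialize $\tau\in\Pic^0(Z)$ only when $\tau$ is torsion (a measure-zero subset of the torus), and tensoring by a fixed ample power $A^N$ produces sections but alters the bundle by $N c_1(A)$, a contribution that does not vanish after taking $2/k$-th roots since $N$ would have to grow with $k$. The ``slicker'' alternative has the same problem in disguise: $\Pic^0(Z)$ does not act on $Z$ by isometries, and a canonical isomorphism $V_{k,\rho}\xrightarrow{\ \sim\ }V_{k,\rho'}$ compatible with norms is exactly the nonexistent twisting section again.

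The paper circumvents this obstruction by never producing a section of a flat bundle. Instead it invokes Shokurov's trick: one fixes the multiplier ideal $\sI$ of the weight $\tfrac{k-1}{k_0}\log|s|^2$, observes via Riemann--Roch that
\[
\chi\big(Z,(k(K_Z+F+\tfrac1{m_0}A)+\rho_0)\otimes\sI\big)=\chi\big(Z,(k(K_Z+F+\tfrac1{m_0}A)+\tfrac{k}{k_0}\rho)\otimes\sI\big)
\]
because $\rho,\rho_0$ have the same Chern class, and then uses Nadel vanishing to convert this into an equality of $H^0$'s. Since $s^{k/k_0}$ lies in the right-hand side, the left-hand side is nonzero, giving the desired $u\in V_{k,\rho_0}$ with the required $L^{2/k}$-normalization via H\"older. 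A second step you have not addressed is then needed: factoring $u=s^{n_k}v_k$ (which holds because $u$ lies in the multiplier ideal $\sI$, hence is divisible by a high power of $s$), establishing uniform quasi-psh and $\sup$-bounds on $\phi_k=\log|v_k|^2_{h_0,\rho}$, and passing to the limit $k\to\infty$ to show the normalized weight converges to $\tfrac1{k_0}(\log|f_s|^2-\varphi_\rho)$. This analytic limiting argument is the content of equations \eqref{eq30}--\eqref{eq5} in the paper and is essential to conclude; without it, even the existence of $u$ would not yet give the inequality on $h_{\rm min,\rho_0}$.
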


\begin{proof}
The argument is almost identical to the one used in the last section of \cite{MP2}. The only difference is the manner in which the sections are normalized: in \emph{loc. cit.}, we use the supremum instead of the $L^{2/k}$-norm. 
Note that, by uniform global generation (cf. \cite{Siu}), we have $V_{k,\rho} \neq 0$ when $k$ is sufficiently divisible. 
\smallskip

\noindent  
Fix $\rho_0 \in \Pic^0 (Z)$. We obviously have $\displaystyle h_{\rm min} \leq h_{\rm min, \rho_0}$. 
Now, we prove that $\displaystyle h_{\rm min} \geq h_{\rm min, \rho_0}$. For each sufficiently divisible $k_0 \in \bZ_+$ and for each $\rho \in \Pic^0 (Z)$, we consider a section
\begin{equation}\label{eq1}
	s \in H^0\big(Z, k_0(K_Z+ F+ 1/m_0A)+ \rho\big).
\end{equation}
Since $\mathcal{I} (h_F) = \mathcal{O}_Z$, we know that $\int_Z |s|^{\frac{2}{k_0}} _{\rho, h_F, h_A}  < +\infty$. After normalization, we can assume that $\int_Z |s|^{\frac{2}{k_0}} _{\rho, h_F, h_A} =1$. 

Given any sufficiently large $k \gg 0$ such that $k = k_0 m_0 d$ for some $d \in \bZ$, we write
\[
k(K_Z+ F+ 1/m_0A)+ \rho_0= K_Z+ F+ {(k-1)}\big(K_Z+ F+ 1/m_0A\big)+ \frac{1}{m_0}A+ \rho_0
\]
and use the appropriate power of the section $s$ in \eqref{eq1} (together with the flat metric on $\rho$) to define a singular metric on \[(k-1)\big(K_Z+ F+ 1/m_0A\big)\]
whose curvature is positive.

	\medskip
	
\noindent By Shokurov's trick (cf. \cite{Sho}), we infer the existence of a section 
\[
u\in H^0\big(Z, k(K_Z+ F+ 1/m_0A)+ \rho_0\big)
\]
such that the equality
\begin{equation}\label{eq2}
	\int_Z\frac{|u|_{\rho_0 , h_F, h_A}^2}{|s|_{\rho, h_F, h_A}^{2\frac{k-1}{k_0}}}= 1
\end{equation}
holds. More precisely, this can be seen as follows. Let $\sI$ be the multiplier ideal sheaf corresponding to the weight
$\frac{k-1}{k_0}\log |s|^2$. We clearly have
\[
\chi\big(Z, \big(k(K_Z+ F+ 1/m_0A)+ \rho_0\big)\otimes \sI\big)= \chi\big(Z, \big(k(K_Z+ F+ 1/m_0A)+ \frac{k}{k_0}\rho\big)\otimes \sI\big).
\]
By the Nadel vanishing theorem, the higher cohomology groups vanish.  
Thus, we obtain
\begin{equation}\label{eq20}
	H^0\big(Z, \big(k(K_Z+ F+ 1/m_0A)+ \rho_0\big)\otimes \sI\big)= H^0\big(Z, \big(k(K_Z+ F+ 1/m_0A)+ \frac{k}{k_0}\rho\big)\otimes \sI\big).
\end{equation}
Since the section $s^{\frac{k}{k_0}}$ belongs to the right-hand side (RHS) of \eqref{eq20}, it follows that, in particular, the left-hand side (LHS) is nonzero. This shows the existence of the section $u$, as claimed in \eqref{eq2}.

In particular, $u$ will be divisible by a large power of $s$. More precisely, we can write  
\begin{equation}\label{eq3}
	u= s^{n_k} v_k
\end{equation}
where 
$n_k := m_0 d - 1 = \lfloor \frac{k-1}{k_0} \rfloor$, and $v_k$ is a section of the bundle  
\[
k_0(K_Z+ F+ 1/m_0A) + \rho_0 - n_k\rho,
\]
whose Chern class is uniformly bounded with respect to $k$, since $c_1(\rho) = 0$.  
Moreover, Hölder's inequality, combined with \eqref{eq2}, shows that we have  
\begin{equation}\label{eq4}
	\log \int_Z |u|^{\frac{2}{k}}_{\rho_0} e^{-\varphi_F - \frac{1}{m_0} \varphi_A} 
	\leq \frac{k-1}{k} \log \int_Z |s|^{\frac{2}{k_0}}_\rho e^{-\varphi_F - \frac{1}{m_0} \varphi_A} = 0.
\end{equation}

	\medskip
	
	Now we use  the arguments in  \cite{MP2}. 
	By  \eqref{eq2}, we have 
	\begin{equation}\label{eq30}
		\int_Z\frac{|v_k|^2_{\rho}}{|s|^{2\frac{k_0-1}{k_0}}}= 1
	\end{equation} 
Let $h_0$ be an arbitrary, smooth metric on the bundle $k_0(K_Z+ F+ 1/m_0A)$. We consider the function
\begin{equation}\label{eq31}
	\phi_k:= \log |v_k|_{h_0, \rho}^2
\end{equation} 
and then there exists a constant $C> 0$ such that we have
\begin{equation}\label{eq32}
	dd^c\phi_k\geq -C\omega,
\end{equation}
for all $k$. Then the boundedness of $k_0(K_Z+ F+ 1/m_0A)+ \rho_0- n_k\rho$ implies that
\begin{equation}\label{eq33}
	-\log {C}\leq \max_Z \phi_k\leq \log C
\end{equation}
for some larger constant $C$. The RHS of \eqref{eq33} is clear -as consequence of \eqref{eq30}. As for the second one, we refer to \cite{MP2}, last part of Section 2.1.

	\smallskip
	
	\noindent This in turn implies that any limit extracted from $(\phi_k)_k$ will be a quasi-psh function, 
	non-identically $-\infty$. We note by $f_s$ the local holomorphic function corresponding to the section $s$ with respect to a trivialisation of $K_Z, F,  A$ and $\rho$. 
	Then we have 
	\begin{equation}\label{eq34}
		\frac{1}{k}(\log |f_u|^2- \varphi_{\rho_0})= \frac{n_k}{k}(\log|f_s|^2- \varphi_\rho)+ \frac{1}{k}\big(\phi_k+ \cO(1)\big)
	\end{equation}
	as consequence of \eqref{eq3}, where the quantity $\cO(1)$ in \eqref{eq34} corresponds to the weights of the metric 
	$h_0$, so it is independent of $k$.
	\smallskip
	
	\noindent By taking the limit as $k\to\infty$, we obtain
	\begin{equation}\label{eq5}
		\varphi_{min, \rho_0}(x)\geq \frac{1}{k_0}(\log|f_s|^2- \varphi_\rho)  ,
	\end{equation} 
	where $\varphi_{min, \rho_0}$ is the weight function of $	h_{min, \rho_0}$.
This implies that the inequality
	\begin{equation}\label{eq6}
	h_{min, \rho_0}\leq h_{min}
	\end{equation} 
	is also verified. As already mentioned, the opposite inequality is obvious, so all in all, the lemma is established.
\end{proof}

We are now ready for the proof of Theorem \ref{thm1}.
\begin{proof}[Proof of Theorem \ref{thm1}] 
	
We have divided our arguments in a few steps. The main claim is emphasised at the beginning of each step.
\medskip

\noindent $\bullet$ {\bf Step one.} \emph{Let $\cA$ be the line bundle on $X$ given by {\rm (ii)}. For each $y\in Y$ generic, the restriction $\displaystyle \cA|_{X_y}$ admits a holomorphic structure, and a Hermitian metric whose curvature is equal to the $\displaystyle \omega|_{X_y}$. Moreover, given any $y_0\in Y$ generic there exists an open coordinate subset $y_0\in U\subset Y$ and an ample line bundle $\sH_U$ on $p^{-1}(U)$ together with a positively curved metric $h_U$ whose local weights are given by potentials of $\displaystyle \omega|_{p^{-1}(U)}$. }

\noindent The claim above follows by the familiar arguments proving that for any representative of the Chern class of a holomorphic line bundle there exists a Hermitian metric whose curvature is precisely the said representative.
\medskip

\medskip

\noindent $\bullet$ {\bf Step two.} \emph{Psh variation of metrics with minimal singularities} Let $p: \cX\to U$ be the co-restriction of $p$ to the coordinate set $U\subset Y$, cf. Step 1. We recall the following statement.
\begin{theorem}\cite{BP}\label{thm2} 
Assume that the hypothesis in Theorem \ref{thm1} are satisfied.  
Then for all integers $m_0\geq 1$ the $\bQ$-bundle $K_{\cX/U}+ L+  1/m_0\sH_U$ is pseudo-effective. More precisely, the metric whose restriction to the generic fiber $X_y$ is given by $h_{\rm min}$ defined  Lemma \ref{minmetrc}, applied to the following data: \[Z:= X_y,\quad F:= L|_{X_y}, \quad A:= \sH_U|_{X_y}\]
induces a positively curved metric on $K_{\cX/U}+ L+  1/m_0\sH_U$.
\end{theorem}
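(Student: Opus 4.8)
The plan is to recognise this statement as a local instance of the Berndtsson--P\u aun positivity theorem for twisted relative canonical bundles \cite{BP}, and then to check that its hypotheses hold fibrewise over the good locus of $U$. First I would set $L':=L+\frac1{m_0}\sH_U$, a $\bQ$-line bundle on $\cX=p^{-1}(U)$, equipped with the metric $h_{L'}:=h_L\otimes h_U^{1/m_0}$; since $h_U$ is smooth we have $i\Theta_{h_{L'}}(L')\ge\frac1{m_0}\,i\Theta_{h_U}(\sH_U)>0$ on $\cX$ and $\mathcal I(h_{L'})=\mathcal I(h_L)=\mathcal O_X$. Let $U^\circ\subset U$ be the complement of $W$ together with the locus over which $p$ is not a submersion; it is a dense Zariski-open subset of $U$. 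For $y\in U^\circ$ the fibre $X_y$ is smooth projective and, by adjunction, $(K_{\cX/U}+L')|_{X_y}=K_{X_y}+L'|_{X_y}$; as $K_{X_y}+L|_{X_y}$ is pseudo-effective (second bullet of Theorem \ref{thm1}) and $\frac1{m_0}\sH_U|_{X_y}$ is ample, this $\bQ$-bundle is big, so $V_{k,0}=H^0\bigl(X_y,k(K_{X_y}+L'|_{X_y})\bigr)\neq 0$ for every sufficiently divisible $k$ (this is also what uniform global generation \cite{Siu} provides). Hence the triple $(X_y,L|_{X_y},\sH_U|_{X_y})$ satisfies the hypotheses of Lemma \ref{minmetrc}, $h_{\min}$ is well defined on $(K_{\cX/U}+L')|_{X_y}$, and by that lemma $h_{\min}=h_{\min,0}$, i.e. it is the pointwise infimum, over all divisible $k$, of the fibrewise $L^{2/k}$-Bergman metrics associated with $\bigl(V_{k,0},\int_{X_y}|\cdot|^{2/k}_{h_{L'}}\bigr)$.

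Next I would invoke \cite{BP}: for a proper K\"ahler fibration $p\colon\cX\to U$ and a $\bQ$-line bundle $(L',h_{L'})$ with $i\Theta_{h_{L'}}(L')\ge 0$ and $\mathcal I(h_{L'})$ trivial on the generic fibre, these fibrewise $L^{2/k}$-Bergman metrics patch into a metric on the line bundle $k(K_{\cX/U}+L')$ over $p^{-1}(U^\circ)$ whose local weight is plurisubharmonic. Should a self-contained argument be needed, I would reproduce the mechanism of \cite{BP}: the decomposition
\[
k\bigl(K_{\cX/U}+L'\bigr)=K_{\cX/U}+\Bigl((k-1)\bigl(K_{\cX/U}+L'\bigr)+L'\Bigr),
\]
an induction on $k$ (anchored at Berndtsson's $\dbar$-curvature computation) that produces a positively curved metric on the second summand from the metric built at the preceding stage together with $h_{L'}$, and the Ohsawa--Takegoshi extension theorem, which turns the ensuing positivity of $p_*\bigl(k(K_{\cX/U}+L')\bigr)$ into the pointwise plurisubharmonic variation of the Bergman kernel. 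Taking $1/k$-th roots gives positively curved metrics $h_k$ on the $\bQ$-bundle $K_{\cX/U}+L'$ over $p^{-1}(U^\circ)$, and their decreasing limit is a metric $h$ with plurisubharmonic weight whose fibrewise restriction is $h_{\min}$; that the limit still has a psh weight uses that each $h_k$ does and that the weights are locally uniformly bounded below, which follows from the $L^{2/k}$-estimates entering the construction together with $\mathcal I(h_L)=\mathcal O_X$.

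Finally I would extend $h$ across $\cX\setminus p^{-1}(U^\circ)$: its weight is locally bounded above near that analytic set---by comparison with a fixed smooth metric, using $\sup$-norm bounds for the sections defining $h_{\min}$---so it extends to a plurisubharmonic weight on all of $\cX$ by the standard extension property of plurisubharmonic functions across analytic subsets. This yields the pseudo-effectivity of $K_{\cX/U}+L+\frac1{m_0}\sH_U$, with the claimed description of the metric. The main obstacle is the middle step: the plurisubharmonic variation of the fibrewise Bergman kernels is the technical heart of \cite{BP}, resting on Berndtsson's curvature inequality (a H\"ormander-type $\dbar$-estimate together with positivity of a curvature operator) and on Ohsawa--Takegoshi; and in the present fractional, non-compact situation the additional care needed is to keep the $\bQ$-bundle normalisations and the control of the weights near $\cX\setminus p^{-1}(U^\circ)$ tight enough that $h$ genuinely extends there.
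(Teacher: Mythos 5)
Your proposal is essentially the paper's own treatment: the paper states Theorem \ref{thm2} with the citation \cite{BP} and gives no independent proof, and your sketch is precisely the Berndtsson--P\u aun mechanism (fibrewise $L^{2/k}$-Bergman metrics, Ohsawa--Takegoshi/psh variation, inf over $k$, extension across the bad locus) that underlies it, combined with the identification $h_{\min}=h_{\min,0}$ from Lemma \ref{minmetrc}. The only cosmetic difference is that you fold the extension across $p^{-1}(W)$ into the argument here, whereas the paper defers that to Step four of the proof of Theorem \ref{thm1}; the substance is the same.
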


\noindent In other words, the weight of the metric $h_{\rm min}$ constructed in Theorem \ref{thm2}  is psh.  Now let $U_1$ and $U_2$ be two coordinate open subsets such that $U_1\cap U_2\neq \emptyset$. It is clear that in general we will have
\[\sH_{U_1}|_{X_y}\neq \sH_{U_2}|_{X_y}\]
but the Chern class of these restrictions coincide with the class defined by some multiple of $\displaystyle \{\omega|_{X_y}\}$. Therefore the metrics we constructed on $K_{\cX/U_1}+ L+  1/m_0\sH_{U_1}$ and 
$K_{\cX/U_2}+ L+  1/m_0\sH_{U_2}$ are the same, and thus we have showed that the restriction of the class
\[c_1(K_{X/Y}+ L)+ 1/m_0\{\omega\}\]
to $X\setminus p^{-1}(W)$ contains a \emph{natural} closed positive current. 
\medskip

\noindent $\bullet$ {\bf Step three.}  We would like to clarify the concerns with respect to the divisor $D(p)$. It is based on the following claim.

\begin{claim} Let $h_{\cX/U}^{(k)}$ be the fiberwise-$L^{2/k}$ metric induced by the space of sections $V_{k, \rho}$ at the generic point of $U$. 
Then the corresponding (renormalised) curvature current in the class $\{K_{\cX/U}+ L+  1/m_0\sH_U\}$ is greater than
\[\sum (m_i- 1)[W_i\cap\cX].\]
\end{claim}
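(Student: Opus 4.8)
The assertion concerns the fiberwise-$L^{2/k}$ metric $h_{\cX/U}^{(k)}$ attached to the spaces of sections $V_{k,\rho}$, and the claim is that its curvature current dominates the divisorial part $\sum (m_i-1)[W_i\cap \cX]$ coming from $D(p)$. The strategy is to analyse the local behaviour of the sections near a component $W_i$ of $D(p)$ and to read off the divisibility forced on them by the relative nature of $K_{\cX/U}$. First I would localize: fix a component $W_i$ with $p(W_i)=Z_i\subset U$ a hypersurface, pick a generic point where all the relevant objects are smooth, and choose coordinates $(t, w)$ on the base adapted to $Z_i=\{t=0\}$ and coordinates on $\cX$ adapted to $W_i$, so that $p$ is given near $W_i$ by $t = z_1^{m_i}\cdot (\text{unit})$ up to lower-order corrections in the remaining directions. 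The point is that a local holomorphic section of $k(K_{X/Y}+L+\tfrac{1}{m_0}\sH_U)+\rho$, expressed via the relative canonical bundle, involves the relative differential $dz/p^*dt$; pulling back $dt$ from the base introduces a factor $z_1^{m_i-1}$ (from $d(z_1^{m_i})$), and hence a section of $k(K_{\cX/U}+\cdots)$ that extends holomorphically across $W_i$ must, when written in an absolute trivialization, vanish to order at least $k(m_i-1)$ along $W_i$. Taking $k$-th roots, the weight of $h_{\cX/U}^{(k)}$ acquires a contribution $\geq (m_i-1)\log|z_1|^2$, which is exactly the statement that the curvature current is $\geq (m_i-1)[W_i\cap\cX]$.

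The technical core is to justify the claim ``every section in $V_{k,\rho}$, restricted to the generic fiber and viewed over $U$, is divisible by $z_1^{k(m_i-1)}$ along $W_i$'' uniformly over the generic fiber, and then to assemble these local contributions into a global lower bound on the current in the cohomology class $\{K_{\cX/U}+L+\tfrac1{m_0}\sH_U\}$. I would argue this fiberwise: over $X_y$ for generic $y\in Z_i$, the fiber $X_y$ meets $W_i$ in a divisor, and the restriction map identifies sections of the relative adjoint bundle on $\cX$ with sections of $K_{X_y}+L|_{X_y}+\cdots$; the discrepancy between $K_{\cX/U}|_{X_y}$ and $K_{X_y}$ along $W_i\cap X_y$ is precisely the multiplicity $m_i-1$, by the standard computation of the relative canonical bundle of a map with non-reduced fibers (the ramification formula $K_{\cX/U} = K_{\cX} - p^*K_U$ together with $p^*Z_i = \sum m_j W_j + (\text{horizontal})$). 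Consequently a holomorphic section of the relative bundle on $\cX$ is, after choosing an absolute frame, a section of the absolute bundle twisted down by $\sum(m_j-1)W_j$, hence vanishes along each $W_j$ to the required order; this vanishing persists after taking the $L^{2/k}$-normalized $k$-th root, giving the desired lower bound on the weight, hence on the curvature current.

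For the global statement I would note that $h_{\cX/U}^{(k)}$ is by construction a genuine (possibly singular) metric with $i\Theta \geq 0$ in the class $\{K_{\cX/U}+L+\tfrac1{m_0}\sH_U\}$ (this uses triviality of $\cI(h_L)$ so that the fiberwise $L^{2/k}$-integrals are finite, exactly as in Lemma \ref{minmetrc} and Theorem \ref{thm2}), so that its curvature current $\Theta_k$ is a well-defined closed positive current; the local analysis above shows $\nu(\Theta_k, w) \geq m_i-1$ for $w$ a generic point of $W_i\cap\cX$, and by semicontinuity of Lelong numbers (Proposition \ref{listproperty}) this bound holds along all of $W_i\cap\cX$. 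Siu's decomposition theorem then lets us split off $\sum(m_i-1)[W_i\cap\cX]$ from $\Theta_k$ as a positive current, which is the assertion of the claim.

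\textbf{Main obstacle.} The delicate point is the uniformity: one must ensure that the divisibility of the sections along $W_i$ is the \emph{same} order $k(m_i-1)$ for the generic fiber and does not degenerate as $y\to Z_i$, and that the $L^{2/k}$-normalization (rather than a sup-normalization) does not interfere with extracting this vanishing order in the limiting weight. I expect the heart of the argument to be a careful bookkeeping of the relative-canonical-bundle identification near a non-reduced fiber component, patching it with the local-projectivity construction of $\sH_U$ from Step one, and then invoking the estimates on weights from \cite{MP2} (as in the proof of Lemma \ref{minmetrc}) to pass to the limit in $k$ while preserving the divisorial lower bound.
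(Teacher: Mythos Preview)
Your approach is the same as the paper's: localize, write $p$ as $(z_1,\dots,z_{n+s})\mapsto (z_1^{m_i},z_2,\dots,z_s)$ near a generic point of $W_i$, extract the Jacobian factor $z_1^{k(m_i-1)}$ from $p^\star dt^{\otimes k}$, and read off the Lelong number. The paper carries this out via the local representative $F_u$ defined by $u\wedge p^\star dt^{\otimes k}=F_u\,dz^{\otimes k}$ and observes that $F_u/z_1^{k(m_i-1)}$ has uniformly bounded $L^{2/k}$-norm on $X_y\cap\Omega$ as $y\to Z_i$; the sup-bound on the weight then follows from the sub-mean-value property, and the Siu-decomposition step you add at the end is fine.

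There is, however, a conceptual slip in your write-up that you should fix before it becomes an actual gap. The sections $u\in V_{k,\rho}$ live on the \emph{smooth} fibres $X_y$ for $y\notin Z_i$, not on $\cX$; nothing is being ``extended holomorphically across $W_i$'', and the phrase ``for generic $y\in Z_i$'' points at the wrong fibres. For $y$ a regular value one has $K_{\cX/U}|_{X_y}=K_{X_y}$ on the nose, with no discrepancy, so your sentence about ``the discrepancy between $K_{\cX/U}|_{X_y}$ and $K_{X_y}$ along $W_i\cap X_y$'' is not meaningful (a smooth fibre does not meet $W_i$). What actually produces the factor $z_1^{k(m_i-1)}$ is the passage from the fibrewise frame of $K_{X_y}^{\otimes k}$ to the absolute frame $dz^{\otimes k}$ via wedging with $p^\star dt^{\otimes k}$: this is an identity on each smooth fibre near $W_i$, and the point is that the quotient $F_u/z_1^{k(m_i-1)}$ stays controlled \emph{uniformly} as $y\to Z_i$ precisely because the $L^{2/k}$-normalisation of $u$ coincides (up to bounded factors) with the integral of $|F_u/z_1^{k(m_i-1)}|^{2/k}$ over $X_y\cap\Omega$. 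Once phrased this way, the ``uniformity'' you flag as the main obstacle is automatic from the normalisation, and no separate extension argument across $W_i$ is needed.
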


\noindent  To prove the claim, let $\Omega\subset \cX$ be some open set such that the bundles $L$ and $\sH_U$ are equally trivial when restricted to $\Omega$. We fix a coordinate system $z= (z_1,\dots, z_{n+ s})$ on $\Omega$ and $t=(t_1,\dots, t_s)$ on $U$. This provides in particular a trivialisation of the relative canonical bundle $K_{\cX/U}$ restricted to $\Omega$. 
Then we have
\[e^{\varphi_{\cX/U}^{(k)}(x)}= \sup_{\Vert u\Vert_{k, y}=1}|F_u(x)|^2\]
where the notations are as follows:
\begin{itemize}

\item $x\in \Omega$ and $y= p(x)$ is a generic point of $U$. Here "generic" has a very precise meaning: $y$ should be a regular point of $p$ and the sections of $V_{k, \rho}$ corresponding to the fiber $X_y$ should extend to the neighbouring fibers. 

\item The $L^{2/k}$- norm is defined by $\displaystyle \Vert u\Vert_{k, y}^{\frac{2}{k}}:= \int_{X_y}|u|^{\frac{2}{k}}e^{-\varphi_L- \frac{1}{m_0}\varphi_\sH}$ (which is why this integral appears in Step two).

\item Finally, the holomorphic function $F_u$ is defined by the equality $\displaystyle u\wedge p^{\star}dt^{\otimes k}= F_u dz^{\otimes k}$ between two twisted pluricanonical forms. Here $dz:= dz_1\wedge\dots\wedge dz_{n+s}$, and a similar convention for $dt$. 
\end{itemize}

\noindent Now, since $dd^c \varphi_{\cX/U}^{(k)}\geq 0$, it would be sufficient to analyse the singularity of this psh function near the generic point of $W_i\cap \Omega$. We therefore can assume that the coordinates have been chose so that the map $p$ is given by the expression.
\[p(z_1,\dots, z_{n+s})= (z_1^{m_i}, z_2,\dots, z_s)\]
It then follows that the quotient $\displaystyle \frac{F_u}{z_1^{m_i-1}}$ is in $L^2$, and moreover its norm on $X_y\cap \Omega$ is uniformly bounded independently as $y$ tends to $p(W_i)$. It follows that 
\[dd^c \varphi_{\cX/U}^{(k)}\geq (m_i-1)[W_i\cap\cX]\]
as claimed (see \cite[Thm 2.3]{CP17} for a more detailed explanation).
\medskip

\noindent $\bullet$ {\bf Step four.} \emph{Extension.} As consequence of the previous steps, we have constructed a closed positive current  
\[\Xi_0= \Theta+ \frac{1}{m_0}\omega+ dd^c\psi\] 
on $X\setminus p^{-1}(W)$, the inverse image of the complement of an analytic subset $W$ of $Y$. 
Here $\Theta$ is a smooth representative of the first Chern class of $K_{X/Y}+ L$. We recall next the "usual arguments" showing that $\Xi_0$ extends across $p^{-1}(W)$. 

Consider the potential $\varphi_{\cX/U}^{(k)}$ of the current $\Xi_0$, as recalled in the previous step. If we are able to show that this function is bounded from above, it follows that $\Xi_0$ extends across the inverse image of $W$, see \cite{Dem}. 

Assume that $u$ is a section of $\displaystyle k(K_{X_y}+ L+ 1/m_0\mathcal H_U|_{X_y})$, as fix a point $x\in X_y$ is the fiber $X_y$ above a generic point $y\in U$. Let $x\in \Omega\subset X$ be a coordinate subset as in Step three. By the local $L^{2/k}$-version of Ohsawa-Takegoshi extension theorem \cite{BP}, there exists a local holomorphic function $F_\Omega$ such that 
\[F_\Omega|_{\Omega\cap X_y}= \frac{u\wedge p^{\star}dt^{\otimes k}}{dz^{\otimes k}}, \qquad \int_{\Omega}|F_{\Omega}|^{2/k}e^{-\varphi_L- \frac{1}{m_0}\varphi_\sH}d\lambda(z)\leq C_0\]
where $C_0$ is a numerical constant (in particular, independent of $k$).

Since $\Omega$ is some small Stein open set, $\omega |_{\Omega}$ is $\ddbar$-exact. Then the local weight $\varphi_{\sH}$ of $\sH_U$ on $\Omega'$ can be uniformly bounded for any open subset $\Omega' \Subset \Omega$ (no matter how close we are from the inverse image of $W$). We therefore infer that there exists a constant $C_1$ so that 
\[\sup_{\Omega'}\frac{1}{k}\log|F_\Omega|^2\leq C_1\]
holds. Finally, the restriction of $F_\Omega$ to the fiber $X_y$ is equal to 
$F_u$, and therefore we have shown that 
\[\sup_{\Omega'\setminus p^{-1}(W)}\varphi_{\cX/U}^{(k)}< \infty.\]
Then $\Xi_0$ can be extended as a positive current on $X$. 
The proof of Theorem \ref{thm1} is finished by taking the limit $m_0\to\infty$.
\end{proof}
\medskip

\begin{remark}
Most likely, the type of arguments in this section can be used in order to prove Conjecture \ref{conjc-h} for locally projective families $p$. There are however quite a few difficulties to overcome.
\end{remark}


		\section{Proof of Theorem \ref{mainthm}}
		Let $X$ be a compact Kähler manifold, and let $S_0$ be a locally closed submanifold of $X$. Fix a point $x \in S_0$. When $X$ is projective, fix an ample line bundle $A$ on $X$. Bost \cite{Bos04} proved a characterization of the algebraicity of $S_0$ by studying the subspaces $E^i_m \subset H^0(X, mA)$ consisting of sections whose restrictions to $S_0$ vanish to order at least $i$ at $x$. In \cite{Ou}, Ou generalized Bost's nice idea to the setting of compact Kähler manifolds by studying the restricted Lelong number $\nu (\varphi |_{S_0}, x)$, where $\varphi$ is an $\omega_X$-plurisubharmonic function on $X$ for some fixed Kähler metric $\omega_X$.
		For the reader's convenience, we first recall his elegant proof, which follows precisely the same argument as the first part of \cite[Thm 1.2]{Ou}.
		
		\begin{proposition}\label{lelong}
			Let $X$ be a compact Kähler manifold, and let $\omega_X$ be a Kähler metric on $X$. Let $S_0$ be a locally closed submanifold of $X$ such that $\dim S_0 < \dim X$ and the Zariski closure of $S_0$ is $X$. Fix a point $x \in S_0$.  
			
			Then, for any $\lambda > 0$, we can find a $\omega_X$-psh function $\varphi$ on $X$ such that $\varphi$ has analytic singularities and satisfies $\nu (\varphi |_{S_0}, x) \geq \lambda$.
		\end{proposition}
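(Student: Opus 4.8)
The plan is to follow Demailly's mass concentration technique, which is precisely the analytic substitute for Bost's construction of highly vanishing sections. First I would pass to the blow-up $\mu\colon \widetilde{X}\to X$ at the point $x$, with exceptional divisor $E\cong \mathbb{P}^{n-1}$ where $n=\dim X$. Since $\{\omega_X\}$ is a Kähler class on $X$, for small $\varepsilon>0$ the class $\mu^\star\{\omega_X\}-\varepsilon\{E\}$ is still Kähler (or at least big and nef) on $\widetilde{X}$; I will actually want to work with a class of the form $\mu^\star\{\omega_X\}-t\{E\}$ and push $t$ as large as the positivity allows. The point of the blow-up is that imposing vanishing of order $\lambda$ at $x$ for a $\mu^\star\omega_X$-psh function becomes, upstairs, the condition that the associated current dominates $\lambda[E]$, and the room to do this is governed by whether $\{\mu^\star\omega_X\}-\lambda\{E\}$ (suitably scaled) is still pseudo-effective/big — which it is after rescaling $\omega_X$, since we only need the Lelong number to be \emph{comparable} to a fixed multiple of a big class.

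Next I would invoke Demailly's concentration of mass result (from \cite{Dem93}): given the big class $\alpha_t:=\mu^\star\{\omega_X\}-t\{E\}$ on $\widetilde{X}$ for $t$ in the admissible range, there exists a closed positive current $\widetilde{T}\in\alpha_t$ whose generic Lelong number along $E$ is at least $c\cdot t$ for a universal constant $c>0$; one can moreover arrange $\widetilde{T}$ to have analytic singularities by a Demailly-regularization / approximation step, which is important because Proposition \ref{listproperty} and Lemma \ref{controllelong} are stated for currents with analytic singularities. Pushing down, $T:=\mu_\star\widetilde{T}$ is a closed positive $(1,1)$-current in $\{\omega_X\}$ with $\nu(T,x)\geq c\cdot t$, and writing $T=\omega_X+dd^c\varphi$ gives the desired $\omega_X$-psh $\varphi$ with large Lelong number \emph{at $x$ in the ambient space $X$}. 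The remaining, and genuinely delicate, point is to transfer this to the \emph{restricted} Lelong number $\nu(\varphi|_{S_0},x)$: a priori restricting to the proper submanifold $S_0$ could destroy the Lelong number entirely (e.g. if the singular locus of $T$ avoids $S_0$). This is exactly where the hypothesis that the Zariski closure of $S_0$ is all of $X$ must enter.

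To control $\nu(\varphi|_{S_0},x)$ I would use Collins–Tosatti's extension theorem \cite{CT}, as the introduction indicates: because $S_0$ is not contained in any proper analytic subset of $X$ (its Zariski closure being $X$), a positive current on $X$ cannot be ``invisible'' to $S_0$ in a controlled sense — more precisely, one arranges the construction on $\widetilde{X}$ so that the current $\widetilde{T}$ restricts nontrivially to the strict transform of $S_0$, and then uses the Collins–Tosatti comparison together with the Lelong number restriction formula \eqref{restrictionprop} of Hörmann/Siu (item 3 of Proposition \ref{listproperty}) to see that the generic restricted Lelong number along a generic complex line through $x$ inside $S_0$ is still bounded below by a fixed multiple of $t$. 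Concretely, I expect the argument to run: choose a point of $S_0$ and a generic direction, restrict $\widetilde T$ (with analytic singularities) to the strict transform of a generic curve/line, and use that for a.e. direction the restricted Lelong number equals the ambient one — so the bad directions where restriction kills the singularity form a measure-zero set, and since $S_0$ has positive dimension and its Zariski closure is $X$, we can pick a good direction tangent to $S_0$. Finally, taking $t$ (equivalently, rescaling and iterating) so that $c\cdot t\geq\lambda$ yields $\nu(\varphi|_{S_0},x)\geq\lambda$.

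The main obstacle I anticipate is precisely this last transfer step — ensuring the concentrated mass is actually ``seen'' by $S_0$ rather than escaping into directions transverse to it. Demailly's construction produces a current with a singularity at $x$, but controlling its restriction to a fixed submanifold through $x$ requires either (a) building the approximating sections/metrics so that their singularity scheme is compatible with $S_0$, or (b) a clean general principle that a current of mass $m$ concentrated at $x$ must have restricted Lelong number $\gtrsim m$ along $S_0$ when $S_0$ is Zariski-dense. The honest version of (b) is false without the density hypothesis, so the proof has to exploit Zariski-density quantitatively; I would expect the paper to handle this via the blow-up picture plus Collins–Tosatti extension (to pass between $S_0$, its closure, and ambient currents) rather than through a soft argument, and this is the step I would allocate the most care to.
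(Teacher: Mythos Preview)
Your approach has a genuine gap at the very first step, not at the transfer step you worry about. Blowing up a point and asking for $\mu^\star\{\omega_X\}-\lambda\{E\}$ to be pseudo-effective bounds $\lambda$ by a fixed constant depending only on $(X,\omega_X,x)$: if $\varphi$ is $\omega_X$-psh with $\nu(\varphi,x)=c$, then $\mu^\star\{\omega_X\}-c\{E\}$ is pseudo-effective, so $c$ is at most the pseudo-effective threshold. Rescaling does not help, since dividing a $k\omega_X$-psh function by $k$ divides its Lelong number by $k$ as well. So with an isotropic singularity concentrated at a point you can never make even the \emph{ambient} Lelong number at $x$ arbitrarily large, let alone the restricted one.

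The paper's proof exploits precisely this: the ambient Lelong number stays bounded, but the \emph{restricted} one does not have to, because $\dim S_0<\dim X$. One chooses local coordinates with $S_0\subset\{z_1=0\}$ near $x$ and uses the anisotropic model
\[
f=\frac{1}{m^{(n-1)/n}}\log\big(|z_1|^2+|z_2|^{2m}+\cdots+|z_n|^{2m}\big).
\]
This has Monge--Amp\`ere mass $(dd^c f)^n$ bounded independently of $m$, so Demailly's mass concentration yields a global $2\omega_X$-psh $\psi\le Cf+O(1)$ with $C$ uniform. The ambient Lelong number of $f$ at $0$ is only $m^{-(n-1)/n}$ (governed by the light $z_1$-direction), but on $\{z_1=0\}$ the function becomes $m^{-(n-1)/n}\log(|z_2|^{2m}+\cdots)$, whose Lelong number is $m^{1/n}\to\infty$. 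A Demailly regularization then gives $\varphi$ with analytic singularities, and a direct multiplier-ideal computation (Proposition \ref{listproperty}) transfers the bound $\psi\le Cf$ into $\nu(\varphi|_{S_0},x)\gtrsim m^{1/n}$.

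Two further corrections: the Zariski-density of $S_0$ is used only to guarantee that $\varphi|_{S_0}$ is well defined (the polar set of $\varphi$ is a proper analytic subset, hence cannot contain $S_0$); it plays no quantitative role in bounding the restricted Lelong number from below. And Collins--Tosatti is not used here at all --- it enters later, in Proposition \ref{addlemma}, to extend currents from the Zariski closure $M$ to the ambient $X$.
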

		
\begin{proof}

	Let $U$ be a neighborhood of $x$ in $X$, and let $V$ be a neighborhood of $x$ in $S_0$.  
	Since $\dim S_0 < \dim X$, we can find a local holomorphic coordinate system $(z_1, \dots, z_n)$ on $U$ such that $V$ is contained in $\{z_1=0\}$.
	
	Let $m \in \mathbb{N}$. We consider the function  
	$$f = \frac{1}{m^{\frac{n-1}{n}}} \log (|z_1|^2 + |z_2|^{2m} + \cdots + |z_n|^{2m}).$$  
	Since $V$ is contained in $\{z_1=0\}$, we have $\nu (f |_{S_0}, x) = 2 m^{\frac{1}{n}}$.  
	By construction, the mass $(\ddbar f)^n = A \cdot \{x\}$ for some uniform constant $A$ independent of $m$.  
	By applying Demailly's mass concentration theorem \cite[Cor 6.8]{Dem93}, we can find a $2\omega_X$-psh function $\psi$ on $X$ such that  
	$$\psi \leq C \cdot f + O(1)$$  
	in a neighborhood of $x$, where $C$ is a uniform constant independent of $m$.  
	\medskip
	
	Next, we apply Demailly's regularization theorem to $\psi$. We can find a $\omega_X$-psh function $\varphi$ with analytic singularities, and locally, $\varphi$ is of the form  
	$$\varphi = \frac{1}{k} \log \sum |g_i|^2 + O(1),$$  
	where $\int |g_i|^2 e^{-2k\psi} < +\infty$ for some sufficiently large $k$.  
	Now, the key point is that since $S_0$ is dense in $X$ and $\varphi$ is of analytic singularity, $\varphi |_{S_0}$ is well defined.
	
	Now we calculate $\nu (\varphi |_{S_0}, x)$.
	Since $\psi \leq C f$, we know that $g_i \in \mathcal{I} (k C f)$. By Proposition \ref{listproperty}, $\mathcal{I} (k C f)$ is generated by monomials of the form $z_1^{a_1} \cdots z_n^{a_n}$ such that  
	$$a_1 +1 + \sum_{i=2}^n \frac{a_i+1}{m} > \frac{kC}{m^{\frac{n-1}{n}}}.$$
	Thus, for every $g_i$, the vanishing order of $(g_i |_{z_1=0},0)$ is at least  
	$$\sum_{i=2}^n a_i > m^{\frac{1}{n}} kC - n + 1.$$
	By Proposition \ref{listproperty}, we obtain  
	$$\nu (\varphi |_{S_0}, x) \geq m^{\frac{1}{n}} \cdot C - \frac{n-1}{k}.$$  
	The proposition follows by choosing $m$ such that $m^{\frac{1}{n}} \geq \frac{\lambda}{C}$.
\end{proof}

Let $C$ be a compact submanifold of $X$ such that there exists a Zariski open subset $C_0 \subset C$ with $\codim_C (C \setminus C_0) \geq 2$ and $C_0 \subset S_0$. Let $y_0 \in C_0$. We know that $\nu (\varphi |{S_0}, y_0) \geq \nu (\varphi |{S_0}, C_0)$. The following proposition provides a control in another direction. The proof partially follows the idea in the second part of \cite[Thm 1.2]{Ou}. The main difference is that, instead of using the algebraic construction in \cite[Section 5]{Ou} to control the singularity of the currents on $\mathbb{P}(\mathcal{N}^*_{C_0/S_0})$ near its boundary, we use Lemma \ref{controldirectimage} to directly obtain a positive current on the compact manifold $C$.

	\begin{proposition}\label{arbitrarylelong}
		Let $X$ be a compact Kähler manifold and $C \subset X$ be a compact submanifold. Let $S_0$ be a locally closed submanifold containing $C_0$, where $C_0$ is an open subset of $C$ with $\codim_C (C \setminus C_0) \geq 2$.
		
		We fix a point $y_0 \in C_0$. Let $\omega_X$ be a Kähler metric on $X$. Then there exists some constants $K_1$ and $K_2$ depending only on $(\omega_X, y_0, C_0, S_0, X)$ such that, for any $\omega_X$-psh function $\varphi$ on $X$, if $\varphi |_{S_0}$ is of algebraic singularities, then we have
		$$\nu (\varphi |_{S_0}, y_0) \leq K_1 \cdot  \nu (\varphi |_{S_0}, C_0) +K_2 .$$
	\end{proposition}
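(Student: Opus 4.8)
\textbf{Proof proposal for Proposition \ref{arbitrarylelong}.}

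The plan is to reduce the statement to a situation on the compact manifold $C$ to which we can apply the elementary fact that Lelong numbers of a closed positive current are controlled by its cohomology class, and then to use the two Lemmas already established to transfer the bound back and forth. First, I would work locally near $y_0$: choose coordinates on an open set of $S_0$ adapted to $C_0$, so that $C_0$ is cut out by the vanishing of part of the coordinates, and consider the deformation-to-the-normal-cone / blow-up picture. Concretely, let $\pi_0 : \widehat{S_0} \to S_0$ be the blow-up of $S_0$ along $C_0$ with exceptional divisor $E_0 = \mathbb{P}(\mathcal{N}^\star_{C_0/S_0})$, and let $q : E_0 \to C_0$ be the projection. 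Pulling back $\varphi|_{S_0}$, whose singularities are analytic, I would write $\widehat{T} := \pi_0^\star (dd^c \varphi|_{S_0})$; by the last two bullets of Proposition \ref{listproperty}, $\nu(\widehat{T}, \pi_0^{-1}(y_0)) = \nu(\varphi|_{S_0}, y_0)$ and $\nu(\widehat{T}, E_0) = \nu(\varphi|_{S_0}, C_0)$. The point $\pi_0^{-1}(y_0)$ is a whole fiber $\mathbb{P}(\mathcal{N}^\star_{C_0/S_0, y_0}) \subset E_0$; restricting $\widehat{T}$ to $E_0$ (which is legitimate since $\widehat{T}$ has analytic singularities and $E_0$ is not contained in the singular set, because $\varphi|_{S_0}$ is not $\equiv -\infty$), we obtain a closed positive current $\widehat{T}|_{E_0}$ whose Lelong number along the fiber over $y_0$ is at least $\nu(\varphi|_{S_0}, y_0) - \nu(\varphi|_{S_0}, C_0)$ — this is exactly the kind of subtraction appearing in the proof of Lemma \ref{controllelong}.

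Next, I would compactify. The submanifold $C$ is compact, but $C_0 \subsetneq C$ and $\mathcal{N}^\star_{C_0/S_0}$ need not extend as a vector bundle over all of $C$. Here is where $\codim_C(C \setminus C_0) \geq 2$ enters: I would push the current $\widehat{T}|_{E_0}$ wedged with a power of a fixed Kähler metric down to $C_0$ using Lemma \ref{controldirectimage}, obtaining a positive $(1,1)$-current $T'$ on $C_0$ with $\nu(T', y_0) \geq c \cdot (\nu(\varphi|_{S_0}, y_0) - \nu(\varphi|_{S_0}, C_0))$ for a constant $c > 0$ depending only on the geometry (the fiber-integral of the Kähler power, bounded below by compactness after a suitable choice). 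Because $C \setminus C_0$ has codimension $\geq 2$ in $C$, the El Mir extension theorem (first bullet of Proposition \ref{listproperty}) lets me extend $T'$ to a closed positive $(1,1)$-current on the compact manifold $C$; its cohomology class is controlled, but not obviously uniformly in $\varphi$, so I must be careful — I would instead subtract a fixed smooth form and extend $dd^c$ of a quasi-psh potential, arranging the class to lie in a bounded region of $H^{1,1}(C)$ using that $\omega_X$ is fixed and $\varphi$ is $\omega_X$-psh, so that $dd^c\varphi \geq -\omega_X$ and the mass of the relevant currents is a priori bounded by $\{\omega_X\}$-type data. Then the standard bound "on a compact Kähler manifold the Lelong number of a closed positive $(1,1)$-current at any point is $\leq$ (a constant depending on the class) $\cdot$ (the class)" yields $\nu(T', y_0) \leq K$ for a uniform $K$, whence $\nu(\varphi|_{S_0}, y_0) - \nu(\varphi|_{S_0}, C_0) \leq K/c$, which is of the desired form with $K_1 = 1$ and $K_2 = K/c$; allowing $K_1 > 1$ gives slack for the constant $C$ in Lemma \ref{controllelong} if one prefers to route through it rather than redo the blow-up estimate directly.

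The main obstacle I anticipate is the passage from the non-compact $E_0 = \mathbb{P}(\mathcal{N}^\star_{C_0/S_0})$ (whose "boundary" over $C \setminus C_0$ can have codimension $1$ inside the total space $\mathbb{P}(\mathcal{N}^\star_{C_0/S_0})$, even though $C \setminus C_0$ has codimension $\geq 2$ in $C$) to a genuine closed positive current on a compact manifold with uniformly bounded cohomology class. This is precisely the difficulty that \cite{Ou} handles via an algebraic desingularization of $M$ and a careful bookkeeping of the loss of positivity. The analytic shortcut is to avoid ever compactifying $E_0$: instead push forward to $C_0$ \emph{first} (Lemma \ref{controldirectimage} is stated exactly for this), so that the only extension one needs is across $C \setminus C_0 \subset C$, which has codimension $\geq 2$, making El Mir directly applicable. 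The remaining delicate point is then purely quantitative — ensuring the constants $c$ and $K$ depend only on $(\omega_X, y_0, C_0, S_0, X)$ and not on $\varphi$ — which should follow from compactness of $C$ and the normalization built into the definition of $\omega_X$-psh functions, but deserves to be written out carefully; a secondary technical check is that restricting $\widehat{T}$ to $E_0$ and taking its Lelong number along a fiber is valid, which uses crucially that $\varphi|_{S_0}$ has analytic (hence algebraic) singularities, exactly the hypothesis of the proposition.
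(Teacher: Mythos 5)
Your overall route is the same as the paper's: blow up, restrict to the exceptional divisor $E_0 = \mathbb{P}(\mathcal{N}^\star_{C_0/S_0})$ after subtracting $a[E_0]$ with $a = \nu(\varphi|_{S_0},C_0)$, push down to $C_0$ via Lemma~\ref{controldirectimage}, extend across $C\setminus C_0$ by El Mir, and finally appeal to the boundedness of Lelong numbers on the compact manifold $C$ in terms of cohomology class. You also correctly identify that avoiding any compactification of $E_0$ itself — pushing down to $C_0$ first — is the analytic replacement for the algebraic desingularization in \cite{Ou}. Up to that point the proposal is sound.

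The gap is in the very last quantitative step, where you claim the cohomology class of the extended current $T'$ on $C$ can be arranged to be uniformly bounded in $\varphi$, and hence that one may take $K_1=1$. This is false. The current you restrict to $E_0$ is $T_a = T|_{\widehat{S}_0} - a[E_0]$, whose class on $\widehat{S}_0$ is $\pi^\star\{\omega_X\}|_{\widehat{S}_0} - a\,c_1(\mathcal{O}_{\widehat{S}_0}(E_0))$. Since the normal bundle of $E_0$ in $\widehat{S}_0$ is the tautological $\mathcal{O}_{E_0}(-1)$, restriction to $E_0$ turns the subtraction into an \emph{addition}: $T_a|_{E_0}$ lies in the class $\pi^\star\{\omega_X\}|_{E_0} + a\,c_1(\mathcal{O}_{E_0}(1))$. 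Wedging with $\omega_{\widehat X}^r$ and pushing forward then produces a class on $C$ that grows linearly in $a$, and this growth is unavoidable — it is exactly the content of Lemma~\ref{upperbound} in the paper, which bounds $\{T_C\}$ by $(aK_1+K_2)\,c_1(\omega_X|_C)$ rather than by a constant. Consequently the Seshadri/cohomology bound for $\nu(T_C,y_0)$ is $K(aK_1+K_2)$, and plugging this into $\nu(T_C,y_0)\ge \nu(\varphi|_{S_0},y_0)-a$ gives $K_1>1$ in the final inequality for a reason independent of (and prior to) any constant loss in Lemma~\ref{controllelong}. Your proposal should be corrected to track this $a$-dependence explicitly — in effect, you need to supply the content of Lemma~\ref{upperbound} — rather than hoping for a uniform bound, and the remark attributing a possible $K_1>1$ to Lemma~\ref{controllelong} should be dropped.
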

	
	 \begin{proof}
	 	Let $\pi: \widehat{X} \to X$ be the blow-up of $C$ in $X$. Let $\widehat{S}_0$ be the strict transform of $S_0$ and $D$ be the exceptional divisor of $\pi$. Let $E_0 := \widehat{S}_0 \cap D$. Then $E_0 = \mathbb{P}(\mathcal{N}^*_{C_0/S_0})$.
	 	Note that as $\codim_C (C \setminus C_0) \geq 2$, we can extend $\mathcal{N}_{C_0/S_0}$ to be a reflexive subsheaf $\mathcal{F} \subset T_{C/X}$ on $C$.  Then $E_0$ is a Zariski open subset of the compact set $\mathbb{P}(\mathcal{F}^*)$.
	 	
	 	Set $a := \nu(\varphi |_{S_0}, C_0)$. We consider the current $T := \pi^* (\omega_X + \ddbar \varphi) \geq 0$ on $\widehat{X}$. By construction, the restriction $T |_{\widehat{S}_0}$ is well defined. By Proposition \ref{listproperty}, we have
	 	$$ a = \nu(T |_{\widehat{S}_0}, E_0). $$
	 	Now we set $T_a := T |_{\widehat{S}_0} - a [E_0]$. Then $T_a$ is still a positive current on $\widehat{S}_0$, and $T_a |_{E_0}$ is a well-defined positive current on $E_0$. Here we use the fact that $\varphi |_{S_0}$ has algebraic singularities.  
	 	
	 	Let $E$ be a desingularisation of $\mathbb{P}(\mathcal{F}^*)$. Note that $\codim_E (E \setminus E_0)$ might be 1, and the singularity of $T_a |_{E_0}$ might be transcendental near the boundary because of the transcendentality of $S_0$. Therefore, we cannot à priori extend $T_a |_{E_0}$ to be a current on $E$.
	 	
	 	Let $\pi_{E_0}: E_0 = \mathbb{P}(\mathcal{N}^*_{C_0/S_0}) \to C_0$. Let $r$ be the dimension of the generic fiber of $\pi_{E_0}$, and let $\omega_{\widehat{X}}$ be a Kähler metric on $\widehat{X}$. After multiplying by some constant, we can assume that $(\pi_{E_0})_* \left( (\omega_{\widehat{X}} |_{E_0})^r \right) = 1$ at $y_0$. We consider the push-forward
	 	$$ T_{C_0} := (\pi_{E_0})_* \left( T_a |_{E_0} \wedge (\omega_{\widehat{X}} |_{E_0})^r \right). $$
	 	By construction, the $(1,1)$-current $T_{C_0} \geq 0$ on $C_0$. As $\codim_C (C \setminus C_0) \geq 2$, by Proposition \ref{listproperty}, we can extend it to be a positive current on $C$, denoted by $T_C$.
	 	
	 	Thanks to Proposition \ref{controldirectimage}, we have
	 	$$ \nu(T_C, y_0) \geq \nu(T |_{\widehat{S}_0}, \pi_{E_0}^{-1}(y_0)) - a = \nu(\varphi |_{S_0}, y_0) - a, $$
	 	where the second equality comes from Proposition \ref{listproperty}. 
	 	Note that by Lemma \ref{upperbound} below, the cohomology class of $T_C$ is bounded by $ (a K_1 +K_2)  \cdot c_1(\omega_X |_C )$ for some uniform constants $K_1$ and $K_2$. Then $\nu(T_C, y_0)$ is uniformly upper-bounded by $ K \cdot (a K_1 +K_2)$, where $K$ is a constant depending on $c_1 (\omega_X |_C)$ and the Seshadri constant of $y_0$ in $C$.  Therefore, we have
	 	$$ K \cdot ( a K_1 + K_2)  \geq \nu(T_C, y_0) \geq \nu(\varphi |_{S_0}, y_0) - a, $$ 
	 	and the proposition is proved.
	 \end{proof}

		\begin{lemma}\label{upperbound}
			Let $T_C$ be the current on $C$ constructed in the proof of Proposition \ref{arbitrarylelong}. Then there exists a uniform constants $K_1$ and $K_2$ independent of $a$, such that $(a K_1  +K_2 ) \cdot c_1 (\omega_X |_C) - T_C$ is a psef class.
		\end{lemma}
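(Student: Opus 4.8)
The plan is to pin down the cohomology class of $T_C$ exactly and to show that it is \emph{affine} in $a$, namely that
\[
\{T_C\}\;=\;\{\omega_X|_C\}\,+\,a\,\theta
\]
for a fixed class $\theta\in H^{1,1}(C,\mathbb{R})$ depending only on the data $(E,\mathcal O_E(1),\omega_{\widehat X})$ fixed in the proof of Proposition \ref{arbitrarylelong}. Granting this, the lemma follows at once: since $\{\omega_X|_C\}$ is Kähler we may choose $K_1\gg 0$ so that $K_1\{\omega_X|_C\}-\theta$ is Kähler (a fortiori psef), and set $K_2:=1$; then for every $a=\nu(\varphi|_{S_0},C_0)\ge 0$ we get
\[
(aK_1+K_2)\,c_1(\omega_X|_C)-\{T_C\}\;=\;a\bigl(K_1\{\omega_X|_C\}-\theta\bigr),
\]
a non-negative multiple of a psef class, hence psef.

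To obtain the identity I would first decompose $T_a$. Recall that $E_0=\widehat S_0\cap D=\mathbb P(\mathcal N^*_{C_0/S_0})$ is a smooth divisor in $\widehat S_0$ with $N^*_{E_0/\widehat S_0}=\mathcal O_{\widehat S_0}(-E_0)|_{E_0}$, and that over $C_0$ this bundle is precisely the restriction of $\mathcal O_E(1)$ to $E_0\subset E$ (as $\mathbb P(\mathcal N^*_{C_0/S_0})=\mathbb P(\mathcal F^*)|_{C_0}$). Fixing a smooth metric $h_E$ on $\mathcal O_E(1)$, extending its dual to a smooth metric $h$ on $\mathcal O_{\widehat S_0}(E_0)$, and writing $[E_0]=c_1(\mathcal O_{\widehat S_0}(E_0),h)+\ddbar\log|\mathbf s_{E_0}|_h$, on $\widehat S_0$ we obtain
\[
T_a\;=\;T|_{\widehat S_0}-a[E_0]\;=\;\Bigl(\pi^*\omega_X|_{\widehat S_0}-a\,c_1(\mathcal O_{\widehat S_0}(E_0),h)\Bigr)+\ddbar\psi_a,\qquad \psi_a:=\pi^*\varphi|_{\widehat S_0}-a\log|\mathbf s_{E_0}|_h .
\]
Because $\nu(T_a,E_0)=0$, the weight $\psi_a$ is not $\equiv-\infty$ on $E_0$, so restricting to $E_0$ is admissible and yields
\[
T_a|_{E_0}\;=\;\Gamma_a|_{E_0}+\ddbar\bigl(\psi_a|_{E_0}\bigr),\qquad \Gamma_a:=\pi_E^*(\omega_X|_C)+a\,c_1(\mathcal O_E(1),h_E),
\]
where $\Gamma_a$ is now a \emph{global} closed smooth $(1,1)$-form on $E$, so $\{\Gamma_a\}=\pi_E^*\{\omega_X|_C\}+a\,c_1(\mathcal O_E(1))$ is manifestly affine in $a$; moreover $\psi_a|_{E_0}$ is $\eta_a$-psh on $E_0$ for some $\eta_a\le c_0(1+a)\,\omega_{\widehat X}|_{E_0}$ that is the restriction to $E_0$ of a global form on $E$.

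Next I would push forward along $\pi_{E_0}\colon E_0\to C_0$, using that $\omega_{\widehat X}$ is closed so that $\ddbar(\psi_a|_{E_0})\wedge\omega_{\widehat X}^{\,r}=\ddbar\bigl(\psi_a|_{E_0}\cdot\omega_{\widehat X}^{\,r}\bigr)$. Setting $g:=(\pi_{E_0})_*\bigl(\psi_a|_{E_0}\cdot(\omega_{\widehat X}|_{E_0})^{\,r}\bigr)$ we get, on $C_0$,
\[
T_{C_0}\;=\;(\pi_E)_*\bigl(\Gamma_a\wedge(\omega_{\widehat X}|_E)^{\,r}\bigr)\big|_{C_0}\;+\;\ddbar g .
\]
Here $g$ is a quasi-psh function on $C_0$ — a fibre integral of a quasi-psh function against a positive closed smooth form — and this is the single point at which the possibly transcendental nature of $S_0$ is absorbed: we never need to control $T_a|_{E_0}$ near its boundary $E\setminus E_0$, only its push-forward over the good locus $C_0$. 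Since $\codim_C(C\setminus C_0)\ge 2$, the function $g$ extends to a quasi-psh function $\widetilde g$ on $C$ by the extension statements of Proposition \ref{listproperty}. Thus the closed current $(\pi_E)_*\bigl(\Gamma_a\wedge(\omega_{\widehat X}|_E)^{\,r}\bigr)+\ddbar\widetilde g$ on $C$ agrees with $T_C$ on $C_0$; since $H^{1,1}(C)\to H^{1,1}(C_0)$ is injective (again because $\codim_C(C\setminus C_0)\ge 2$) and $\ddbar\widetilde g$ is exact, we conclude
\[
\{T_C\}\;=\;\bigl\{(\pi_E)_*\bigl(\Gamma_a\wedge(\omega_{\widehat X}|_E)^{\,r}\bigr)\bigr\}\;=\;(\pi_E)_*\bigl(\{\Gamma_a\}\wedge\{\omega_{\widehat X}|_E\}^{\,r}\bigr)\;=\;\{\omega_X|_C\}+a\,\theta,
\]
where $\theta:=(\pi_E)_*\bigl(c_1(\mathcal O_E(1))\wedge\{\omega_{\widehat X}|_E\}^{\,r}\bigr)$ and we used the normalization $(\pi_E)_*\bigl(\{\omega_{\widehat X}|_E\}^{\,r}\bigr)=1$. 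The step I expect to require the most care is the justification that $g$ really is a quasi-psh function on $C_0$ (equivalently, that $\psi_a|_{E_0}$ restricts to an $L^1$ function on the generic fibre of $\pi_{E_0}$) and that the resulting closed current extends across the codimension $\ge 2$ locus $C\setminus C_0$; the remaining manipulations are routine bookkeeping with the Gysin morphism.
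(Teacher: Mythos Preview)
Your proposal is correct and follows essentially the same route as the paper's proof. Both arguments (i) rewrite $T_a$ on $\widehat S_0$ as a smooth representative of the class $\pi^*\{\omega_X\}-a\,c_1(\mathcal O(E_0))$ plus $\ddbar$ of a quasi-psh potential, (ii) restrict to $E_0$, (iii) wedge with $(\omega_{\widehat X})^r$ and push forward to $C_0$, and (iv) use $\codim_C(C\setminus C_0)\ge 2$ to pass from $C_0$ to $C$ at the level of cohomology. The only cosmetic differences are that the paper fixes the smooth metric globally on $\mathcal O_{\widehat X}(D)$ and restricts (rather than choosing $h_E$ on $\mathcal O_E(1)$ and patching back to $\widehat S_0$), and that the paper merely bounds the resulting class $\beta$ by $(aK_1+K_2)\,c_1(\omega_X|_C)$, whereas you compute it exactly as $\{\omega_X|_C\}+a\,\theta$; the paper's formulation spares one from checking the normalisation $(\pi_E)_*\{\omega_{\widehat X}|_E\}^r=1$, but your exact formula is of course sufficient. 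Your flagged ``delicate'' step is handled in the paper in the same way you suggest: one does not actually need the potential $g$ (the paper's $\Phi$) to extend as a function, only that the closed $(1,1)$-current $\ddbar g$ extends with trivial class, which follows from the El~Mir/support theorems in Proposition~\ref{listproperty} since both $T_{C_0}$ and the smooth pushforward from $E$ extend separately across codimension~$\ge 2$.
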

		
		\begin{proof}
			Let $\alpha := c_1 (\pi^* \omega_X ) - a \cdot c_1 (\mathcal{O}_{\widehat{X}} (D)) \in H^{1,1} (\widehat{X}, \mathbb R)$. Let $\rho : E \to\mathbb P (\mathcal F^*) $ be a desingularization and $\pi_E : E \to C$ be the projection. Consider the class
			$$
			\beta :=(\pi_E)_* \Big(\rho^* 
			\big(\big(\alpha  \wedge  (\omega_{\widehat{X}} )^r \big)|_{\mathbb P (\mathcal F^*)} \big)\Big) .
			$$
			Note that  $\beta$ is a $(1,1)$-class on $C$ and is upper bounded by $(a  K_1 +K_2) \cdot c_1 (\omega_X |_C)$ for some uniform constants $K_1$ and $K_2$ independent of $a$. To prove the lemma, it suffices to show that 
			\begin{equation}\label{neeprove}
				c_1 (\beta) = c_1 (T_C) \qquad\text{on } C.
			\end{equation}
			
			Let $s_D \in H^0 (\widehat{X}, \mathcal{O}_{\widehat{X}} (D))$ be the canonical section (which vanishes along $D$), and fix a smooth metric $h_D$ on $\mathcal{O}_{\widehat{X}} (D)$. Consider the current 
			$$
			T_{\widehat{X}}:=\pi^* \omega_X +\ddbar \varphi - a \ddbar \log |s_D|^2_{h_D} - a i\Theta_{h_D} (\mathcal{O}_{\widehat{X}} (D)) \qquad\text{on } \widehat{X}.
			$$
			Then we have $c_1 (T_{\widehat{X}}) = \alpha$ and $T_{\widehat{X}} |_{\widehat{S}_0} =T_a$ on  $\widehat{S}_0$. 
			
			Set $A:= \pi^* \omega_X - a i\Theta_{h_D} (\mathcal{O}_{\widehat{X}} (D))$. Then $A$ is a smooth form in the class of $\alpha$, and we have
			$$
			A |_{E_0} = T_a |_{E_0} + \ddbar \psi \qquad\text{on } E_0,
			$$ 
			where $\psi = ((\varphi - a \log |s_D|^2_{h_D} ) |_{\widehat{S}_0} ) |_{E_0}$ is well-defined on $E_0$. As a consequence,
			$$
			A |_{E_0} \wedge \big(\omega_{\widehat{X}} |_{E}\big)^r = T_a |_{E_0} \wedge \big(\omega_{\widehat{X}} |_{E}\big)^r + \ddbar (\psi \cdot \big(\omega_{\widehat{X}} |_{E}\big)^r) \qquad\text{on } E_0.
			$$
			Therefore, we have 
			\begin{equation}\label{current}
				p_* (A |_{E_0} \wedge \big(\omega_{\widehat{X}} |_{E}\big)^r) = T_{C_0} + \ddbar \Phi \qquad\text{on } C_0
			\end{equation}
			for some function $\Phi$ defined on $C_0$. Since $\codim_C (C \setminus C_0) \geq 2$, by Proposition \ref{listproperty}, the equality \eqref{current} extends to the whole space $C$. Note that the class of the extended current on the left-hand side of \eqref{current} is precisely $\beta$. This completes the proof of \eqref{neeprove}.
		\end{proof}

		\medskip
		
		Now we are going to prove Theorem \ref{mainthm}. In the setting of Theorem \ref{mainthm}, we fix a point $y_0 \in C_0$. If $M$ is smooth, then by applying Proposition \ref{lelong} to $(M, S_0, y_0)$, we can construct a $\omega_X$-psh function $\varphi$ on $M$ such that $\nu (\varphi|_{S_0}, y_0)$ is arbitrarily large. 
		By applying the extension theorem \cite[Thm 1.1]{CT}, we obtain a $\omega_X$-psh function $\varphi$ on $X$ such that $\nu (\varphi|_{S_0}, y_0)$ is arbitrarily large. Together with Proposition \ref{arbitrarylelong}, we conclude that $\nu (\varphi|_{S_0}, C_0)$ is arbitrarily large. This implies that $\mathcal{N}^* _{C_0/S_0}$ is psef (cf. the proof of Theorem \ref{mainthm}). In the general case, when $M$ is not necessarily smooth, we have the following proposition.
		
		\begin{proposition}\label{addlemma}
			In the setting of Theorem \ref{mainthm}, let $\pi: \widehat{X} \to X$ be the blow-up of $C$ in $X$. Let $\widehat{S}_0$ be the strict transform of $S_0$, and let $D$ be the exceptional divisor of $\pi$. Define $E_0 := \widehat{S}_0 \cap D$. Let $\omega_{\widehat{X}}$ be a Kähler metric on $\widehat{X}$.  
			
			If $\dim M > \dim S_0$, then for any $m \in\mathbb{N}$, there exists a $\omega_{\widehat{X}}$-psh function $\varphi_m$ on $\widehat{X}$ such that $\varphi_m |_{\widehat{S}_0}$ has analytic singularities and $\nu (\varphi_m |_{\widehat{S}_0}, E_0) \geq m$.  
		\end{proposition}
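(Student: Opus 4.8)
The plan is to reduce to the case where $M$ is smooth, which was just treated, by passing to an \emph{embedded} resolution of $M$ inside $X$, running the construction of Proposition \ref{lelong} upstairs, and then transferring the resulting potential back down to $X$, the loss of Lelong number being controlled by Lemma \ref{controllelong}. The final object will live on $X$ rather than on $\widehat X$: indeed, once we produce an $\omega_X$-psh function $\varphi$ on $X$ with analytic singularities and with $\nu(\varphi|_{S_0},C_0)$ as large as we wish, the function $\varphi_m:=\tfrac1N\pi^*\varphi$ — where $N$ is fixed so that $\tfrac1N\pi^*\omega_X\le\omega_{\widehat X}$ — is $\omega_{\widehat X}$-psh, has analytic singularities on $\widehat S_0$, and satisfies $\nu(\varphi_m|_{\widehat S_0},E_0)=\tfrac1N\,\nu(\varphi|_{S_0},C_0)$ by the invariance of generic Lelong numbers under blow-ups recorded in Proposition \ref{listproperty}, $E_0$ being the exceptional divisor of the blow-up $\widehat S_0\to S_0$ of $S_0$ along $C_0$. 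So it suffices to make $\nu(\varphi|_{S_0},C_0)\ge Nm$.

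First I would pick a composition of blow-ups of $X$ along smooth (necessarily codimension $\ge 2$) centers $\rho\colon X'\to X$ such that the strict transform $M'\subset X'$ of $M$ is a submanifold; then $X'$ and $M'$ are compact Kähler, and I would choose $\omega_{X'}$ Kähler with $\{\omega_{X'}\}=\rho^*\{\omega_X\}-\varepsilon\sum_i[E_i]$ ($E_i$ the $\rho$-exceptional divisors, $\varepsilon$ small), so that $\rho_*$ of any positive current in $\{\omega_{X'}\}$ lies in $\{\omega_X\}$. Let $S_0'\subset M'$ be the strict transform of $S_0$: it is again an irreducible, locally closed submanifold, Zariski-dense in $M'$ (because $S_0$ is Zariski-dense in $M$ and $\rho$ is an isomorphism over a dense Zariski open set), with $\dim S_0'=\dim S_0<\dim M=\dim M'$. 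Fix $\widehat y_0\in S_0'$ over $y_0$. Proposition \ref{lelong} applied to $(M',S_0',\widehat y_0)$ then yields, for any prescribed $\lambda$, an $\omega_{X'}|_{M'}$-psh function $\psi$ on $M'$ with analytic singularities and $\nu(\psi|_{S_0'},\widehat y_0)\ge\lambda$; since $M'$ is a submanifold of the compact Kähler manifold $X'$, the extension theorem \cite[Thm 1.1]{CT} produces an $\omega_{X'}$-psh function $\Psi$ on $X'$ with analytic singularities whose restriction to $M'$ is at least as singular as $\psi$, hence $\nu(\Psi|_{S_0'},\widehat y_0)\ge\lambda$.

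Next I would descend to $X$: push the Kähler current $R:=\omega_{X'}+dd^c\Psi$ forward by $\rho$; by the choice of $\omega_{X'}$ this gives an $\omega_X$-psh function $\varphi$ on $X$ with analytic singularities. To bound $\nu(\varphi|_{S_0},y_0)$ from below I would apply Lemma \ref{controllelong} — or directly \cite[Thm 2]{Fav} — to the modification $\rho|_{S_0'}\colon S_0'\to S_0$ and the current $R|_{S_0'}$, obtaining
\[
\nu(\varphi|_{S_0},y_0)\ \ge\ \frac1C\Big(\nu\big(R|_{S_0'},\widehat y_0\big)-\sum_{E}\nu\big(R|_{S_0'},E\big)\Big)\ \ge\ \frac{\lambda-C'}{C},
\]
where $E$ runs over the finitely many $\rho$-exceptional divisors through $\widehat y_0$, $C$ depends only on the resolution, and $C'$ is a uniform bound for the Lelong numbers of $R|_{S_0'}$ along those fixed divisors — uniform in $\lambda$ because $R|_{S_0'}$ lies in the fixed cohomology class $\{\omega_{X'}\}|_{S_0'}$. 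Taking $\lambda$ large enough we arrange $\nu(\varphi|_{S_0},y_0)\ge K_1 Nm+K_2$, with $K_1,K_2$ the constants of Proposition \ref{arbitrarylelong} attached to $(\omega_X,y_0,C_0,S_0,X)$; that proposition, applied to the original data and to $\varphi$, then gives $\nu(\varphi|_{S_0},C_0)\ge Nm$, and the first paragraph concludes.

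I expect the third step, the descent to $X$, to be the main obstacle: blowing down can absorb Lelong mass into the exceptional locus, so one must check that the huge Lelong number fabricated at $\widehat y_0$ genuinely survives. Lemma \ref{controllelong} is tailored precisely to this, and the argument works only because the possible loss is controlled by the Lelong numbers of a current in a \emph{fixed} class along a \emph{fixed, finite} set of divisors, hence stays bounded while $\lambda\to\infty$. Two smaller technical points need care: that $\rho|_{S_0'}\colon S_0'\to S_0$ is a genuine modification (one may factor it through blow-ups of submanifolds, iterating Lemma \ref{controllelong}, or invoke \cite{Fav} directly), and that $\rho_*$ commutes with restriction to $S_0'$ at $\widehat y_0$ up to the discrepancies already measured by Lemma \ref{controllelong}; alternatively, one can avoid the embedded resolution altogether and instead combine an abstract resolution $\widetilde M\to M$ with the extension theorem applied to the singular subvariety $M\subset X$.
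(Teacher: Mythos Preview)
Your overall architecture is the same as the paper's, but the descent step contains a real gap. You claim that the loss term $\sum_E \nu(R|_{S_0'},E)$ is uniformly bounded in $\lambda$ ``because $R|_{S_0'}$ lies in the fixed cohomology class $\{\omega_{X'}\}|_{S_0'}$.'' This reasoning fails: $S_0'$ is only locally closed, not compact, so belonging to a fixed class does not bound Lelong numbers. Worse, the entire mechanism of Proposition~\ref{lelong} is precisely that \emph{restricted} Lelong numbers $\nu(R|_{S_0'},\cdot)$ can be made enormous while the ambient numbers $\nu(R,\cdot)$ on the compact $X'$ stay bounded; there is no reason the same amplification cannot occur along the exceptional divisors $E\cap S_0'$ rather than only at the chosen point $\widehat y_0$. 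Choosing $y_0\in C_0$ generically does not rescue this either, since nothing prevents $C_0\subset M_{\rm sing}$, in which case every $y_0$ lies under the exceptional locus of $\rho$.

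The paper's proof sidesteps this by reversing the order of operations: it blows up $C$ \emph{first}, then resolves $\widehat M$ inside $\widehat X$. Because $E_0$ has codimension one in the smooth $\widehat S_0$, the induced bimeromorphism $p:\widetilde S_0\to\widehat S_0$ is an isomorphism near a \emph{generic} point $\widetilde y_0\in\widetilde E_0$ (this is Lemma~\ref{comparelelong}), so the restricted Lelong number survives the descent $\widetilde X\to\widehat X$ up to a fixed additive constant. For the remaining descent $\widehat X\to X$ along the single blow-up $\pi$, the paper does \emph{not} claim that $\nu(\widehat T_m|_{\widehat S_0},E_0)$ is bounded; instead it runs a dichotomy: either this quantity is already $\ge m/2$ (and we are done), or it is $\le m/2$, and then the difference $\nu(\widehat T_m|_{\widehat S_0},\widehat y_0)-\nu(\widehat T_m|_{\widehat S_0},E_0)$ is large, which is exactly the input Lemma~\ref{controllelong} needs. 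That dichotomy is the missing ingredient in your argument.
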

		
		\begin{proof}
			Let $\widehat{M} \subset \widehat{X}$ be the strict transform of $M$. We desingularize the singular locus of $\widehat{M}$ and obtain a bimeromorphism $p: \widetilde{X} \to \widehat{X}$. Let $\widehat{S}_0$ be the strict transform of $S_0$ in $\widehat{X}$. Since $S_0$ is dense in $M$, $\widehat{S}_0$ is not contained in the singular locus of $\widehat{M}$. Let $\widetilde{S}_0 \subset \widetilde{X}$ be the strict transform of $\widehat{S}_0$. Define $\widetilde{E}_0 \subset \widetilde{S}_0$ as the unique component of $p^{-1} (E_0) \cap \widetilde{S}_0$ such that $p (\widetilde{E}_0) = E_0$. The uniqueness follows from the fact that $E_0$ has codimension $1$ in $\widehat{S}_0$ and that $\widehat{S}_0$ is smooth. We have the following diagram:
			
			\[
			\begin{tikzcd}
				\widetilde{y}_0\in 	\widetilde{E}_0\arrow[r, hookrightarrow] \arrow[d, "p"] & \widetilde{S}_0 \arrow[r, hookrightarrow] \arrow[d, "p"] & \widetilde{X}\arrow[d, "p"] \\
				\widehat{y}_0\in 	E_0 \arrow[r, hookrightarrow] \arrow[d, "\pi"] & \widehat{S}_0 \arrow[r, hookrightarrow] \arrow[d, "\pi"] & \widehat{X} \arrow[d, "\pi"] \\
				y_0\in C_0 \arrow[r, hookrightarrow] & S_0 \arrow[r, hookrightarrow] & X
			\end{tikzcd}
			\]
			
			Let $\widetilde{y}_0 \in \widetilde{E}_0$ be a generic point. Since $E_0 \subset \widehat{S}_0$ has codimension one, the bimeromorphism $p: \widetilde{S}_0 \to \widehat{S}_0$ is an isomorphism in a neighborhood of the generic point $\widetilde{y}_0$. In particular, $\widetilde{S}_0$ is smooth in a neighborhood of $\widetilde{y}_0$.  Set $\widehat{y}_0 := p (\widetilde{y}_0)$ and $y_0 := \pi (\widehat{y}_0)$.  
			Now, we fix a Kähler metric $\omega_X$ on $X$. By using \cite{DP}, there exist Kähler metrics $\omega_{\widetilde{X}}$ on $\widetilde{X}$ and $\omega_{\widehat{X}}$ on $\widehat{X}$ such that
			\[
			p_* (c_1 (\omega_{\widetilde{X}})) = c_1 (\omega_{\widehat{X}}) \quad \text{and} \quad \pi_* (c_1 (\omega_{\widehat{X}})) = c_1 (\omega_X).
			\]
			
			Since $\widetilde{M}$ is smooth and $\widetilde{S}_0$ is smooth in a neighborhood of $\widetilde{y}_0$, we can apply Proposition \ref{lelong} to $(\widetilde{M}, \widetilde{S}_0, \widetilde{y}_0)$.  
			Together with the extension theorem \cite[Thm 1.1]{CT}, for any $m > 0$, we can find a $\omega_{\widetilde{X}}$-psh function $\widetilde{\varphi}_m$ on $\widetilde{X}$ such that $\widetilde{\varphi}_m |_{\widetilde{S}_0}$ has analytic singularities and  
			\[
			\nu (\widetilde{\varphi}_m |_{\widetilde{S}_0}, \widetilde{y}_0) \geq m.
			\]
			Set $\widetilde{T}_m := \omega_{\widetilde{X}} +\ddbar \widetilde{\varphi}_m \geq 0$. Then  
			\[
			\nu (\widetilde{T}_m |_{\widetilde{S}_0},  \widetilde{y}_0) \geq m.
			\]
			
			We consider the push-forward  
			\[
			\widehat{T}_m := p_* (\widetilde{T}_m), \quad T_m := (p\circ\pi)_* \widetilde{T}_m.
			\]
			Then  
			\[
			c_1 (\widehat{T}_m) = c_1 (\omega_{\widehat{X}}), \quad c_1 (T_m) = c_1 (\omega_X),
			\]
			and the restrictions $\widehat{T}_m |_{\widehat{S}_0}$ and $T_m |_{S_0}$ are well defined.  
			By using Lemma \ref{comparelelong}, there exists a constant $C_1$ independent of $m$ such that  
			\[
			\left| \nu (\widehat{T}_m |_{\widehat{S}_0},  \widehat{y}_0) -\nu (\widetilde{T}_m |_{\widetilde{S}_0},  \widetilde{y}_0) \right| \leq C_1.
			\]
			Therefore, we have  
			\[
			\nu (\widehat{T}_m |_{\widehat{S}_0},  \widehat{y}_0) \geq m - C_1.
			\] 
			
			There are two possibilities:
			
			- If $\nu (\widehat{T}_m |_{\widehat{S}_0},  E_0) \geq \frac{m}{2}$ for infinitely many $m \in \mathbb{N}^*$, then the proposition is proved.
			
			- If $\nu (\widehat{T}_m |_{\widehat{S}_0},  E_0) \leq \frac{m}{2}$ for every sufficiently large $m$, then, thanks to Lemma \ref{controllelong}, we have  
			\[
			\nu (T_m |_{S_0}, y_0) \geq\frac{1}{K} \cdot \left(\nu (\widehat{T}_m |_{\widehat{S}_0},  \widehat{y}_0) - \nu (\widehat{T}_m |_{\widehat{S}_0},  E_0) \right) \geq  \frac{1}{K} \cdot \left(\frac{m}{2}-C_1\right)
			\]
			for some uniform constant $K$ independent of $m$. By applying Proposition \ref{arbitrarylelong}, we obtain  
			\[
			\nu (T_m |_{S_0}, C_0) \geq \frac{\nu (T_m |_{S_0}, y_0) -K_2}{K_1}  \geq  \widetilde{K}_1 m - \widetilde{K}_2 .
			\]
			for some uniform constants $\widetilde{K}_1 >0$ and $\widetilde{K}_2$ independent of $m$.
			We write $T_m = \omega_X +\ddbar \varphi_m$ on $X$. Then $\pi^*\varphi_m$ satisfies the proposition.
		\end{proof}

		\begin{lemma}\label{comparelelong}
			There exists a constant $C_1$ independent of $m$ such that 
			\[
			|\nu (\widehat{T} |_{\widehat{S}_0},  \widehat{y}_0) -\nu (\widetilde{T} |_{\widetilde{S}_0},  \widetilde{y}_0)| \leq C_1.
			\]
		\end{lemma}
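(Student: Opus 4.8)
The plan is to move the comparison onto the exceptional divisor of $p$, using the fact established above that $p$ restricts to a biholomorphism from a neighbourhood of $\widetilde y_0$ in $\widetilde S_0$ onto a neighbourhood of $\widehat y_0$ in $\widehat S_0$.

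First I would work on the total spaces. Since $p\colon \widetilde X\to\widehat X$ is a modification, the closed positive $(1,1)$-currents $\widetilde T$ and $p^*\widehat T=p^*p_*\widetilde T$ agree on $\widetilde X\setminus\mathrm{Exc}(p)$, where $\mathrm{Exc}(p)=\bigcup_j E_j$ is the union of the prime $p$-exceptional divisors. Hence $R:=\widetilde T-p^*\widehat T$ is a closed $(1,1)$-current of order zero (a difference of positive currents) supported on $\bigcup_j E_j$, so by the support theorem for closed currents of order zero one has $R=\sum_j a_j\,[E_j]$ for some real numbers $a_j$. Its cohomology class is $\{R\}=\{\omega_{\widetilde X}\}-p^*\{\omega_{\widehat X}\}$ (using $\{\widetilde T\}=\{\omega_{\widetilde X}\}$ and $p_*\{\omega_{\widetilde X}\}=\{\omega_{\widehat X}\}$), which is the \emph{same} for all $m$; since the classes $\{E_j\}$ of the prime $p$-exceptional divisors are linearly independent in $H^{1,1}(\widetilde X,\mathbb R)$ (negativity of exceptional divisors), the $a_j$ are uniquely determined by $\{R\}$ and are therefore independent of $m$. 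In particular $R$ is literally the same current for every $m$.

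Next I would restrict to $\widetilde S_0$. Choosing, near $\widehat y_0$ and $\widetilde y_0$, local potentials $\widehat u$ of $\widehat T$ and $\widetilde u$ of $\widetilde T$, and a local equation $s_j$ of $E_j$, the identity $R=\sum_j a_j[E_j]$ reads $\widetilde u=\widehat u\circ p+\sum_j a_j\log|s_j|^2+h$ with $h$ pluriharmonic (hence smooth). As $\widetilde S_0$ is the strict transform, it is not contained in any $E_j$, so restricting this to $\widetilde S_0$ is legitimate and yields, near $\widetilde y_0$,
\[
\widetilde u\big|_{\widetilde S_0}=\big(\widehat u\big|_{\widehat S_0}\big)\circ\big(p\big|_{\widetilde S_0}\big)+\sum_j a_j\,\log\big|s_j|_{\widetilde S_0}\big|^2+h\big|_{\widetilde S_0};
\]
in particular the right-hand side is not $\equiv-\infty$, so $\widehat T|_{\widehat S_0}$ is well defined near $\widehat y_0$ and $\nu(\widehat T|_{\widehat S_0},\widehat y_0)=\nu(\widehat u|_{\widehat S_0},\widehat y_0)$. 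Taking Lelong numbers at $\widetilde y_0$, using additivity of $\nu$, the biholomorphy of $p|_{\widetilde S_0}$ near $\widetilde y_0$ (so that the first term contributes $\nu(\widehat T|_{\widehat S_0},\widehat y_0)$), and the fact that smooth terms contribute $0$, I obtain
\[
\nu\big(\widetilde T|_{\widetilde S_0},\widetilde y_0\big)=\nu\big(\widehat T|_{\widehat S_0},\widehat y_0\big)+2\sum_j a_j\,m_j,\qquad m_j:=\mathrm{ord}_{\widetilde y_0}\big(s_j|_{\widetilde S_0}\big)\in\mathbb Z_{\ge0}.
\]
Hence $C_1:=2\sum_j|a_j|\,m_j$ works; it depends only on $p$, $\widetilde S_0$, $\widetilde y_0$ and the fixed Kähler classes, not on $m$.

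The main obstacle is the structural input of the second paragraph: realizing $R$ as an honest combination $\sum_j a_j[E_j]$ of the exceptional divisors (support theorem for closed order-zero $(1,1)$-currents supported on a hypersurface) and checking that the coefficients $a_j$ do not move with $m$ (linear independence of the classes of the prime exceptional divisors, i.e. the Kähler analogue of the negativity lemma). A secondary, more routine point is to make sure that $\widehat T|_{\widehat S_0}$ and $\widetilde T|_{\widetilde S_0}$ are well defined near the relevant points; this follows from the displayed identity together with the fact that $\widetilde\varphi_m|_{\widetilde S_0}$ has analytic singularities.
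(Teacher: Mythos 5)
Your argument is correct, but it takes a genuinely different route from the paper's, and it is worth contrasting the two. You work on $\widetilde X$ and pull back, decomposing $R = \widetilde T - p^*\widehat T$ as a fixed combination $\sum_j a_j[E_j]$ of exceptional divisors; to conclude that the coefficients $a_j$ are independent of $m$ you invoke, beyond the support theorem, the linear independence of the exceptional classes (the K\"ahler analogue of the negativity lemma). The paper instead works on $\widehat X$ and pushes forward: since $p_*\omega_{\widetilde X}$ is a \emph{fixed} closed positive current cohomologous to $\omega_{\widehat X}$, the $\ddbar$-lemma produces a fixed quasi-psh function $\Phi$ with $p_*\omega_{\widetilde X}=\omega_{\widehat X}+\ddbar\Phi$, and then $p_*\widetilde T=\widehat T$ gives $p_*\widetilde\varphi=\psi-\Phi+C_0$ at once, so the comparison of Lelong numbers along $\widehat S_0$ at $\widehat y_0$ follows with $C_1$ controlled by the single fixed function $\Phi$. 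Both arguments identify the same discrepancy, but the paper's formulation is a bit more economical in that it never needs the negativity lemma or the uniqueness of the $a_j$: the fact that $p_*\omega_{\widetilde X}$, $\omega_{\widehat X}$, and hence $\Phi$, are independent of $m$ is immediate. Your version, on the other hand, makes the exceptional contribution explicit ($C_1=2\sum_j|a_j|\,m_j$), which is a pleasant byproduct; just make sure to flag (or cite) the K\"ahler negativity lemma where you use it, as you yourself note.
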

		
		\begin{proof}
			Let $\sum D_i$ be the exceptional divisor of $p$. By construction, we have  
			\[
			c_1 (\omega_{\widetilde{X}})= p^* c_1 (\omega_{\widehat{X}}) +\sum a_i [D_i]
			\]
			for some constants $a_i$. 
			Then, we can write $\widetilde{T} =\omega_{\widetilde{X}} + \ddbar \widetilde{\varphi}$ and $\widehat{T} =\omega_{\widehat{X}} + \ddbar \psi$ for some function $\psi$ on $\widehat{X}$.
			Since $c_1 (p_* (\omega_{\widetilde{X}})) =c_1 (\omega_{\widehat{X}} )$, there exists a function $\Phi$ on $\widehat{X}$ such that  
			\[
			p_* (\omega_{\widetilde{X}}) =\omega_{\widehat{X}} +\ddbar \Phi.
			\]
			As a consequence, we have the pointwise identity 
			\[
			p_* ( \widetilde{\varphi}) = \psi -\Phi +C_0 \qquad \text{on } \widehat{X} 
			\]
			for some constant $C_0$.
			
			Since $p |_{\widetilde{S}_0}$ is an isomorphism near $\widetilde{y}_0$, we can find a neighborhood $\widetilde{U} \subset \widetilde{S}_0$ of $\widetilde{y}_0$ and a neighborhood $\widehat{U} \subset \widehat{S}_0$ of $\widehat{y}_0$ such that
			$p|_{\widetilde{U}}:  \widetilde{U} \to \widehat{U}$ is an isomorphism. Consequently, at the level of quasi-psh functions, we have the identity  
			\[
			(\psi -\Phi +C_0) \circ (p|_{\widetilde{U}}) =\widetilde{\varphi}  \qquad \text{on } \widetilde{U}.
			\]
			Thus, we obtain 
			\[
			\nu ((\psi -\Phi +C_0)|_{\widehat{S}_0}, \widehat{y}_0 )= \nu (\widetilde{\varphi}  |_{\widetilde{S}_0}, \widetilde{y}_0).
			\]
			Since $\Phi$ depends only on the blow-up and the coefficients $a_i$, and is well-defined on $\widehat{S}_0$, we conclude that  
			\[
			|\nu (\psi |_{\widehat{S}_0}, \widehat{y}_0 )- \nu (\widetilde{\varphi}  |_{\widetilde{S}_0}, \widetilde{y}_0)| \leq C_1
			\]
			for some uniform constant $C_1$.
		\end{proof}
		
Now we can prove Theorem \ref{mainthm}:
\begin{proof}[Proof of  Theorem \ref{mainthm}]
	Let $\pi: \widehat{X} \to X$ be the blow-up of $C$ in $X$. Let $\widehat{S}_0$ be the strict transform of $S_0$, and let $D$ be the exceptional divisor of $\pi$. Define $E_0 := \widehat{S}_0 \cap D$.
	Then we have $E_0 = \mathbb{P} (\mathcal{N}^*_{C_0/S_0})$.
	
	By applying Proposition \ref{addlemma}, for every $m\in\mathbb{N}$, there exists a $\omega_{\widehat{X}}$-psh function $\varphi_m$ such that $a_m := \nu (\varphi_m |_{\widehat{S}_0}, E_0) \geq m$, where $\omega_{\widehat{X}}$ is a Kähler metric on $\widehat{X}$.
	
	Consider the current $T_m := \omega_{\widehat{X}}+ \ddbar \varphi_m \geq 0$ on $\widehat{X}$. By construction, the restriction $T_m |_{\widehat{S}_0}$ is well defined. Now, we set 
	\[
	G_m := T_m |_{\widehat{S}_0}  -a_m [E_0].
	\] 
	Then, $G_m |_{E_0}$ is a well-defined positive current on $E_0$, and we have 
	\begin{equation}
		c_1 (G_m |_{E_0}) =  c_1 ( \omega_{\widehat{X}}  |_{E_0}  + a_m  \cdot \mathcal{O}_{\mathbb{P} (\mathcal{N}^*_{C_0/S_0})} (1) ) \qquad\text{on } E_0.
	\end{equation}
	Since $a_m \geq m$ and $G_m |_{E_0} \geq 0$, we conclude that $ \mathcal{N}^*_{C_0/S_0}$ is psef in the sense of Definition \ref{anapsf}. This completes the proof of the first part of the theorem.
	
	\medskip
	
	For the second part of the theorem comes from the standard argument. We consider the diagonal $C\simeq X \subset X\times X$ and  let $S_0 \subset X\times X$ be the ananlytic graphe of $\cF$. Then $\mathcal{N}^* _{C/S_0}$ is not psef on the locally free locus of $\cF$. By the first part of the theorem, $S_0$ has the same dimension as its Zariski closure in $X\times X$.  It will imply that $\cF$ is induced a meromorphic map by the arguments in \cite[section 4.1]{CP19} (or \cite[Thm 1.4]{Ou} ). 
\end{proof}


\section{Algebraicity of positive foliations.}

Let $X$ be a compact K\"ahler manifold, and let $\cG \subset T_X$ be a saturated holomorphic subsheaf. We say that $\cG$ is a holomorphic foliation on $X$ if it is closed under the Lie bracket, namely, $[\cG, \cG] \subset \cG$. It is said to be algebraic if its leaves are algebraic. Equivalently, this means that there exists a meromorphic fibration $f: X \dasharrow Y$ with algebraic leaves such that 
$\cG = \ker(df)$ generically on $X$. 
\medskip

\noindent {\bf Convention}. The notions of pseudo-effectivity, mobile classes, slopes, and numerical dimension will be recalled in the Appendix, which also provides their basic properties along with references for the proofs.
\smallskip

The main result of this section is as follows. It extends \cite[Thm 1.1, Thm 1.2]{CP19} to the K\"ahler setting.

\begin{theorem}\label{main} 
	Let $X$ be a compact Kähler manifold, and let $\cF \subset T_X$ be a holomorphic foliation such that $\mu_{\alpha}(\cF) > 0$ for some $\alpha \in \Mov(X)$. 
	Let $\cG$ be the maximal $\alpha$-destabilizing subsheaf of $\cF$ (cf. Theorem \ref{max} in the appendix). 
	
	\begin{itemize}
		\item [(1)] Then $\cG$ is an algebraic foliation.
		
		\item [(2)] Let $p: X \dasharrow Y$ be the meromorphic fibration induced by $\cG$, and let $X_y$ be a generic fiber of $p$. Then $X_y$ is rationally connected. 
		
	\end{itemize}
	In particular, $X$ is uniruled, and $K_X$ is not pseudo-effective.
\end{theorem}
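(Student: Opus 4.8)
The plan is to follow the argument of \cite{CP19}, feeding in the Kähler substitutes established above (Theorems \ref{mainthm}, \ref{thm1}, \ref{thm3}, \ref{quot} and Proposition \ref{slope}), and to run an induction on $\dim X$. First I would check that $\cG$ is itself a foliation. As the maximal $\alpha$-destabilizing subsheaf of $\cF$ (Theorem \ref{max}), $\cG$ is $\alpha$-semistable, saturated in $T_X$, with $\mu_{\alpha,\min}(\cG)=\mu_\alpha(\cG)=\mu_{\alpha,\max}(\cF)>0$ and $\mu_{\alpha,\max}(\cF/\cG)<\mu_\alpha(\cG)$. If $\rk\cG=1$ there is nothing to prove; if $\rk\cG\ge2$, the Lie bracket yields an $\cO_X$-linear map $\wedge^2\cG\to\cF/\cG$ (it lands in $\cF$ since $\cG\subset\cF$ and $\cF$ is closed under the bracket), and since exterior and tensor powers of $\alpha$-semistable sheaves remain $\alpha$-semistable in characteristic zero (cf. \cite{C} and the appendix) one has $\mu_{\alpha,\min}(\wedge^2\cG)=2\mu_\alpha(\cG)>\mu_{\alpha,\max}(\cF/\cG)$, which forces this map to vanish, i.e. $[\cG,\cG]\subset\cG$.

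Next I would prove part (1), that $\cG$ is algebraic. Dualizing the Harder--Narasimhan filtration gives $\mu_{\alpha,\max}(\cG^\star)=-\mu_{\alpha,\min}(\cG)=-\mu_\alpha(\cG)<0$, whereas by Proposition \ref{slope} a psef reflexive sheaf has nonnegative maximal slope against any $\alpha\in\Mov(X)$; hence $\cG^\star$ is not psef. Applying the second part of Theorem \ref{mainthm} to the foliation $\cG$ then shows that $\cG$ is induced by a meromorphic fibration; after replacing $X$ and $Y$ by suitable bimeromorphic models and transporting $\alpha$ accordingly --- which, as in \cite{CP19}, preserves the relevant positivity --- I may assume there is a morphism $p\colon X\to Y$ with connected fibres such that $\cG=T_{X/Y}$ off a subset of codimension $\ge2$.

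For part (2), I must show that a general fibre $X_y$ of $p$ is rationally connected. Assuming it is not, I would relativize the MRC fibration of the fibres (available in the Kähler setting, \cite{C}) to obtain, after further modifications, a factorization $X\xrightarrow{\,h\,}Z\xrightarrow{\,g\,}Y$ with $h$ having rationally connected general fibres and $g$ having general fibre $R_y$ equal to the MRC quotient of $X_y$, so $\dim R_y\ge1$ and $R_y$ is not uniruled. By the inductive hypothesis applied in dimension $<\dim X$, non-uniruledness of $R_y$ gives $\mu_{\beta,\max}(T_{R_y})\le0$ for every $\beta\in\Mov(R_y)$, hence $-K_{R_y}$ has nonpositive degree against every movable class and $K_{R_y}$ is pseudo-effective (by the duality between the pseudo-effective and movable cones recalled in the appendix). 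Setting $\cG':=T_{X/Z}^{\,\mathrm{sat}}\subsetneq\cG$, the quotient $\cG/\cG'$ agrees off codimension $\ge2$ with the saturation of $h^\star T_{Z/Y}$, and being a quotient of the $\alpha$-semistable sheaf $\cG$ it satisfies $\mu_{\alpha,\min}(\cG/\cG')\ge\mu_\alpha(\cG)>0$, so $\det(\cG/\cG')\cdot\alpha^{n-1}>0$. On the other hand $\det(\cG/\cG')^\star$ is, up to an effective divisor supported on the degenerate fibres of $g$, the $h$-pullback of a twisted relative canonical bundle $K_{Z/Y}-D(g)$ of $g$, whose pseudo-effectivity follows from the fibrewise statement ``$K_{R_y}$ pseudo-effective'' together with the positivity of twisted relative canonical bundles --- \cite{BP} when $g$ is locally projective, Theorem \ref{thm3} in general. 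Since a pseudo-effective class pairs nonnegatively with the movable class $\alpha^{n-1}$, this contradicts $\det(\cG/\cG')\cdot\alpha^{n-1}>0$, so $X_y$ is rationally connected. Finally, the rationally connected fibres of $p$ sweep out $X$, so $X$ is uniruled, and a covering family of rational curves provides a movable curve class of negative $K_X$-degree, whence $K_X$ is not pseudo-effective.

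I expect the third step to be the main obstacle. Two points require care: keeping track of how the slope relative to $\alpha$ transforms under the successive bimeromorphic modifications and, above all, under restriction to the fibres of $p$ (this is where the choice $\alpha\in\Mov(X)$ and the bookkeeping of \cite{CP19} are indispensable); and verifying that the hypotheses needed to apply the relative canonical bundle positivity of Section 3 --- local projectivity of the fibration, or the existence of the decisive $(2,0)$-form $\sigma$ of hypothesis (ii) --- are met for the fibration $g$ extracted from the MRC construction, so that the effective corrections coming from the saturation and from multiple fibres are genuinely dominated. This is precisely the step at which \cite{CP19} invokes the projective hypothesis, and where Theorem \ref{thm3} (respectively \cite{BP}) must take over.
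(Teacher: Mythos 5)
The overall strategy coincides with the paper's: you use the slope argument and the Lie bracket to show $\cG$ is a foliation (this is exactly Corollary \ref{foliation}), you use Proposition \ref{slope} and Theorem \ref{mainthm} to get algebraicity, and you then argue by contradiction via the relative MRC fibration and pseudo-effectivity of a twisted relative canonical bundle. So the skeleton is right.

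There is, however, a genuine gap, and it is precisely the point you flag at the end as ``the main obstacle'' without resolving it. To apply Theorem \ref{thm3} to the fibration $g\colon Z\to Y$ coming out of the relative MRC construction you need to verify hypothesis (iii) of Section 3: there must exist a K\"ahler class $\omega'$ and a holomorphic $(2,0)$-form $\sigma'$ on $Z$ with $\omega'+\sigma'+\overline{\sigma'}$ rational and $\sigma'$ vanishing on the generic fiber of $g$. The paper produces this by pushing forward $(\omega+\sigma+\overline{\sigma})^{k+1}$ along $r$, and the crucial input that makes the $(2,0)$-part of the push-forward vanish on the fibers of $g$ is the identity
\begin{equation*}
\sigma|_{X_y}=0\quad\text{for generic }y ,
\end{equation*}
i.e.\ equation \eqref{res}. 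This the paper proves by a slope argument: $\sigma|_{\cG}$ is a section of $\Hom(\Lambda^{[2]}\cG,\cO_X)$, and since $\mu_{\alpha,\min}(\Lambda^{[2]}\cG)=2\mu_\alpha(\cG)>0=\mu_\alpha(\cO_X)$ (Theorem \ref{sum}), any such section vanishes. Your proposal never establishes this vanishing, so Theorem \ref{thm3} cannot be invoked as you do (it is not a ``general'' relative positivity theorem for K\"ahler maps; the $(2,0)$-form hypothesis is essential). The vanishing is also what makes the fibers $X_y$, and hence $R_y$, projective --- which is what lets the paper invoke Graber--Harris--Starr and BDPP directly, sidestepping the induction on dimension you propose. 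Your appeal to an inductive hypothesis is not a substitute: without projectivity of $X_y$ one cannot even run the fiberwise MRC and GHS, and the statement you want to feed into the induction (``non-uniruled K\"ahler $\Rightarrow$ $K$ psef'') is itself downstream of the theorem being proved (Corollary \ref{unricri}), so the logical ordering needs to be set up much more carefully than the sketch suggests.

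So: close the gap by first proving $\sigma|_{X_y}=0$ via the $\Hom(\Lambda^{[2]}\cG,\cO_X)$ slope argument; this simultaneously gives projectivity of the fibers (so GHS and BDPP become available with no induction) and supplies the decisive $(2,0)$-form on $Z$ required by Theorem \ref{thm3}. The rest of your argument then goes through as in the paper.
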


\begin{proof} 
	Since $H^2 (X, \mathbb{Q})$ is dense in $H^2 (X, \mathbb{R})$, thanks to the Hodge decomposition, there exists a Kähler metric $\omega$ and a holomorphic $(2,0)$-form $\sigma \in H^{2,0} (X)$ such that the cohomology class of $\omega + \sigma + \overline{\sigma}$ is rational; see \cite{C} or \cite[8.3]{Ou}, for more details. For example, if $X$ is projective, we can take $\sigma = 0$.
	
	Since $\cG$ be the maximal $\alpha$-destabilizing subsheaf of $\cF$ and $\mu_{\alpha}(\cF) > 0$, we have $\mu_{\alpha}(\cG) > 0$. By applying Corollary \ref{foliation} in the appendix, we infer that $\cG$ is a foliation and  
	\begin{equation}\label{positve}
		\mu_{\alpha, \min} (\cG)  > 0.
	\end{equation}  
	By Proposition \ref{slope}, $\cG^\star$ is not psef. Then Theorem \ref{mainthm} or \cite[Thm  1.4 ]{Ou} implies that the foliation $\cG$ is induced by a meromorphic fibration  $p: X\mathrel{-\,}\rightarrow Y$, whose relative tangent bundle is equal to $\cG$. 
	
	\medskip
	
\noindent We next discuss the second point (2).  Let $X_y$ be a generic fiber of $p$.
We claim that the restriction is identically zero:
\begin{equation}\label{res}
	\sigma |_{X_y} = 0
	\end{equation} 
	The argument is as follows: the restriction 
$\sigma|_\cG$ can be seen as section
\[\sigma|_\cG\in H^0(X, \Hom(\Lambda^{[2]}\cG, \mathcal O_X)),\]
where $\Lambda^{[2]}\cG$ is the reflexive hull of the second exterior power of the sheaf $\cG$.  Since
$\mu_{\alpha, \min}(\Lambda^{[2]}\cG) > 0$ by Theorem \ref{sum}, the $H^0$ above vanishes, which proves our claim.
\smallskip

 As in \cite{CP19}, we consider birational maps $\pi_X: \wh X\to X$, $ \pi_Y: \wh Y\to Y $, and
	$\wh p: \wh X\to \wh Y$, so that $p\circ \pi_X= \pi_Y\circ\wh p$. 
	Moreover, we can assume that the following statements are satisfied.
	\begin{itemize}
		\smallskip
		
		\item The discriminant locus of $\wh p$ is a divisor $E$ with snc support
		\smallskip
		
		\item If a component of $\wh p^{-1}(E)$ is exceptional, then it is also $\pi_X$-exceptional. 	
	\end{itemize}
	
	\noindent The kernel of $p$ i.e. $\cG= \ker(dp)$ induces a foliation $\wh \cG$
	on $\wh X$, and it is known that $\det(\wh \cG^\star)= K_{\wh X/\wh Y}- D(\wh p)$, cf. for example \cite[Section 2]{Dru1}.

	Let $\wh X_y$ be the generic fiber of $\wh p$. We first prove that  $K_{\wh X_y}$ is not psef.  We assume by contradiction that $K_{\wh X_y}$ is psef. Since the cohomology class of $\omega+ \sigma +\overline{\sigma}$ is rational and $\sigma |_{\wh X_y}=0$ by \eqref{res},  we can apply Theorem \ref{thm1} 
	to $\wh p$.  Then Theorem \ref{thm1} shows that $\det(\wh \cG^\star)= K_{\wh X/\wh Y}- D(\wh p)$ is psef.
	Then $\mu_{\pi_X^\star\alpha}(\wh \cG^\star)\geq 0$.  On the other hand,  by \eqref{positve}, we have
	$\mu_{\pi_X^\star\alpha}(\wh \cG^\star)=-\mu_{\alpha}(\cG)<0$, 
	where the first equation comes from the fact that $\det(\wh \cG^\star)$ and $\pi_X^\star\det(\cG^\star)$ can only differ by some $\pi_X$-exceptional divisor. We obtain thus a contradiction. Therefore $K_{\wh X_y}$ is not psef.
	
	Our goal is to prove that $\wh X_y$ is rational connected. Note that the cohomology class of $\omega+ \sigma +\overline{\sigma}$ is rational and $\sigma |_{\wh X_y}=0$ by \eqref{res}. 
	Then $\wh X_y$ is projective. As $K_{\wh X_y}$ is not psef, by using \cite{BDPP}, we know that $\wh X_y$ is uniruled.  Now we can follow the proof of  \cite[Thm 4.7]{CP19}. 
	
Consider the relative MRC fibration of $\wh p$:  $r: \wh X \dasharrow \wh Z$, so that we also have a map $\wh s: \wh Z \dasharrow \wh Y$.
\footnote{Since the existence of a relative rational quotient (or MRC) rests on the compactness properties of the Chow-Barlet space, it remains valid in the K\"ahler context.} By replacing $\wh X$ and $\wh Z$ with birational models (for which we will use the same notation), we can assume that 
the maps $r$ and $s$ are regular.	
We denote by $\widehat{\mathcal{H}} := \ker (d\wh s)$, the foliation induced by the kernel of $\wh s$.
	
Next we claim that $\det (\widehat{\mathcal{H}})^\star$ is psef. To see this, let $k$ be the dimension of 
the generic fiber of $r$. We consider the rational $(1,1)$-class $r_\star \big(   (\omega+\sigma +\overline{\sigma})^{k+1}\big)$ on  $\wh Z$.
	Since the fibers of $r$ are rational connected, the restriction of $\sigma$ vanishes. Then we have 
	$$r_\star (\omega+\sigma +\overline{\sigma})^{k+1} = r_\star\omega^{k+1} +k\cdot  r_\star  \big (  \omega^{k} \wedge \sigma +  \omega^{k} \wedge \overline{\sigma}\big) ,$$
	where the first term of RHS is $(1,1)$-strict postive over a generic fiber of $\wh Z \to \wh Y$ and the second term of RHS is of type $(2,0) + (0,2)$. Morevoer, thanks to \eqref{res}, the restriction of $r_\star  \big (  \omega^{k} \wedge \sigma +  \omega^{k} \wedge \overline{\sigma}\big) $ on the generic fiber of $\wh s$ vanishes.
	Then we can apply Theorem \ref{thm3} to $\wh Z \to \wh Y$ with respect to $ r_\star  \big(   \omega^{k+1}\big) +k\cdot  r_\star  \big (  \omega^{k} \wedge \sigma +  \omega^{k} \wedge \overline{\sigma}\big)$. 
Actually the context here is a bit more general than in Theorem \ref{thm3}, but the point is that the $(1,1)$ current 
\[r_\star \omega^{k+1}\]
is closed, greater than a Kähler metric on $\wh Z$ and smooth when restricted to the $\wh s$-inverse image of the complement of a Zariski closed subset of $\wh Y$. The local $L^{\frac{2}{k}}$-extension in Step four still works when $m_0$ is large enough. Then the proof of Theorem \ref{thm3} goes through.	
In particular, it follows that $\det (\widehat{\mathcal{H}})^\star$ is psef. 
	
	  On the other hand, we have a natural generic surjective morphism $\wh \cG \to r^*\widehat{\mathcal{H}}$. Thanks to \eqref{positve}, we know that $\mu_{\pi^*\alpha} (r^*\widehat{\mathcal{H}} )>0$ if $\widehat{\mathcal{H}}$ is non zero. Therefore $\widehat{\mathcal{H}}$ is zero and $\wh X_y$ is rationally connected.
\end{proof}

\medskip

As an application of Theorem \ref{main}, we obtain the following extension of \cite[Thm 1.2]{CP19} \cite{CPet} to the Kähler case.

\begin{cor}\label{tens}
	
	Let $X$ be a compact Kähler manifold, and assume that $K_X$ is pseudo-effective. Let $\cQ$ be a torsion-free quotient of $\otimes^m \Omega^1_X$ for some $m > 0$. Then $\det(\cQ)$ is pseudo-effective. 
	
	In particular, if $L \to \otimes^m\Omega^1_X$ is an injective morphism of sheaves and $L$ is a line bundle on $X$, then $\nu(X,L) \leq \nu(X,K_X)$, where $\nu(X,L)$ denotes the numerical dimension of $L$. 
	
\end{cor}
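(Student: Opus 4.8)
The plan is to derive the first assertion (which is exactly Theorem~\ref{quot}) from Theorem~\ref{main}, by reformulating the pseudo-effectivity of $\det\cQ$ as a bound on the slopes of $T_X$ relative to movable classes. First I would record that a line bundle on $X$ is pseudo-effective if and only if it has nonnegative intersection with every $\alpha\in\Mov(X)$ (the pseudo-effective/movable duality recalled in the appendix). Since $\cQ$ is a torsion-free quotient of $\otimes^m\Omega^1_X=\otimes^m T_X^\star$, for every $\alpha\in\Mov(X)$ we have
\[
c_1(\det\cQ)\cdot\alpha=\rank\cQ\cdot\mu_\alpha(\cQ)\ \ge\ \rank\cQ\cdot\mu_{\alpha,\min}\!\big(\otimes^m\Omega^1_X\big)=-\,m\,\rank\cQ\cdot\mu_{\alpha,\max}(T_X),
\]
where the inequality uses $\mu_\alpha(\cQ)\ge\mu_{\alpha,\min}(\cQ)\ge\mu_{\alpha,\min}(\otimes^m\Omega^1_X)$ (the minimal slope of a quotient dominates that of the ambient sheaf), and the final equality uses $\mu_{\alpha,\min}(E^{\otimes m})=m\,\mu_{\alpha,\min}(E)$ in characteristic zero (Theorem~\ref{sum}) together with $\mu_{\alpha,\min}(\Omega^1_X)=-\mu_{\alpha,\max}(T_X)$. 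So it suffices to prove that $\mu_{\alpha,\max}(T_X)\le0$ for every $\alpha\in\Mov(X)$.

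Suppose, for contradiction, that $\mu_{\alpha,\max}(T_X)>0$ for some $\alpha\in\Mov(X)$, and let $\cG\subset T_X$ be the maximal $\alpha$-destabilizing subsheaf (cf. Theorem~\ref{max} in the appendix). Being $\alpha$-semistable, $\cG$ satisfies $\mu_{\alpha,\min}(\cG)=\mu_\alpha(\cG)=\mu_{\alpha,\max}(T_X)>0$, and it is a foliation by Corollary~\ref{foliation} (applied to $T_X$, which is trivially a foliation). Then Theorem~\ref{main}, applied to $\cG$, forces $K_X$ to be non-pseudo-effective, contradicting the hypothesis. Hence $\mu_{\alpha,\max}(T_X)\le0$ for all movable $\alpha$, and the displayed inequality shows that $\det\cQ$ is pseudo-effective.

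For the last statement, let $\bar L\subseteq\otimes^m\Omega^1_X$ be the saturation of the image of $L$, so that $\cQ:=(\otimes^m\Omega^1_X)/\bar L$ is a torsion-free quotient of $\otimes^m\Omega^1_X$, of rank $n^m-1$ with $n:=\dim X$, and
\[
\det\cQ=\det\!\big(\otimes^m\Omega^1_X\big)-\bar L=m\,n^{m-1}K_X-\bar L.
\]
By the first part $\det\cQ$ is pseudo-effective; since $\bar L-L$ is the class of an effective divisor, $m\,n^{m-1}K_X-L=\det\cQ+(\bar L-L)$ is pseudo-effective too. If $L$ is not pseudo-effective there is nothing to prove (with the convention $\nu(X,L)=-\infty$); otherwise $L\preceq m\,n^{m-1}K_X$ in the pseudo-effective order, and the monotonicity and positive homogeneity of the numerical dimension (recalled in the appendix) give $\nu(X,L)\le\nu(X,m\,n^{m-1}K_X)=\nu(X,K_X)$.

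I do not expect a serious obstacle here: the argument is essentially an assembly of the appendix's formalism---Harder--Narasimhan filtrations and slopes with respect to movable classes, the behaviour of slopes under tensor powers, the pseudo-effective/movable duality, and monotonicity of the numerical dimension---together with the two substantive inputs, Theorem~\ref{main} and the fact that the maximal destabilizing subsheaf of $T_X$ is a foliation. The one point that needs care is the chain of slope inequalities and the resulting reduction to $\mu_{\alpha,\max}(T_X)\le0$; once that reduction is in place, the contradiction with Theorem~\ref{main} and the deduction of the numerical inequality are immediate.
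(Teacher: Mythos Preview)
Your argument is correct and follows essentially the same route as the paper: reduce the pseudo-effectivity of $\det\cQ$ to the inequality $\mu_{\alpha,\max}(T_X)\le 0$ for every movable $\alpha$, then derive a contradiction with Theorem~\ref{main} via Corollary~\ref{foliation}; for the numerical-dimension inequality you saturate $L$ and use monotonicity (Lemma~\ref{ndim}) together with homogeneity, which is exactly what the paper does (the paper phrases the torsion issue as ``even if $\cQ$ has torsion, $\det\cQ$ is still psef'', which is your saturation step in disguise).
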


\begin{proof} 
	Assume, by contradiction, that $\det \cQ$ is not pseudo-effective. Then there exists a class $\alpha \in \text{Mov} (X)$ such that $c_1 (\det \cQ) \cdot \alpha < 0$.  
	By passing to duality, this implies that $\mu_{\alpha, \max}(\otimes^m T_X) > 0$.  
	Together with Theorem \ref{sum}, we obtain  
	$$\mu_{\alpha, \max}(T_X) = \frac{1}{m} \mu_{\alpha, \max}(\otimes^m T_X) > 0.$$  
	
	Thanks to Theorem \ref{max}, there exists a unique maximal $\alpha$-destabilizing subsheaf $\cF \subset T_X$. Then $\mu_\alpha (\cF) = \mu_{\alpha, \max}(T_X) > 0$.  
	By Corollary \ref{foliation}, $\cF$ is a foliation. Applying Theorem \ref{main} to $\cF$, we conclude that $K_X$ is not pseudo-effective, leading to a contradiction.  
\medskip
	
\noindent For the second part, we argue as follows. Let $\cQ$ be the quotient of $\otimes^m\Omega^1_X$ by $L$. Even if $\cQ$ might have torsion, the determinant $\det \cQ$ is still psef. On the other hand, we have the equality
\[c(m)c_1(K_X)= c_1(L)+ c_1(\cQ)\]
for some positive integer $c(m)$. The conclusion follows by Lemma \ref{ndim} in the Appendix .
\end{proof}

\begin{cor}\label{unricri}
	Let $X$ be a compact Kähler manifold. The following statements are equivalent:
	
	1. $X$ is uniruled.
	
	2. $\mu_{\alpha, \max}(T_X) > 0$ for some $\alpha \in \Mov(X)$. 
	
	3. $\mu_{\alpha}(T_X) > 0$ for some $\alpha \in \Mov (X)$.
\end{cor}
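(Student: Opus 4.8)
The plan is to prove the cycle of implications $1 \Rightarrow 2 \Rightarrow 3 \Rightarrow 1$, relying on Theorem \ref{main} together with standard positivity facts about the tangent sheaf that are recalled in the Appendix. I would set up the whole argument around the maximal destabilizing subsheaf and the slope $\mu_\alpha$, since that is the language in which Theorem \ref{main} is stated.

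For $1 \Rightarrow 2$: suppose $X$ is uniruled. Then there is a covering family of rational curves, and I would pass to the MRC (maximal rationally connected) fibration $f : X \dasharrow Z$ with $\dim Z < \dim X$; replacing $X$ by a birational model I may assume $f$ is a regular morphism. The relative tangent sheaf $T_{X/Z}$ is a nonzero subsheaf of $T_X$ of positive degree against a suitable mobile class, because the general fiber of $f$ is rationally connected and hence its tangent bundle has positive degree against the restriction of an ample (in the projective fiber) class; one then produces $\alpha \in \Mov(X)$ (e.g.\ built from the class of a minimal covering family of rational curves, cf.\ \cite{BDPP}) with $\mu_\alpha(T_{X/Z}) > 0$, whence $\mu_{\alpha,\max}(T_X) \geq \mu_\alpha(T_{X/Z}) > 0$. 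Here I would quote the Kähler MRC existence, which (as the paper notes in a footnote) holds thanks to compactness of the Chow–Barlet space.

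The implication $2 \Rightarrow 3$ is essentially formal: if $\mu_{\alpha,\max}(T_X) > 0$ then by Theorem \ref{max} in the Appendix there is a (unique) maximal $\alpha$-destabilizing subsheaf $\cF \subset T_X$ with $\mu_\alpha(\cF) = \mu_{\alpha,\max}(T_X) > 0$; in particular $\mu_\alpha(T_X) > 0$ if we simply take $\cF = T_X$ — wait, more carefully, statement 3 asks only for $\mu_\alpha(T_X) > 0$ for \emph{some} $\alpha$, and since $\mu_\alpha(T_X) = \mu_{\alpha,\max}(T_X)$ would require $T_X$ to be $\alpha$-semistable, I should instead argue: the destabilizing subsheaf $\cF$ satisfies $\mu_\alpha(\cF) > 0$, and by Corollary \ref{foliation} it is a foliation, so applying Theorem \ref{main} to $\cF$ already gives uniruledness — i.e.\ I would route $2 \Rightarrow 1$ directly through Theorem \ref{main}, and separately observe $3 \Rightarrow 2$ trivially since $\mu_\alpha(T_X) \le \mu_{\alpha,\max}(T_X)$. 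Finally $1 \Rightarrow 3$ can be obtained from the $1 \Rightarrow 2$ construction above, noting that the destabilizing subsheaf there has positive slope. So the clean logical skeleton is: $1 \Rightarrow 2$ (MRC fibration), $2 \Rightarrow 1$ (Theorem \ref{main} via Corollary \ref{foliation}), and $2 \Leftrightarrow 3$ (formal, since a positive maximal slope is witnessed by a destabilizing subsheaf of positive slope, and conversely $\mu_\alpha(T_X) > 0 \Rightarrow \mu_{\alpha,\max}(T_X) > 0$).

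The main obstacle is the $1 \Rightarrow 2$ direction: one must produce a \emph{mobile} class $\alpha$ against which $T_{X/Z}$ (or the MRC foliation) has strictly positive slope, and this requires knowing that the rational curves sweeping out the fibers give rise to a class in $\Mov(X)$ with the right intersection-theoretic positivity — in the Kähler setting this uses the Kähler analogue of \cite{BDPP} and the properties of $\Mov(X)$ collected in the Appendix. Once that class is in hand, the rest is bookkeeping with slopes and the already-established Theorem \ref{main}.
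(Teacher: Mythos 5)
Your logical skeleton has a genuine gap in the step where you claim $2\Leftrightarrow 3$ is ``formal.'' Only $3\Rightarrow 2$ is formal (since $\mu_\alpha(T_X)\leq\mu_{\alpha,\max}(T_X)$). The reverse direction $2\Rightarrow 3$ is \emph{not} formal: $\mu_{\alpha,\max}(T_X)>0$ says some subsheaf $\cF\subset T_X$ has positive $\alpha$-slope, but this tells you nothing about $\mu_\alpha(T_X)$ itself, which could very well be negative (one may have $\deg_\alpha\det\cF>0$ while $\deg_\alpha\det(T_X/\cF)$ is very negative). Your fallback suggestion—``$1\Rightarrow 3$ can be obtained from the $1\Rightarrow 2$ construction, noting that the destabilizing subsheaf there has positive slope''—repeats the same error: positivity of a subsheaf's slope only witnesses statement~2, not statement~3. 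So as written, the cycle of implications does not close.

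The paper closes the loop by proving $1\Rightarrow 3$ directly and simply, without any MRC fibration. If $X$ is uniruled it is covered by free rational curves $(C_t)_{t\in\mathcal C}$; the class $[C_t]$ pairs nonnegatively with every pseudo-effective $(1,1)$-class (a free curve is mobile), hence $[C_t]\in\Mov(X)$; and because each $C_t$ is a free rational curve one has $-K_X\cdot[C_t]>0$, so
\[
\mu_{[C_t]}(T_X)=\tfrac{1}{n}\,c_1(T_X)\cdot[C_t]=-\tfrac{1}{n}\,K_X\cdot[C_t]>0,
\]
which is exactly statement~3 (and hence 2). Combined with your correct $3\Rightarrow 2$ (trivial) and $2\Rightarrow 1$ (Theorem \ref{max}, Corollary \ref{foliation}, then Theorem \ref{main}), this gives the full equivalence. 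Note also that your $1\Rightarrow 2$ via the MRC fibration is both more work than necessary and would still leave $1\Rightarrow 3$ unproved; the direct computation against the class of a free covering family is what you want.
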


Note that the equivalence of (1) and (3) is proved differently in \cite[Thm 1.1]{Ou}. Here, we additionally prove that (2) implies (3).

\begin{proof}
	By definition, (3) implies (2).  
	
	For (2) $\Rightarrow$ (1), using Theorem \ref{max} and Corollary \ref{foliation}, we know that the maximal $\alpha$-destabilizing subsheaf $\cF \subset T_X$ is a foliation and satisfies $\mu_\alpha (\cF) > 0$. By Theorem \ref{main}, $X$ is uniruled. 
	
	For (1) $\Rightarrow$ (3), if $X$ is uniruled, then $X$ can be covered by a family of free rational curves $(C_t)_{t\in \cC}$. Then $[C_t] \cdot L \geq 0$ for any pseudo-effective class $L \in H^{1,1} (X, \mathbb R)$. Therefore we have $[C_t] \in \Mov (X)$. Furthermore,  
	$$\mu_{[C_t]} (T_X) = -K_X \cdot [C_t] > 0,$$  
	since $C_t$ is a rational curve.   
\end{proof}	

Now we extend  \cite[Thm 4.7]{CP19} to the Kähler setting.

\begin{cor}\label{folRC} 
	Let $X$ be a compact Kähler manifold. Let $\cF \subset T_X$ be a holomorphic foliation. If $\mu_{\alpha, \min}(\cF) > 0$ for some $\alpha \in \Mov (X)$, 
	then $\cF$ is  algebraic and its leaves are rationally connected.
\end{cor}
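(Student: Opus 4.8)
The plan is to establish this by reproducing the argument used for Theorem \ref{main}, but with $\cF$ itself in the role played there by the maximal $\alpha$-destabilizing subsheaf $\cG$: the hypothesis $\mu_{\alpha, \min}(\cF) > 0$ already supplies directly all the positivity that was extracted there from Corollary \ref{foliation}, so no destabilization is needed. First I would note that $\mu_{\alpha, \min}(\cF) > 0$ forces $\mu_{\alpha, \max}(\cF^\star) = -\mu_{\alpha, \min}(\cF) < 0$ (duality of Harder--Narasimhan filtrations), so by Proposition \ref{slope} the sheaf $\cF^\star$ is not psef; Theorem \ref{mainthm} (equivalently \cite[Thm 1.4]{Ou}) then shows that $\cF$ is induced by a meromorphic fibration $p$ whose relative tangent sheaf is $\cF$. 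It remains to prove that a general fiber $X_y$ of $p$ --- the Zariski closure of a general leaf --- is rationally connected.

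For this I would follow the second half of the proof of Theorem \ref{main}. Choose, using the Hodge decomposition and the density of $H^2(X, \bQ)$, a Kähler metric $\omega$ and a holomorphic $(2,0)$-form $\sigma$ with $\{\omega + \sigma + \overline{\sigma}\}$ rational (cf. \cite{C}). The restriction $\sigma|_\cF$ is a section of $\Hom(\Lambda^{[2]}\cF, \cO_X)$, and since $\mu_{\alpha, \min}(\Lambda^{[2]}\cF) \geq 2\,\mu_{\alpha, \min}(\cF) > 0$ by Theorem \ref{sum}, this space of sections vanishes; hence $\sigma|_{X_y} = 0$. Next, pass to birational models $\pi_X\colon \wh X \to X$, $\pi_Y\colon \wh Y \to Y$ and $\wh p\colon \wh X \to \wh Y$ with snc discriminant (and the usual condition on the exceptional components), so that $\det(\wh\cF^\star) = K_{\wh X/\wh Y} - D(\wh p)$. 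Then $K_{\wh X_y}$ cannot be psef: otherwise, as $\sigma|_{\wh X_y} = 0$ and the class $\{\omega + \sigma + \overline{\sigma}\}$ is rational, Theorem \ref{thm1} applied to $\wh p$ would give that $K_{\wh X/\wh Y} - D(\wh p) = \det(\wh\cF^\star)$ is psef, hence $\mu_{\pi_X^\star\alpha}(\wh\cF^\star) \geq 0$; but this slope equals $-\mu_{\alpha}(\cF) \leq -\mu_{\alpha, \min}(\cF) < 0$ (the two determinants differ only by $\pi_X$-exceptional divisors, which pair to zero against $\pi_X^\star\alpha$), a contradiction.

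Since $\{\omega + \sigma + \overline{\sigma}\}|_{\wh X_y}$ is a rational Kähler class and $\sigma|_{\wh X_y} = 0$, the fiber $\wh X_y$ is projective, so by \cite{BDPP} it is uniruled. I would then form the relative MRC fibration $r\colon \wh X \dasharrow \wh Z$ of $\wh p$, together with the induced $\wh s\colon \wh Z \dasharrow \wh Y$ --- available in the Kähler category because the Barlet cycle space is compact --- and replace all data by regular models. Writing $k$ for the dimension of a general fiber of $r$, I would push forward the rational class $(\omega + \sigma + \overline{\sigma})^{k+1}$: the fibers of $r$ being rationally connected, $\sigma$ restricts trivially to them, so $r_\star(\omega + \sigma + \overline{\sigma})^{k+1} = r_\star\omega^{k+1} + k\,r_\star(\omega^k\wedge\sigma + \omega^k\wedge\overline{\sigma})$, where the first summand is closed, dominates a Kähler metric and is smooth over the complement of a Zariski closed subset of $\wh Y$, while the second is of type $(2,0)+(0,2)$ with trivial restriction to the general fiber of $\wh s$ (because $\sigma|_{\wh X_y} = 0$). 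Theorem \ref{thm3}, in the slightly more general form used in the proof of Theorem \ref{main} and with $m_0$ large, then applies to $\wh s$ and shows that $\det(\ker d\wh s)^\star$ is psef. On the other hand there is a natural generically surjective morphism $\wh\cF \to r^\star\ker(d\wh s)$; if $\ker(d\wh s) \neq 0$, then $r^\star\ker(d\wh s)$, being a torsion-free quotient of $\wh\cF$, has $\mu_{\pi_X^\star\alpha, \min} > 0$ --- in particular positive slope --- which contradicts the pseudo-effectivity of $\det(\ker d\wh s)^\star$ (pull back the psef class from $\wh Z$ and pair it against $\pi_X^\star\alpha$). Hence $\ker(d\wh s) = 0$, the relative MRC fibration is trivial, and $\wh X_y$, therefore also $X_y$, is rationally connected; this gives the assertion.

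I expect the only genuine difficulties to be bookkeeping ones: checking that positivity of $\mu_{\cdot, \min}$ survives the operations used --- passage to $\Lambda^{[2]}\cF$, to torsion-free quotients, and to birational pullbacks (all via Theorem \ref{sum} and the definitions recalled in the appendix) --- and verifying that Theorem \ref{thm3} really does apply in the slightly broader situation arising from the pushed-forward class, together with the existence of the \emph{relative} MRC quotient in the Kähler setting. Everything else is a faithful transcription of the argument already carried out for Theorem \ref{main}, so the analytic heart of the proof is entirely imported from Theorems \ref{thm1}, \ref{thm3} and \ref{mainthm}.
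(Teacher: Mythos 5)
Your proof is correct and takes essentially the same approach as the paper's, differing only cosmetically: you re-run the argument of Theorem~\ref{main} inline (first showing $K_{\wh X_y}$ is not psef, then using projectivity of the fiber and \cite{BDPP} for uniruledness), whereas the paper simply invokes Theorem~\ref{main} applied to $\cF$ to get a subfoliation with rationally connected leaves, hence uniruled fibers. One small point worth making explicit, as the paper does via Corollary~\ref{unricri}, is that $\wh Z_y$, being the base of a relative MRC fibration, is not uniruled and therefore has pseudo-effective canonical bundle --- this is precisely the fiberwise hypothesis required before Theorem~\ref{thm3} can be applied to $\wh s$ to conclude that $\det(\ker d\wh s)^\star$ is psef.
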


\begin{proof}
The proof follows almost the same argument as that in Theorem \ref{main}. We only sketch the proof.
	By using Proposition \ref{slope} and Theorem \ref{mainthm}, we know that $\cF$ is algebraic.
	Let $p: X\dasharrow Y$ be the meromorphic fibration induced by $\cF$.
	By applying Theorem \ref{main} to $\cF$, there exists a subfoliation $\cG \subset \cF$ such that its leaves are rationally connected. In particular, the generic fiber of $p$ is uniruled.
	Let $\sigma \in H^{2,0} (X)$ be a non-zero holomorphic $2$-form on $X$, and let $X_y$ be a generic fiber of $p$.
By the same argument as in Theorem   \ref{main}, we know that $\sigma |_{X_y} = 0$ is identically zero. 
	
	\medskip
	
	We consider the relative MRC fibration of $p$, denoted by $r: X\dasharrow Z$. We also have a natural map $s : Z \dasharrow Y$.
	By replacing $X, Z,$ and $Y$ with suitable birational models (for which we keep the same notation), we can assume that the maps $r$ and $s$ are regular.
	
	By construction, $Z_y$ is not uniruled. Then, $K_{Z_y}$ is psef by Corollary \ref{unricri}.
	We denote by $\mathcal{H} := \ker (d s)$ the foliation induced by the kernel of $s$.
	The term $r_* ((\omega+\sigma +\overline{\sigma})^{k+1})$ is a rational class, and its $(1,1)$-part has strict positivity on the generic fiber of $s$.
	Since $\sigma |_{X_y} = 0$, the $(2,0)$-part of $r_* ((\omega+\sigma +\overline{\sigma})^{k+1})$ vanishes on the generic fiber of $s$.
	Thus, we can apply Theorem \ref{thm3}. The psefness of $K_{Z_y}$ implies that $\det \mathcal{H}^\star$ is psef.
	Recall that we have a generic surjective morphism $\cF \to r^\star \mathcal H$.
	Since $\mu_{\alpha, \min}(\cF) > 0$, the psefness of $\det \mathcal{H}^\star$ implies that $\mathcal{H}$ is zero.
	Thus, the generic fiber of $p$ is rationally connected.
\end{proof}

\begin{cor} Let $p:\cX\to \mathbb D$ be a proper holomorphic submersion, where $\cX$ is a K\"ahler manifold. If the canonical bundle of the central fiber $\cX_0= p^{-1}(0)$ is pseudo-effective, then so is the canonical bundle of $\cX_t$ for any $t\in \mathbb D$.  
\end{cor}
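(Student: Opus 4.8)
The plan is to trade pseudo-effectivity of the fibrewise canonical bundle for non-uniruledness of the fibres, and then to use that uniruledness is a deformation invariant in the Kähler category. Recall first that for a \emph{compact Kähler} manifold $Z$ one has the equivalence: $K_Z$ is pseudo-effective if and only if $Z$ is not uniruled. Indeed, if $Z$ is uniruled a free rational curve $C$ produces a class $[C]\in\Mov(Z)$ with $K_Z\cdot[C]<0$, so $K_Z$ is not psef; conversely, if $K_Z$ is not psef then by the Kähler version of the Boucksom--Demailly--Paun--Peternell duality \cite{BDPP} (equivalently, by Corollary~\ref{unricri} together with the duality between $\Psef(Z)$ and $\Mov(Z)$ recalled in the appendix) there is $\alpha\in\Mov(Z)$ with $\mu_\alpha(T_Z)=-\tfrac1{\dim Z}\,K_Z\cdot\alpha>0$, hence $Z$ is uniruled. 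Applying this fibrewise, the corollary is equivalent to the statement that
\[
U:=\{\,t\in\mathbb{D}\ :\ \mathcal{X}_t\text{ is uniruled}\,\}=\varnothing .
\]
Since $\mathbb{D}$ is connected and, by hypothesis ($K_{\mathcal{X}_0}$ psef), $0\notin U$, it suffices to prove that $U$ is both open and closed in $\mathbb{D}$.

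Openness of $U$ is the classical deformation statement for free rational curves. If $C\subset\mathcal{X}_{t_0}$ is free, then its normal bundle in the total space sits in an exact sequence $0\to N_{C/\mathcal{X}_{t_0}}\to N_{C/\mathcal{X}}\to\mathcal{O}_C\to 0$ (the last term being the normal direction of the fibre), so $H^1(C,N_{C/\mathcal{X}})=0$; hence the relative Barlet cycle space of $p$ is smooth at $[C]$ of the expected relative dimension and its evaluation map is submersive over a neighbourhood of $t_0$ in $\mathbb{D}$. The deformed cycles are again free rational curves covering the neighbouring fibres, so $\mathcal{X}_t$ is uniruled for all $t$ near $t_0$.

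The closedness of $U$ is the heart of the matter. Suppose $\mathcal{X}_t$ is uniruled for $t$ in a punctured neighbourhood of some $t_0$; we must show that $\mathcal{X}_{t_0}$ is uniruled. I would argue as in the proof of Theorem~\ref{main} and Corollary~\ref{folRC}, via the relative MRC fibration: over the punctured disc one has, after a bimeromorphic modification of $\mathcal{X}$, a relative MRC fibration $r$ whose fibre over $t$ is the MRC quotient of $\mathcal{X}_t$, so the relative dimension of $r$ is strictly less than $\dim\mathcal{X}_t$. Using the compactness of the Barlet cycle space in the Kähler setting (the same input that makes the relative MRC available, cf.\ \cite{C} and the footnote in Section~5) one extends the target over $t_0$ into a proper family $\mathcal{Z}\to\mathbb{D}$ with $\dim\mathcal{Z}_{t_0}<\dim\mathcal{X}_{t_0}$ and a meromorphic map $\mathcal{X}\to\mathcal{Z}$ over $\mathbb{D}$. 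The general fibre $F$ of $\mathcal{X}_{t_0}\to\mathcal{Z}_{t_0}$ is then a degeneration of the rationally connected general fibres over the punctured disc, hence rationally chain connected; consequently the general point of $\mathcal{X}_{t_0}$ lies on a connected chain of rational curves, which forces $\mathcal{X}_{t_0}$ to be uniruled (one component of such a chain through a general point sweeps out a covering family of rational curves). An equivalent route is to take, for each nearby $t$, a covering family of rational curves in $\mathcal{X}_t$ of minimal anticanonical degree, to bound their $\omega_{\mathcal{X}}$-volume as $t\to t_0$ by the standard breaking argument, and to pass to a limiting cycle in the relatively compact part of the cycle space of $\mathcal{X}$.

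The main obstacle is precisely this closedness step: one has to control the degeneration of the covering rational curves as $t\to t_0$ so that they do not escape to infinity in the cycle space, and one has to know that the limiting chains of rational curves still witness uniruledness of the special fibre (equivalently, that rational chain connectedness is stable under specialisation). Both facts are classical in the projective case and go through in the Kähler category thanks to the compactness properties of the Barlet cycle space; these are the only non-formal ingredients, the rest being the bookkeeping above together with the equivalence ``$K_Z$ psef $\Longleftrightarrow$ $Z$ not uniruled'' used in the first paragraph.
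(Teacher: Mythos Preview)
Your reduction is exactly the paper's: use the equivalence ``$K_Z$ psef $\Longleftrightarrow$ $Z$ not uniruled'' for compact K\"ahler $Z$ (Corollary~\ref{unricri}) to reformulate the statement as deformation invariance of uniruledness in a smooth K\"ahler family, and then invoke the latter. The paper's proof is one sentence: it simply cites Fujiki \cite{Fuj} and Levine \cite{Lev} for the fact that if one fibre of $p$ is uniruled then all are.

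The only difference is that you attempt to re-prove the Fujiki--Levine result rather than cite it. Your second route for closedness (bound the $\omega_{\cX}$-volume of a covering family of minimal-degree rational curves, use compactness of the relative Barlet cycle space to extract a limiting cycle in $\cX_{t_0}$, and observe that one rational component sweeps out $\cX_{t_0}$) is precisely Fujiki's argument and is correct. Your first route, via extending the relative MRC fibration across $t_0$, is more problematic than you indicate: producing a \emph{proper} family $\mathcal Z\to\mathbb D$ together with a meromorphic $\cX\to\mathcal Z$ over $\mathbb D$ whose special fibre still has strictly smaller dimension than $\cX_{t_0}$ is not a formal consequence of cycle-space compactness, and the claim that the fibres of $\cX_{t_0}\to\mathcal Z_{t_0}$ are degenerations of the nearby rationally connected fibres needs a flatness-type input you have not supplied. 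Since you offer the cycle-space argument as an alternative and explicitly flag these as the non-formal ingredients, the proposal is fine; but if you want a self-contained proof, drop the MRC sketch and carry out the Barlet-space argument.
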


\noindent This result is a particular case of the conjecture of Siu, about the K\"ahler case of the invariance of plurigenera.

\begin{proof} This is an immediate consequence of either \cite{Fuj}, or \cite{Lev} which show that if one fibre of $p$ is uniruled, then so are all. 
\end{proof}

\smallskip

\noindent To finish this section, we propose the next question.

\begin{question}\label{q} Let $q:X\to Q$ be the rational quotient (or MRC) of $X$. Let $m\in\mathbb N^*$ and let $L$ be a psef line bundle on $X$ with an inective morphism of sheaves $L\to \otimes^m \Omega^1_X$.   Then $\nu(X,L)\leq \nu(Q,K_Q)$.
\end{question}




\section{Appendix: stability with respect to the degree induced by a Gauduchon metric}	

\noindent In this section we collect a few results concerning the semi-stability with respect to a Gauduchon metric. Actually the theory is perfectly similar to the "classical" one, in which one uses a K\"ahler metric for the definition of the degree function.
\smallskip

\noindent Let $X$ be a $n$-dimensional compact complex manifold and let $g$ be a Hermitian metric on $X$. We say that $g$ is a Gauduchon metric, if $\ddbar g^{n-1}=0$. Recall that given any Hermitian metric on a compact complex manifold, its conformal class contains a unique Gauduchon metric, as proved in \cite{Gau}.  A first result we recall here is the following criteria due to A. Lamari, which fits perfectly with our context.

\begin{lemma}\label{dual}\cite{Lam} Let $\gamma$ be a real $(1,1)$ form on $X$, such that the inequality
	\[\int_X\gamma\wedge g^{n-1}\geq 0\]
	is verified for any Gauduchon metric $g$. Then there exists $f\in L^1(X)$ such that
	\[\gamma+ dd^c f\geq 0\]
	in the sense of currents on $X$. In particular, if $d\gamma= 0$, then the cohomology class defined by it $[\gamma]$ is pseudo-effective.
\end{lemma}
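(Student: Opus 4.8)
The plan is to argue by convex duality (Hahn--Banach separation), following \cite{Lam}. Write $\mathcal{D}'^{1,1}(X,\mathbb{R})$ for the space of real $(1,1)$-currents on $X$ with the weak-$*$ topology; it is the dual of the Fr\'echet space $\mathcal{A}^{n-1,n-1}(X,\mathbb{R})$ of smooth real $(n-1,n-1)$-forms, under $\langle\sigma,\Omega\rangle=\int_X\sigma\wedge\Omega$. Inside it consider the convex cone
\[
\mathcal{C}:=\bigl\{\,T+dd^c f\ :\ T\ \text{a positive }(1,1)\text{-current},\ f\in\mathcal{D}'(X,\mathbb{R})\,\bigr\}.
\]
Since $\gamma+dd^c f=T\geq 0$ is the same as $\gamma=T+dd^c(-f)$, the conclusion "$\gamma+dd^c f\geq 0$ for some $f\in L^1(X)$" is equivalent to $\gamma\in\mathcal{C}$, the passage from a distributional to an $L^1$ solution being the usual local fact that a distribution $f$ with $dd^c f$ bounded below by a smooth form coincides a.e.\ with a function that is locally the difference of a psh function and a smooth one, hence lies in $L^1(X)$ by compactness of $X$. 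So it suffices to derive a contradiction from $\gamma\notin\mathcal{C}$.

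First I would check that $\mathcal{C}$ is weakly closed. Fix a Gauduchon metric $g_0$ on $X$, which exists by \cite{Gau}. The key point is that for $\sigma=T+dd^c f\in\mathcal{C}$,
\[
\langle\sigma,g_0^{\,n-1}\rangle=\int_X T\wedge g_0^{\,n-1}+\int_X f\,dd^c\!\bigl(g_0^{\,n-1}\bigr)=\int_X T\wedge g_0^{\,n-1},
\]
because $dd^c(g_0^{\,n-1})=0$; thus the mass of the positive part $T$ is read off from $\sigma$ itself. Hence if $\sigma_j=T_j+dd^c f_j\to\sigma$, the masses $\int_X T_j\wedge g_0^{\,n-1}=\langle\sigma_j,g_0^{\,n-1}\rangle$ stay bounded, so after extracting a subsequence $T_j\to T_\infty\geq 0$ weakly by compactness of positive currents of bounded mass; then $dd^c f_j=\sigma_j-T_j\to\sigma-T_\infty$, and since the image of $dd^c$ on currents is weakly closed --- a standard consequence of the Hodge theory of the $\partial\overline{\partial}$-complex on a compact complex manifold (finite-dimensionality of Bott--Chern/Aeppli cohomology and closedness of the ranges) --- we get $\sigma-T_\infty=dd^c f$ for some $f$, i.e.\ $\sigma\in\mathcal{C}$.

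Assume now $\gamma\notin\mathcal{C}$. Since $\mathcal{C}$ is a weakly closed convex cone and $\gamma$, viewed as a $(1,1)$-current, lies outside it, Hahn--Banach yields a nonzero smooth form $\Omega\in\mathcal{A}^{n-1,n-1}(X,\mathbb{R})$ with $\langle\gamma,\Omega\rangle<0$ and $\langle\sigma,\Omega\rangle\geq 0$ for every $\sigma\in\mathcal{C}$. Testing against arbitrary positive $(1,1)$-currents forces $\Omega$ to be a positive $(n-1,n-1)$-form, and testing against $\pm\,dd^c f$ for all $f\in\mathcal{D}'(X)$ forces $\int_X f\,dd^c\Omega=0$ for all $f$, i.e.\ $dd^c\Omega=0$. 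So $\Omega$ is a nonzero, smooth, positive, $dd^c$-closed $(n-1,n-1)$-form, possibly degenerate. I would then regularize: set $\Omega_\varepsilon:=\Omega+\varepsilon\,g_0^{\,n-1}$, which is smooth, positive definite and $dd^c$-closed, so by Michelsohn's observation that $\Phi\mapsto\Phi^{\,n-1}$ is a bijection from positive definite $(1,1)$-forms onto positive definite $(n-1,n-1)$-forms, one may write $\Omega_\varepsilon=g_\varepsilon^{\,n-1}$ for a smooth Hermitian metric $g_\varepsilon$; and $dd^c(g_\varepsilon^{\,n-1})=dd^c\Omega_\varepsilon=0$ shows $g_\varepsilon$ is Gauduchon. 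The hypothesis then gives $0\leq\int_X\gamma\wedge g_\varepsilon^{\,n-1}=\langle\gamma,\Omega\rangle+\varepsilon\langle\gamma,g_0^{\,n-1}\rangle$, and letting $\varepsilon\to 0^+$ yields $\langle\gamma,\Omega\rangle\geq 0$, contradicting $\langle\gamma,\Omega\rangle<0$. Hence $\gamma\in\mathcal{C}$, proving the first assertion. The final sentence is then immediate: if moreover $d\gamma=0$, then $\gamma$ is a smooth representative of the class $[\gamma]$, and $\gamma+dd^c f\geq 0$ exhibits a closed positive current in $[\gamma]$, so $[\gamma]$ is pseudo-effective by definition.

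The step I expect to be the main obstacle is the weak closedness of $\mathcal{C}$: one must extract the equi-boundedness of the positive parts $T_j$ from the weak limit via the pairing with the Gauduchon metric $g_0$, and one needs the weak closedness of $\mathrm{im}(dd^c)$ on currents, which ultimately rests on the (nontrivial but by now standard) Hodge theory of the Bott--Chern and Aeppli complexes on a general compact complex manifold. The remaining ingredients --- Hahn--Banach for cones, the duality between positive $(n-1,n-1)$-forms and positive $(1,1)$-currents, the Michelsohn root, Gauduchon's existence theorem, and the local regularity statement for $f$ --- are all routine.
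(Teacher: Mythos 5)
Your proposal is correct and it is exactly the Hahn--Banach duality argument that the paper alludes to (citing Lamari and saying only that "the proof consists in a clever use of Hahn--Banach theorem"): separate $\gamma$ from the weak-$*$ closed convex cone $\{T+dd^c f : T\ge 0\}$, observe that the separating functional is a smooth, positive, $dd^c$-closed $(n-1,n-1)$-form, and regularize it via $\Omega+\varepsilon g_0^{\,n-1}$ and Michelsohn's $(n-1)$-st root to produce a Gauduchon metric contradicting the hypothesis. The only point worth tightening is the closedness check for $\mathcal{C}$: since $\mathcal{D}'^{1,1}(X)$ with the weak-$*$ topology is not metrizable, one should phrase the compactness/extraction argument with nets rather than sequences (the mass bound $\int_X T_\alpha\wedge g_0^{n-1}=\langle\sigma_\alpha,g_0^{n-1}\rangle$ and the closed-range property of $dd^c$ go through verbatim), but this is a routine adjustment and not a gap in the approach.
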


\noindent The proof consists in a clever use of Hahn-Banach theorem. Therefore, in case a line bundle $L$ is not psef, we have  
\begin{equation}\label{eq11}
	\int_Xc_1(L)\wedge g^{n-1}< 0 \end{equation}
for some Gauduchon metric $g$. 
\medskip

We consider the Bott-Chern and Aeppli cohomology 
$$H^{p,q} _{BC} (X) : =\frac{(\ker \partial :  \mathcal{C}^{p,q} (X) \to  \mathcal{C}^{p+1,q} (X) )\cap (\ker \dbar :  \mathcal{C}^{p,q} (X) \to  \mathcal{C}^{p,q+1} (X))}{\Im \ddbar : \mathcal{C}^{p-1,q-1} (X) \to \mathcal{C}^{p,q} (X)} ,$$
and 
$$H^{p,q} _A (X) : =\frac{\ker \ddbar : \mathcal{C}^{p,q} (X) \to \mathcal{C}^{p+1,q+1} (X)}{(\Im \partial :  \mathcal{C}^{p-1,q} (X) \to  \mathcal{C}^{p,q} (X) )+ (\Im \dbar :  \mathcal{C}^{p,q-1} (X) \to  \mathcal{C}^{p,q} (X))} .$$ 
Then for a Gauduchon metric $g$, we have $[g^{n-1}] \in H^{n-1, n-1} _A (X, \mathbb R)$.
We define the psef cone and the movable cone as follows:
\begin{defn} 
	Let $X$ be a $n$-dimensional compact complex manifold. The movable cone $\Mov (X) \subset H^{n-1,n-1} _{A} (X, \mathbb R)$  is  the closed cone generated by $[g^{n-1}]$ for all Gauduchon metric $g$ on $X$. The psef cone $\Psef (X) \subset H^{1,1} _{BC} (X, \mathbb R)$ is the closed cone consists of the $d$-closed $(1,1)$-postive currents on $X$.
	
\end{defn}

Then Lemma \ref{dual} implies that 

\begin{cor}
	Let $X$ be a $n$-dimensional compact complex manifold. Let $\gamma\in H^{1,1} _{BC} (X)$. Then $\gamma\in \Psef (X)$ if and only if $\gamma \cdot \alpha \geq 0$
	for any $\alpha \in \Mov (X)$.
	\end{cor}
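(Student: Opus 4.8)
The plan is to prove the two implications separately: the forward implication ($\gamma\in\Psef(X)$ gives nonnegative pairing) is elementary, while the converse is exactly where Lamari's Lemma \ref{dual} enters. Before either step, one should record why the number $\gamma\cdot\alpha$ is well defined for $\gamma\in H^{1,1}_{BC}(X)$ and $\alpha\in H^{n-1,n-1}_A(X)$: represent $\gamma$ by a $\partial$- and $\dbar$-closed $(1,1)$-form $\gamma'$ (hence $d$-closed) and $\alpha$ by a $\ddbar$-closed $(n-1,n-1)$-current; replacing $\gamma'$ by $\gamma'+\ddbar u$ changes $\int_X\gamma'\wedge\alpha$ by $\int_X\ddbar u\wedge\alpha=\pm\int_X u\,\ddbar\alpha=0$ via Stokes, and symmetrically a change of representative of $\alpha$ is absorbed because $\gamma'$ is $d$-closed. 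In particular $\gamma\cdot[g^{n-1}]$ is meaningful for every Gauduchon metric $g$.

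For the direction $\gamma\in\Psef(X)\Rightarrow\gamma\cdot\alpha\ge 0$, write $\gamma$ as the Bott–Chern class of a $d$-closed positive $(1,1)$-current $T$. For any Gauduchon metric $g$ the form $g^{n-1}$ is a positive $(n-1,n-1)$-form, so $T\wedge g^{n-1}$ is a positive measure and $\gamma\cdot[g^{n-1}]=\int_X T\wedge g^{n-1}\ge 0$. Since $\Mov(X)$ is, by definition, the closed cone generated by the classes $[g^{n-1}]$ and since $\alpha\mapsto\gamma\cdot\alpha$ is linear and continuous, nonnegativity propagates to every $\alpha\in\Mov(X)$.

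For the converse, assume $\gamma\cdot\alpha\ge 0$ for all $\alpha\in\Mov(X)$. Fixing a smooth $d$-closed $(1,1)$-representative $\gamma'$ of $\gamma$, the hypothesis in particular gives $\int_X\gamma'\wedge g^{n-1}\ge 0$ for every Gauduchon metric $g$, which is precisely the assumption of Lemma \ref{dual}. That lemma yields $f\in L^1(X)$ with $\gamma'+dd^c f\ge 0$ as a current; this current is positive, $d$-closed (because $d\gamma'=0$), and its Bott–Chern class equals $\gamma$ since $dd^c f$ lies in the image of $\ddbar$. Hence $\gamma\in\Psef(X)$, completing the proof. The only nontrivial ingredient is the appeal to Lemma \ref{dual}, i.e. the Hahn–Banach argument of Lamari already recalled above; everything else is routine bookkeeping with representatives, so I do not anticipate a genuine obstacle once that lemma is in hand.
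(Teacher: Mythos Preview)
Your proof is correct and follows exactly the approach the paper intends: the paper simply states that the corollary is implied by Lemma~\ref{dual} (Lamari), and you have spelled out both the elementary forward direction and the application of that lemma for the converse, together with the routine check that the Bott--Chern/Aeppli pairing is well defined. There is nothing to add.
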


\smallskip

Now we recall the degree and slope functions.

\begin{defn} Let $X$ be a compact complex manifold, and let $\cE$ be a coherent, torsion-free sheaf
	of positive rank on $X$. Let $g$ be a Gauduchon metric and let $\alpha :=g^{n-1}$.We define:
	\begin{itemize}
		\item The slope $\mu_\alpha (\cE) :=\frac{1}{\rank \cE} \int_X c_1 (\cE) \wedge g^{n-1}$.
		\smallskip
		
		\item The maximum slope of $\cE$ 
		\[\mu_{\alpha, \rm max}(\cE) := \sup\{\mu_\alpha(\cF) : \cF \subset \cE, \hbox {any nonzero coherent sub-sheaf } \cF \}\]
	
		\item The minimum slope of $\cE$
		\[\mu_{\alpha, \rm min}(\cE) := \inf\{\mu_\alpha(\cQ) : \cE\to \cQ\to 0,\}\]
		where the quotient sheaf $\cQ$ here is coherent, non-zero and torsion-free.
	\end{itemize}
\end{defn}

\smallskip

\noindent We quote next the following result, which implies that many of the main statements in stability theory are still valid in our current context.

\begin{theorem}\cite{Bru1}, \cite{Bru2}\label{max} Let $(X, g)$ be a compact complex manifold endowed with a Gauduchon metric $g$. Then given a coherent, torsion-free sheaf $\cE$ on $X$, there exists a unique maximal $\alpha$-destabilising subsheaf $\cF\subset \cE$.	In particular, $\cF$ is $\alpha$-semistable.
\end{theorem}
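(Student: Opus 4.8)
The plan is to mimic the classical Harder--Narasimhan argument, with the Gauduchon degree $\deg_\alpha(\cF) = \int_X c_1(\cF)\wedge g^{n-1}$ playing the role of the usual degree. The first point to establish is that $\mu_{\alpha,\max}(\cE)$ is finite: for this one uses that any coherent subsheaf $\cF\subset\cE$ is determined up to saturation by its rank and by $c_1(\cF)$, and that $c_1$ of a saturated subsheaf is bounded above in the appropriate sense. Concretely, I would first reduce to the case where $\cF$ is saturated in $\cE$ (saturating only increases the degree, since the quotient by the torsion is supported in codimension $\geq 2$ but $c_1$ can only go up by an effective class; here one invokes that $\deg_\alpha$ of an effective divisor is $\geq 0$ because $g^{n-1}$ is a positive $(n-1,n-1)$-form). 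Then, fixing the rank $r'=\rank\cF$, one bounds $\deg_\alpha(\cF)$ from above: a saturated subsheaf of rank $r'$ gives a sub-line-bundle $\det\cF \hookrightarrow \Lambda^{r'}\cE$ over the locus where things are locally free, and using a fixed Hermitian metric on $\Lambda^{r'}\cE$ together with the curvature bound, the degree is controlled. This gives finiteness of $\mu_{\alpha,\max}$.

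Next I would extract a maximal destabilizing subsheaf. Choose a sequence $\cF_i\subset\cE$ with $\mu_\alpha(\cF_i)\to\mu_{\alpha,\max}(\cE)$; among those achieving (in the limit) the sup, pick ones of maximal rank, and then observe that the \emph{sum} $\cF_i+\cF_j\subset\cE$ satisfies $\mu_\alpha(\cF_i+\cF_j)\geq \min(\mu_\alpha(\cF_i),\mu_\alpha(\cF_j))$ by the standard exact-sequence computation $0\to \cF_i\cap\cF_j\to\cF_i\oplus\cF_j\to\cF_i+\cF_j\to 0$ together with $\mu_\alpha(\cF_i\cap\cF_j)\leq\mu_{\alpha,\max}(\cE)$ and additivity of rank and degree in short exact sequences. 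Hence one can assume the $\cF_i$ form an increasing chain (replace $\cF_1,\dots,\cF_k$ by $\cF_1+\dots+\cF_k$), and since ranks are bounded the chain stabilizes in rank; once the rank is fixed, the saturations form an increasing chain of coherent subsheaves of a fixed sheaf with nondecreasing degree bounded above, so by Noetherianity (on the saturations, which are reflexive and determined by a divisor class that is bounded and takes values in a discrete-enough set after fixing rank) it stabilizes to a subsheaf $\cF$ with $\mu_\alpha(\cF)=\mu_{\alpha,\max}(\cE)$ and of maximal rank among such. Uniqueness: if $\cF'$ were another, then $\cF+\cF'$ would have slope $\geq\mu_{\alpha,\max}(\cE)$, hence $=\mu_{\alpha,\max}(\cE)$, and rank $\geq\rank\cF$ with equality by maximality, forcing $\cF=\cF'=\cF+\cF'$. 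Semistability of $\cF$ is immediate: a subsheaf $\cF''\subset\cF$ with $\mu_\alpha(\cF'')>\mu_\alpha(\cF)=\mu_{\alpha,\max}(\cE)$ would contradict the definition of $\mu_{\alpha,\max}$.

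The main obstacle I anticipate is the finiteness of $\mu_{\alpha,\max}$ together with the Noetherian/boundedness input: in the Kähler (or merely complex) setting one does not have GIT or ampleness to invoke, so one must argue the boundedness of degrees of saturated subsheaves of fixed rank directly from Chern--Weil theory, i.e. from the fact that $\deg_\alpha$ of a torsion-free sheaf $\cF$ equals $\int_X \frac{i}{2\pi}\Theta_{\det\cF}\wedge g^{n-1}$ for any smooth metric on $\det\cF$ over the locally free locus, combined with a uniform curvature bound coming from a fixed metric on a fixed bundle ($\Lambda^{r'}\cE$ modulo torsion). This is exactly the content of \cite{Bru1}, \cite{Bru2}, which is why the theorem is quoted from there; I would simply cite that reference for the technical heart and present the sum/maximality bookkeeping above as the structural argument.
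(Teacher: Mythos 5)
The paper offers no proof of this statement: it is quoted directly from Bruasse \cite{Bru1}, \cite{Bru2}, and the only commentary the paper adds is that the Gauduchon degree of a torsion-free sheaf is computed via an improper integral over the locally free locus, with convergence and all remaining technicalities delegated to those references. Your proposal does the same in substance---it cites \cite{Bru1}, \cite{Bru2} for the ``technical heart''---so at the level of logical structure you are consistent with the paper.

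Still, a caution about the scaffolding you supply, because it quietly treats the Gauduchon case as if it were Kähler. First, the asserted inequality $\mu_\alpha(\cF_i+\cF_j)\geq\min\big(\mu_\alpha(\cF_i),\mu_\alpha(\cF_j)\big)$ does not follow from $\mu_\alpha(\cF_i\cap\cF_j)\leq\mu_{\alpha,\max}(\cE)$ alone: the exact-sequence arithmetic requires $\mu_\alpha(\cF_i\cap\cF_j)\leq\max\big(\mu_\alpha(\cF_i),\mu_\alpha(\cF_j)\big)$. When $\cF_i,\cF_j$ only approximately maximize, the intersection can have slope strictly larger than both, and the sum's slope degrades; so one cannot build an increasing maximizing chain by summing an arbitrary maximizing sequence. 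The inequality is available once attainment of the supremum is already known and one restricts to achievers, which is where it is needed (uniqueness and semistability), but it does not itself deliver attainment. Second, and more substantively, the hedge ``discrete-enough'' is precisely where the Gauduchon setting diverges from the projective one: the Gauduchon degree is a well-defined linear functional on $\Pic(X)$ but is not a topological invariant, so the set of degrees of saturated subsheaves of a fixed rank need not be discrete, and attainment of the supremum does not follow from boundedness together with an ascending chain argument. That attainment is part of what Bruasse actually establishes, so quoting \cite{Bru1}, \cite{Bru2} is the right move; the sketch just should not suggest that the usual integrality/discreteness shortcut would close the gap.
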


\noindent Actually, it is established in the reference above that one can measure the degree of a torsion-free coherent sheaf $\cF$ via an improper integral (over the set by restriction to which the sheaf $\cF$ is a vector bundle). The convergence of the said integral and many other technicalities needed in the proof are fully discussed in \cite{Bru1}, \cite{Bru2}. 
\medskip

\noindent Given two coherent, torsion-free sheaves $\cE_1$ and $\cE_2$, one defines their torsion-free tensor product by the usual formula
\[\cE_1\wh\otimes \cE_2:= \cE_1\otimes\cE_2/\tor,\]
and then the following statement holds true.

\begin{theorem}\cite{CPet}\label{sum} Let $(X, g)$ be a compact complex manifold endowed with a Gauduchon metric $g$. Then we have the equality
	\[\mu_{\alpha, \rm max}(\cE_1\wh\otimes \cE_2)= \mu_{\alpha, \rm max}(\cE_1)+ \mu_{\alpha, \rm max}(\cE_2)\]
	i.e. the maximum slope is additive.  Similar identities hold true if we replace the reflexive tenosr with $(\Lambda^2 \cE)^{\star\star}$ or $\mu_{\alpha, \rm min}$.
\end{theorem}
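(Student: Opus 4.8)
The plan is to establish the additivity of $\mu_{\alpha,\max}$ for $\wh\otimes$ first, and then obtain the other two assertions ($\mu_{\alpha,\min}$ and $(\Lambda^2\cE)^{\star\star}$) by rerunning the same argument with the opposite extremal graded pieces, resp. by passing the Harder--Narasimhan filtration of $\cE$ through $\Lambda^2$. Throughout I would use the elementary facts that $c_1$ and $\rk$ are additive in short exact sequences and insensitive to torsion subsheaves or quotients; in particular $\mu_\alpha$ of the middle term of an extension is a convex combination of the slopes of the sub and the quotient, and $\mu_\alpha(\cF_1\wh\otimes\cF_2)=\mu_\alpha(\cF_1)+\mu_\alpha(\cF_2)$ for torsion-free $\cF_1,\cF_2$, since $c_1(\cF_1\otimes\cF_2)=\rk(\cF_2)\,c_1(\cF_1)+\rk(\cF_1)\,c_1(\cF_2)$. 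The inequality ``$\ge$'' is then immediate: if $\cF_i\subset\cE_i$ is the maximal $\alpha$-destabilizing subsheaf provided by Theorem~\ref{max} (which is $\alpha$-semistable, so $\mu_\alpha(\cF_i)=\mu_{\alpha,\max}(\cE_i)$), then $\cF_1\wh\otimes\cF_2$ injects (as one checks directly) into $\cE_1\wh\otimes\cE_2$, whence $\mu_{\alpha,\max}(\cE_1\wh\otimes\cE_2)\ge\mu_\alpha(\cF_1)+\mu_\alpha(\cF_2)=\mu_{\alpha,\max}(\cE_1)+\mu_{\alpha,\max}(\cE_2)$.

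For the inequality ``$\le$'' I would reduce everything to the key statement that \emph{the torsion-free tensor product of two $\alpha$-semistable torsion-free sheaves is again $\alpha$-semistable}. Granting this, take the Harder--Narasimhan filtrations of $\cE_1$ and $\cE_2$ — their existence and formal properties over a Gauduchon manifold are furnished by \cite{Bru1,Bru2} — with semistable graded pieces $\mathrm{gr}^{(1)}_i$, $\mathrm{gr}^{(2)}_j$ of slopes $\le\mu_{\alpha,\max}(\cE_1)$, resp. $\le\mu_{\alpha,\max}(\cE_2)$. Tensoring the two filtrations and dividing out torsion equips $\cE_1\wh\otimes\cE_2$ with a filtration whose successive quotients are torsion-free quotients of $\mathrm{gr}^{(1)}_i\wh\otimes\mathrm{gr}^{(2)}_j$; by the key statement each of these is semistable of slope $\le\mu_{\alpha,\max}(\cE_1)+\mu_{\alpha,\max}(\cE_2)$. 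A sheaf carrying a filtration with semistable graded pieces all of slope $\le\mu$ has $\mu_{\alpha,\max}\le\mu$ (induction on the length of the filtration: any subsheaf meets the penultimate step in a subsheaf of slope $\le\mu$ by induction, maps to a subsheaf of the top semistable piece, again of slope $\le\mu$, and its slope is a convex combination of the two). Hence $\mu_{\alpha,\max}(\cE_1\wh\otimes\cE_2)\le\mu_{\alpha,\max}(\cE_1)+\mu_{\alpha,\max}(\cE_2)$, as wanted.

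The hard part is precisely the key statement: every elementary filtration manipulation one might attempt for ``$\le$'' ultimately bottoms out at ``semistable $\wh\otimes$ semistable $=$ semistable'', and this genuinely requires analytic input. I would obtain it from the Gauduchon-metric analogue of the Kobayashi--Hitchin correspondence: an $\alpha$-semistable torsion-free sheaf $\cE$ admits on its locally free locus an \emph{approximate} Hermite--Einstein metric, i.e. for each $\epsilon>0$ a smooth Hermitian metric $h_\epsilon$ with $\|\Lambda_g(i\Theta_{h_\epsilon})-c\cdot\Id\|_{C^0}<\epsilon$, where $c$ is the topological constant attached to $\mu_\alpha(\cE)$ and $g$ (the Bando--Siu result adapted to the Gauduchon setting; see \cite{Bru1,Bru2,CPet}). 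Given such metrics $h_\epsilon^{(1)}$, $h_\epsilon^{(2)}$ on $\cE_1$, $\cE_2$, the product metrics $h_\epsilon^{(1)}\otimes h_\epsilon^{(2)}$ are approximate Hermite--Einstein on $\cE_1\otimes\cE_2$ with constant $c_1+c_2$; and the easy direction of the correspondence — approximate Hermite--Einstein $\Rightarrow$ semistable, via the Chern--Weil expression for $\mu_\alpha$ of a coherent subsheaf in terms of its second fundamental form together with the sub-mean-value inequality for $\Lambda_g(i\Theta)$ integrated against $g^{n-1}$ — then shows that $\cE_1\wh\otimes\cE_2$ is $\alpha$-semistable of slope $\mu_\alpha(\cE_1)+\mu_\alpha(\cE_2)$.

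Finally, the two variants. The very same Harder--Narasimhan argument, now read off the minimal-slope graded pieces (and tensoring the minimal-slope quotients for the reverse inequality), yields the additivity of $\mu_{\alpha,\min}$. For $(\Lambda^2\cE)^{\star\star}$: passing the Harder--Narasimhan filtration of $\cE$ through $\Lambda^2$ exhibits a filtration of $(\Lambda^2\cE)^{\star\star}$ whose graded pieces are torsion-free quotients of $\mathrm{gr}_i\wh\otimes\mathrm{gr}_j$ for $i<j$ (slope $\mu_i+\mu_j$) and of $(\Lambda^2\mathrm{gr}_i)^{\star\star}$ when $\rk\,\mathrm{gr}_i\ge 2$ (slope $2\mu_i$), all semistable — since $\Lambda^2$ of a semistable sheaf is semistable by the key statement applied to $\cE\wh\otimes\cE$. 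Hence $2\mu_{\alpha,\min}(\cE)\le\mu_{\alpha,\min}\big((\Lambda^2\cE)^{\star\star}\big)$ and $\mu_{\alpha,\max}\big((\Lambda^2\cE)^{\star\star}\big)\le 2\mu_{\alpha,\max}(\cE)$; in particular $(\Lambda^2\cE)^{\star\star}$ has positive minimal slope as soon as $\cE$ does, which is the positivity invoked in the proof of Theorem~\ref{main}.
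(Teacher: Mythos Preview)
Your proposal is correct and follows essentially the same route the paper points to: the paper does not give a proof but refers to the appendix by Toma in \cite{CPet}, noting that the argument rests on the Hermite--Einstein correspondence for stable bundles over Gauduchon manifolds \cite{L-Y} combined with the Harder--Narasimhan filtration. Your reduction to ``semistable $\wh\otimes$ semistable is semistable'' via the HN filtration, followed by the analytic input from (approximate) Hermite--Einstein metrics, is exactly this strategy; the only cosmetic difference is that you invoke approximate HE metrics for semistable sheaves directly, whereas the cited references typically pass through a Jordan--H\"older filtration to reduce to the stable case and then use the genuine HE metric of Li--Yau.
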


\noindent This statement relies heavily on the existence of Hermite-Einstein metrics for stable vector bundles, cf. \cite{L-Y} combined with the properties of Harder-Narasimhan filtration.
For the proof we refer to the appendix by by M. Toma in \cite{CPet}. See also \cite{T21} for other results on the topics discussed in this section.
As a consequence of the above two theorems, we have
\begin{cor}\label{foliation}\cite[Lemma 4.10]{CP19}
	Let $(X, g)$ be a compact complex manifold and let $\cF\subset T_X$ be a holomorphic foliation.  Let $g$ be a Gauduchon metric and $\alpha := g^{n-1}$. 
	Let $\cG \subset \cF$ be the unique maximal $\alpha$-destabilising subsheaf of $\cF$.
	If $\mu_\alpha (\cG)>0$, then $\cG$ is a sub-foliation.	 
\end{cor}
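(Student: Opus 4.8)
The plan is to exhibit the obstruction to $[\cG,\cG]\subset\cG$ as an $\cO_X$-linear alternating tensor and then to show this tensor must vanish for slope reasons; this follows the argument of \cite[Lemma 4.10]{CP19}.

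First I would dispose of the case $\cG=\cF$ (nothing to prove) and assume $\cG\subsetneq\cF$. Let $X_0\subset X$ be the Zariski open subset, with $\codim_X(X\setminus X_0)\geq 2$, over which $\cG$, $\cF$ and $\cF/\cG$ are all locally free. Since $\cF$ is a foliation we have $[\cF,\cF]\subset\cF$, so the Lie bracket of two local sections of $\cG$, regarded inside $\cF$ and then projected to $\cF/\cG$, is $\cO_{X_0}$-bilinear and alternating: indeed, by the Leibniz rule $[fu,v]=f[u,v]-(v\cdot f)u$ and $(v\cdot f)u\in\cG$, the non-tensorial term disappears modulo $\cG$. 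This yields a bundle morphism $\Lambda^2\cG|_{X_0}\to(\cF/\cG)|_{X_0}$, which I would extend across the codimension-$\geq2$ locus to a morphism of reflexive sheaves $\Psi\colon\Lambda^{[2]}\cG\to\cG'$ where $\cG':=(\cF/\cG)^{\star\star}$ (a homomorphism between reflexive sheaves defined off a codimension-$2$ set extends, since $\mathcal{H}om$ into a reflexive sheaf is reflexive). The key reduction is that $\Psi=0$ implies $[\cG,\cG]\subset\cG$ on $X_0$, hence on all of $X$, because $\cG$ is saturated in $\cF$ and $\cF$ in $T_X$, so that $T_X/\cG$ is torsion-free.

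To prove $\Psi=0$ I would compare slopes. By Theorem \ref{max} the sheaf $\cG$ is $\alpha$-semistable, so Theorem \ref{sum} (in its form for $(\Lambda^2\cE)^{\star\star}$) shows that $\Lambda^{[2]}\cG$ is $\alpha$-semistable of slope $2\mu_\alpha(\cG)$, in particular $\mu_{\alpha,\min}(\Lambda^{[2]}\cG)=2\mu_\alpha(\cG)>0$. On the other side, since $\cG$ is the maximal $\alpha$-destabilising subsheaf of $\cF$, it is the first term of the Harder--Narasimhan filtration of $\cF$ (available in the Gauduchon setting, \cite{Bru1,Bru2}), whose successive slopes strictly decrease; hence $\mu_{\alpha,\max}(\cF/\cG)<\mu_\alpha(\cG)$, and passing to the reflexive hull does not affect maximal slopes, so $\mu_{\alpha,\max}(\cG')<\mu_\alpha(\cG)$. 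If $\Psi\neq0$ then $\cI:=\im\Psi$ is a nonzero torsion-free subsheaf of $\cG'$ and a quotient of $\Lambda^{[2]}\cG$, so
\[
2\mu_\alpha(\cG)=\mu_{\alpha,\min}(\Lambda^{[2]}\cG)\leq\mu_{\alpha,\min}(\cI)\leq\mu_{\alpha,\max}(\cI)\leq\mu_{\alpha,\max}(\cG')<\mu_\alpha(\cG),
\]
forcing $\mu_\alpha(\cG)<0$, which contradicts the hypothesis. Therefore $\Psi=0$ and $\cG$ is a sub-foliation.

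I expect the only genuinely non-formal ingredients to be the two slope bounds entering this chain of inequalities: that $\Lambda^{[2]}$ of an $\alpha$-semistable sheaf is again $\alpha$-semistable with twice the slope (this rests on Theorem \ref{sum}, hence ultimately on the existence of Hermite--Einstein metrics), and that the maximality of the destabilising subsheaf forces $\mu_{\alpha,\max}(\cF/\cG)<\mu_\alpha(\cG)$. The construction of $\Psi$ and its extension across codimension $2$, and the final descent of $[\cG,\cG]\subset\cG$ from $X_0$ to $X$, are routine.
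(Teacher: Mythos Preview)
Your proof is correct and follows essentially the same approach as the paper's: construct the O'Neill tensor $\Lambda^2\cG\to\cF/\cG$ induced by the Lie bracket (using that $\cF$ is a foliation), and then kill it via the slope inequality $\mu_{\alpha,\min}(\Lambda^{[2]}\cG)=2\mu_\alpha(\cG)>\mu_\alpha(\cG)>\mu_{\alpha,\max}(\cF/\cG)$, the last inequality coming from maximality of $\cG$. You are simply more explicit than the paper about the tensoriality of the bracket modulo $\cG$, the extension across codimension $2$, and the passage to reflexive hulls, but none of this changes the argument.
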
	

\begin{proof}
	We consider the natural morphism induced by the Lie bracket: $\Lambda^2 \cG \to \cF /\cG$. Since $\cF$ is a foliation, the mrophism is well defined.
	Since $\cG$ is the maximal $\alpha$-destabilising subsheaf, $\cG$ is $\alpha$-semistable.  Together with the assumption $\mu_\alpha (\cG)>0$, we know that $\mu_{\alpha, min} (\Lambda^2 \cG)> \mu_{\alpha, max} (\cF /\cG)$. Therefore the morphism $\Lambda^2 \cG \to \cF /\cG$ is zero. In other words, $\cG$ is a foliation.
\end{proof}

	\begin{proposition}\label{slope}
	Let $X$ be a compact complex manifold, and let $\mathcal{F}$ be a reflexive sheaf on $X$.	
	 If $\cF$ is psef in the sense of Definition \ref{anapsf}, then $\mu_{\alpha, \max} (\mathcal{F}) \geq 0$ for any mobile class $\alpha \in \Mov (X)$. 
	
	Equivalently, if $\mu_{\alpha, \min} (\mathcal{F}) > 0$ for some $\alpha\in \Mov (X)$, then $\cF^*$ is not psef.
\end{proposition}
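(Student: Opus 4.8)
The plan is to prove the contrapositive of the first assertion, which — via the identity $\mu_{\alpha,\min}(\mathcal{F})=-\mu_{\alpha,\max}(\mathcal{F}^{\star})$ valid for a reflexive sheaf — is exactly the equivalent statement: assuming $\mu_{\alpha,\max}(\mathcal{F})<0$ for some $\alpha=g^{n-1}\in\Mov(X)$, we show that $\mathcal{O}_{\mathbb{P}(\mathcal{F}|_{X_0})}(1)$ carries no metric $h$ with $i\Theta_h(\mathcal{O}_{\mathbb{P}(\mathcal{F}|_{X_0})}(1))\geq-\epsilon_0\omega_Y$ once $\epsilon_0>0$ is small enough, i.e.\ $\mathcal{F}$ is not psef in the sense of Definition \ref{anapsf}. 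The whole argument runs parallel to \cite[\S4]{Ou}; the essential new point is that the non-compactness of $\mathbb{P}(\mathcal{F}|_{X_0})$ forces us to carry out all the current-theoretic estimates on the base $X$ rather than on a desingularization of $\mathbb{P}(\mathcal{F})$.

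First I would reduce to the semistable situation. Let $\mathcal{Q}$ be the minimal $\alpha$-destabilizing quotient of $\mathcal{F}$ furnished by the Harder--Narasimhan filtration (Theorem \ref{max}): it is torsion-free, $\alpha$-semistable, and $\mu_\alpha(\mathcal{Q})=\mu_{\alpha,\min}(\mathcal{F})\leq\mu_{\alpha,\max}(\mathcal{F})<0$. On the open set $X_1\subset X_0$ where $\mathcal{Q}$ is also locally free — whose complement still has codimension $\geq2$, since $\mathcal{Q}$ is torsion-free — the surjection $\mathcal{F}|_{X_1}\twoheadrightarrow\mathcal{Q}|_{X_1}$ induces an embedding $\mathbb{P}(\mathcal{Q}|_{X_1})\hookrightarrow\mathbb{P}(\mathcal{F}|_{X_1})$ along which $\mathcal{O}_{\mathbb{P}(\mathcal{F})}(1)$ restricts to $\mathcal{O}_{\mathbb{P}(\mathcal{Q})}(1)$; restricting a (suitably perturbed, so as to have analytic singularities) near-nef metric to this subvariety shows that $\mathcal{Q}^{\star\star}$ is again psef in the sense of Definition \ref{anapsf}. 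Refining once more through a Jordan--Hölder filtration, we may assume $\mathcal{F}$ is reflexive, $\alpha$-stable, and $\mu_\alpha(\mathcal{F})=-\delta<0$. Now, following \cite[\S4]{Ou}, one equips $\mathcal{F}$ with its Gauduchon--Hermite--Einstein metric (the existence statement being the analytic input already invoked for Theorem \ref{sum}, in the Bando--Siu form over the locally free locus, cf.\ \cite{L-Y}), whose curvature contracts to the constant endomorphism $\propto-\delta\cdot\mathrm{Id}$ against $g$. From the Fubini--Study metric it induces on $\mathcal{O}_{\mathbb{P}(\mathcal{F})}(1)$, together with the hypothetical metrics $h_\epsilon$, one builds a $d$-closed positive $(1,1)$-current $S_\epsilon$ on $X_0$ by pushing forward along $\pi$; since $X\setminus X_0$ has codimension $\geq2$, $S_\epsilon$ extends to a $d$-closed positive current on $X$ by El Mir's theorem (Proposition \ref{listproperty}), and the Hermite--Einstein normalization keeps the pairing $\{S_\epsilon\}\cdot\alpha$ controlled by $r\mu_\alpha(\mathcal{F})+O(\epsilon)$. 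On the other hand, $S_\epsilon\geq0$ and $g^{n-1}\geq0$ give $\{S_\epsilon\}\cdot\alpha=\int_XS_\epsilon\wedge g^{n-1}\geq0$, whence $\delta\leq O(\epsilon)$; letting $\epsilon\to0$ yields $\delta\leq0$, the desired contradiction.

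The step I expect to be the main obstacle is exactly the one that is trivial in the algebraic, strong-psef setting: producing from the near-nef metrics on the non-compact $\mathbb{P}(\mathcal{F}|_{X_0})$ honest closed positive currents with controlled cohomology class, given that these metrics need not extend to a desingularization of $\mathbb{P}(\mathcal{F})$ (this is precisely why Definition \ref{anapsf} is a priori weaker than \cite[Def.\ 4.1]{Ou} and \cite{HP19}). The device that makes it work is never to extend anything across a codimension-one locus: one restricts $h_\epsilon$ to subvarieties that remain inside $\mathbb{P}(\mathcal{F}|_{X_1})$ and, above all, pushes forward to the base $X$ — where the relevant indeterminacy locus has codimension $\geq2$ — before invoking any extension result, so that only the El Mir and support theorems over codimension $\geq2$ loci (Proposition \ref{listproperty}) are needed; this is the same mechanism already used in Lemma \ref{controllelong} and Lemma \ref{controldirectimage}. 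The remaining ingredients — semistability of the relevant restrictions, the precise bookkeeping of the class of $S_\epsilon$ in terms of the Hermite--Einstein curvature, and the degenerate variant in which $\det\mathcal{Q}$ is handled directly — are entirely as in \cite[\S4]{Ou} and \cite[\S4.1]{CP19}.
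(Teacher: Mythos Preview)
Your approach differs substantially from the paper's, and contains a genuine gap in the reduction step.

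The flaw is the claim that pseudo-effectivity passes to quotients: you assert that restricting $h_\epsilon$ to $\mathbb{P}(\mathcal{Q}|_{X_1})\hookrightarrow\mathbb{P}(\mathcal{F}|_{X_1})$ shows $\mathcal{Q}^{\star\star}$ is again psef. This is false in general, even on a compact projective curve with the ordinary notion of psef. Take $\mathcal{F}=\mathcal{O}_{\mathbb{P}^1}(-2)\oplus\mathcal{O}_{\mathbb{P}^1}(1)$: then $\mathcal{O}_{\mathbb{P}(\mathcal{F})}(1)$ is big (hence psef), yet for the quotient $\mathcal{Q}=\mathcal{O}(-2)$ the section $\mathbb{P}(\mathcal{Q})$ is precisely the negative section of the Hirzebruch surface, along which every closed positive current in $c_1(\mathcal{O}_{\mathbb{P}(\mathcal{F})}(1))$ must carry a pole; no regularization makes the restriction a metric with bounded-below curvature, simply because $\mathcal{O}(-2)$ is not psef. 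The singular locus of $h_\epsilon$ (even after Demailly approximation) can and typically will contain $\mathbb{P}(\mathcal{Q})$ when $\mathcal{Q}$ is the minimal-slope quotient. Your proposed device---pushing forward to $X$ before extending---does not help here, since the restriction takes place upstairs on $\mathbb{P}(\mathcal{F})$ before any pushforward. Without this reduction you cannot invoke the Hermite--Einstein metric, and the remainder of the outline collapses.

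The paper avoids quotients altogether. Working on a modification $f:\widehat X\to X$ on which $E:=f^*\mathcal{F}/\mathrm{Tor}$ is locally free, it forms the direct images
\[
\mathcal{G}_m:=\pi_*\big(K_{\mathbb{P}(E)/\widehat X}\otimes\mathcal{O}_{\mathbb{P}(E)}(m+r+1)\otimes\mathcal{I}(mh_\epsilon)\big).
\]
The multiplier ideal $\mathcal{I}(mh_\epsilon)$ is well-defined for an arbitrary singular metric---no regularization needed---and the positivity of twisted direct images \cite{PT18} gives $i\Theta(\mathcal{G}_m)\geq -(r+2)\,\omega_{\widehat X}\cdot\mathrm{Id}$ on the locally free locus. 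One then identifies $\mathcal{G}_m$ with a nonzero \emph{sub}sheaf of $\det\mathcal{F}\otimes(\otimes^{m+1}\mathcal{F})$ over $X_0$, extends across codimension two, and obtains $\mu_{\alpha,\max}(\otimes^{m+1}\mathcal{F})\geq C$ with $C$ independent of $m$; Theorem~\ref{sum} then yields $\mu_{\alpha,\max}(\mathcal{F})\geq C/(m+1)\to 0$. The key is to manufacture \emph{subsheaves} of tensor powers rather than pass to quotients---this is the direction in which psef-type positivity actually propagates.
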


\begin{proof}
	The proof follows the same idea as in \cite[Section 4]{Ou}. For the reader's convenience, we provide a complete proof here. Thanks to Lemma \ref{dual}, to prove the proposition, it is sufficient to show that $\mu_{\alpha, \max} (\mathcal{F}) \geq 0$, where $\alpha := g^{n-1}$ and $g$ is a Gauduchon metric.
	
	Let $f: \widehat{X} \to X$ be a bimeromorphism such that $E:= f^* \cF / \text{Tor}$ is locally free on $\widehat{X}$. Set $\widehat{X}_0 := f^{-1} (X_0)$. By further blowing up $\widehat{X}$ if necessary, we can assume that there is a bimeromorphism $p: \mathbb{P} (E) \to Y$. Since $f$ is an isomorphism over $\widehat{X}_0$ and $p^*\omega_Y$ is dominated by some hermitian metric $\omega$ on $\mathbb{P} (E)$, 
	$h_\epsilon$ induces a metric on $\mathcal{O}_{\mathbb{P} (E |_{ \widehat{X}_0})} (1)$, still denoted by $h_\epsilon$, such that  
	\[
	i\Theta_{h_\epsilon} (\mathcal{O}_{\mathbb{P} (E |_{ \widehat{X}_0})} (1)) \geq -\epsilon \omega \quad \text{on } \mathbb{P} (E |_{ \widehat{X}_0}) .
	\]
	
	Let $r$ be the rank of $E$, and let $m \in \mathbb{N}^*$. Define $\pi: \mathbb{P} (E) \to \widehat{X}$ as the natural projection. We consider the direct image sheaf
	\[
	\cG_m := \pi_* (K_{\mathbb{P} (E)/ \widehat{X}} \otimes \mathcal{O}_{\mathbb{P} (E)} (m+r+1) \otimes \mathcal{I}(m h_\epsilon))
	\]
	on $\widehat{X}_0$. By using \cite[Lemma 4.4]{CCP}, we know that $\cG_m$ is nontrivial. Moreover, since $\mathcal{O}_{\mathbb{P} (E)} (1)$ is $\pi$-ample, we can find a smooth metric $h_0$ on $\mathcal{O}_{\mathbb{P} (E)} (1)$ such that 
	\[
	i\Theta_{h_0} (\mathcal{O}_{\mathbb{P} (E)} (1)) + \pi^* \omega_{\widehat{X}} > 0,
	\]
	where $\omega_{\widehat{X}}$ is a hermitian metric on $\widehat{X}$. Now we equip $\mathcal{O}_{\mathbb{P} (E)} (m+r+1)$ with the metric $h_\epsilon^m \cdot h_0^{r+1}$. For sufficiently small $\epsilon$, we thus obtain
	\[
	i\Theta_{h_\epsilon^m \cdot h_0^{r+1}} (\mathcal{O}_{\mathbb{P} (E)} (m+r+1)) + (r+2)\pi^* \omega_{\widehat{X}} > 0.
	\]
	Then $h_\epsilon^m \cdot h_0^{r+1}$ induces an $L^2$-metric $h_{\cG_m}$ on $\cG_m$, and by \cite{PT18}, the curvature satisfies
	\begin{equation}\label{curl2}
		i\Theta_{h_{\cG_m}} (\cG_m) \geq - (r+2) \omega_{\widehat{X}} \cdot \Id \quad \text{on } \widehat{X}_0.
	\end{equation}
	
	Since $f$ is an isomorphism over $\widehat{X}_0$, $\cG_m$ is isomorphic to a nonzero subsheaf of $\det \cF \otimes (\otimes^{m+1} \cF)$ on $X_0$. We continue to denote it by $\cG_m$. 
	As $X_0$ is of codimension at least $2$, we can extend $\cG_m$ to be a subsheaf of $ \det \cF \otimes (\otimes^{m+1} \cF)$ on $X$, which we still denote by $\cG_m$. 	Then, from \eqref{curl2}, we obtain
	\[
	\mu_{\alpha} (\cG_m) \geq - (r+2)\int_X \omega_{\widehat{X}}  \wedge g^{n-1}.
	\]
	Since $(r+2)\int_X \omega_{\widehat{X}}  \wedge g^{n-1}$ is independent of $m$, we deduce that
	$
	\mu_{\alpha, \max} (\otimes^{m+1} \cF) \geq C
	$
	for some constant $C$ independent of $m$. 
	Finally, applying Theorem \ref{sum}, we conclude that
	$
	\mu_{\alpha, \max} (\mathcal{F}) \geq \frac{C}{m+1}.
	$
	Letting $m \to +\infty$, we complete the proof.
\end{proof}
\medskip

\noindent We end this section with a few considerations about the notion of \emph{numerical dimension} of a real 
$(1,1)$-class $\{\alpha\}$. 

Consider a Kähler metric $\omega$ on a $n$-dimensional manifold $X$. For each positive $\ep> 0$ we define $\alpha[-\ep\omega]$ to be the set of currents $T\in \{\alpha\}$ in the class $\{\alpha\}$ such that $T$ is of analytic singularity and 
$T\geq -\ep\omega$. The following notion is the natural generalisation of the familiar numerical dimension of a nef line bundle.

\begin{defn}\cite{Bou}
Let $\{\alpha\}$ be a real $(1,1)$-class. The numerical dimension of $\{\alpha\}$ is equal to
\[\nu(X, \{\alpha\}):= \max \big\{k\in \bZ_+ : \lim\!\!\!\!\!\!\sup _{\ep> 0, T\in \alpha[-\ep\omega]}\int_{X\setminus Z_T}T^k\wedge \omega^{n-k}> 0\big\},\]
where $Z_T\subset X$ is the set of singularities of $T$. In case $\{\alpha\}$ does not contain any closed positive current, we simply say that $\nu(X, \{\alpha\})= -\infty$.
\end{defn}

We refer to \cite{Bou} for the basic properties of this notion. The following statement is immediate.
\begin{lemma}\label{ndim}
Let $\{\alpha\}$ and $\{\beta\}$ be two pseudo-effective $(1,1)$-classes. Then we have 
\[\nu(X, \{\alpha+ \beta\})\geq \nu(X, \{\alpha\}).\] 
\end{lemma}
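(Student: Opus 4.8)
The plan is to show that if $\{\alpha\}$ and $\{\beta\}$ are pseudo-effective $(1,1)$-classes, then the numerical dimension cannot drop when passing from $\{\alpha\}$ to $\{\alpha+\beta\}$. The key observation is that one can add a closed positive current in the class $\{\beta\}$ to any current witnessing the numerical dimension of $\{\alpha\}$, and positivity can only make the relevant intersection numbers bigger. Concretely, fix a Kähler metric $\omega$ on $X$, set $k := \nu(X,\{\alpha\})$, and pick a sequence of currents $T_j \in \alpha[-\ep_j\omega]$ with $\ep_j \to 0$ and analytic singularities such that $\int_{X\setminus Z_{T_j}} T_j^k \wedge \omega^{n-k}$ stays bounded away from $0$. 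Since $\{\beta\}$ is pseudo-effective, one can also choose, for each $j$, a current $S_j \in \beta[-\ep_j\omega]$ with analytic singularities (using Demailly regularization to approximate a fixed closed positive current in $\{\beta\}$ from above while keeping the $-\ep_j\omega$ lower bound). Then $T_j + S_j$ lies in $\{\alpha+\beta\}$, has analytic singularities, and satisfies $T_j + S_j \geq -2\ep_j\omega$, so after rescaling $\omega$ it is an admissible competitor for $\nu(X,\{\alpha+\beta\})$.

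The heart of the argument is then the comparison
\[
\int_{X\setminus Z_{T_j+S_j}} (T_j+S_j)^k \wedge \omega^{n-k} \;\geq\; \int_{X\setminus Z_{T_j+S_j}} T_j^k \wedge \omega^{n-k},
\]
which should follow by expanding $(T_j+S_j)^k$ by the binomial formula and discarding all mixed terms, each of which is a wedge of the almost-positive currents $T_j$, $S_j$ (and $\omega$) and hence has nonnegative mass away from the singular set — at least up to an error of order $\ep_j$ coming from the fact that $T_j, S_j$ are only $\geq -\ep_j\omega$ rather than genuinely positive. Since $Z_{T_j}\subset Z_{T_j+S_j}$, the right-hand side differs from $\int_{X\setminus Z_{T_j}} T_j^k\wedge\omega^{n-k}$ only by the integral over $Z_{T_j+S_j}\setminus Z_{T_j}$, which is a lower-dimensional analytic set of Lebesgue measure zero and therefore does not contribute. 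Taking $j\to\infty$ and noting $\ep_j\to 0$, the $\limsup$ defining $\nu(X,\{\alpha+\beta\})$ at level $k$ is positive, hence $\nu(X,\{\alpha+\beta\})\geq k = \nu(X,\{\alpha\})$.

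I expect the main technical obstacle to be the bookkeeping around the non-genuine positivity of the currents, i.e. tracking the $\ep_j\omega$ corrections through the Monge–Ampère products of currents with analytic singularities and making sure the mixed terms are controlled. This is precisely the kind of estimate that underlies Boucksom's definition, so I would simply cite \cite{Bou} for the fact that $\nu$ is monotone with respect to adding pseudo-effective classes, rather than redo the capacity-theoretic estimates; the statement is indeed immediate once one has those foundational results at hand. One should also handle the degenerate cases at the outset: if $\{\alpha\}$ contains no closed positive current then $\nu(X,\{\alpha\})=-\infty$ and there is nothing to prove, and if $\nu(X,\{\alpha\})=0$ the inequality is trivial since $\{\alpha+\beta\}$ is pseudo-effective and so has numerical dimension $\geq 0$.
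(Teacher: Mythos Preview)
Your approach is essentially the same as the paper's: pick $T_\ep\in\alpha[-\ep\omega]$ computing $\nu(X,\{\alpha\})$, regularize a fixed closed positive current $\Theta\in\{\beta\}$ to get $\Theta_\ep\geq -\ep\omega$, and compare the resulting Monge--Amp\`ere masses. The one point where the paper is cleaner is exactly the ``bookkeeping around the non-genuine positivity'' that you flag as the main obstacle: instead of tracking $O(\ep)$ error terms or citing \cite{Bou}, the paper simply adds $2\ep\omega$ and compares
\[
\int_{X\setminus Z_\ep}(T_\ep+\Theta_\ep+2\ep\omega)^d\wedge\omega^{n-d}\ \geq\ \int_{X\setminus Z_\ep}(T_\ep+\ep\omega)^d\wedge\omega^{n-d},
\]
which is immediate because $T_\ep+\ep\omega\geq 0$ and $\Theta_\ep+\ep\omega\geq 0$, so every term in the binomial expansion of the difference is a product of genuinely positive forms. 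Letting $\ep\to 0$ finishes the argument with no error analysis needed.
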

\begin{proof}
Let $T_\ep\in \alpha[-\ep\omega]$ be a family of currents computing the numerical dimension, say $d$, of $\{\alpha\}$. We consider a closed positive current $\Theta\in \{\beta\}$, together with its regularisation $\Theta_\ep\geq -\ep \omega$. The following inequality
\[\int_{X\setminus Z_\ep}(T_\ep+ \Theta_\ep+ 2\ep \omega)^d\wedge \omega^{n-d}\geq \int_{X\setminus Z_\ep}(T_\ep+ \ep \omega)^d\wedge \omega^{n-d}\]
is clear, and by letting $\ep\to 0$ our lemma is proved.
\end{proof}



\begin{thebibliography}{6}
\bibitem[1]{BP} Berndtsson, Bo and P\u{a}un Mihai :\ {\em Bergman kernels and the pseudoeffectivity of relative canonical bundles,} Duke Math. J. 145 (2008), no. 2, 343--378. 
	

	
\bibitem[2]{BM16} Bogomolov, Fedor  and  McQuillan, Michael: {\em Rational curves on foliated varieties.} In Foliation theory in algebraic geometry, Simons Symp., pages 21–51. Springer, Cham, 2016.
	
		\bibitem[3]{Bos01} Bost,  Jean-Benoît: {\em Algebraic leaves of algebraic foliations over number fields.} Publ. Math. Inst. Hautes Études Sci., (93):161–221, 2001.
	
	\bibitem[4]{Bos04} Bost, Jean-Benoît: {\em Germs of analytic varieties in algebraic varieties: canonical metrics and arithmetic algebraization theorems.} In Geometric aspects of Dwork theory. Vol. I, II, pages 371– 418. Walter de Gruyter, Berlin, 2004.
	
\bibitem[5]{Bou} Boucksom, Sébastien	{\em Divisorial Zariski decompositions on compact complex manifolds.}
	Ann. Sci. École Norm. Sup. (4) 37 (2004), no. 1, 45–76.
	
\bibitem[6]{BDPP}  Boucksom, Sébastien;  Demailly, Jean-Pierre; Păun, Mihai  and  Peternell, Thomas {\em The pseudo-effective cone of a compact Kähler manifold and varieties of negative Kodaira dimension} J. Algebraic Geom. 22 (2013), 201-248.

\bibitem[7]{Bru1} Bruasse, Laurent
{\em Harder-Narasimhan filtration on non Kähler manifolds.}
Internat. J. Math. 12 (2001), no. 5, 579–594.

\bibitem[8]{Bru2} Bruasse, Laurent
{\em Filtration de Harder-Narasimhan pour des fibrés complexes ou des faisceaux sans torsion.}
Ann. Inst. Fourier (Grenoble) 53 (2003), no. 2, 541–564.

\bibitem[9]{Brun1}
Brunella, Marco
{\em Feuilletages holomorphes sur les surfaces complexes compactes.}
Ann. Sci. École Norm. Sup. (4) 30 (1997), no. 5, 569–594.

\bibitem[10]{Brun2}
Brunella, Marco
{\em A positivity property for foliations on compact Kähler manifolds.}
Internat. J. Math. 17 (2006), no. 1, 35–43.

	
\bibitem[11]{CP19}   Campana, Frédéric and Păun, Mihai: {\em Foliations with positive slopes and birational stability of orbifold cotangent bundles} Publ. Math. Inst. Hautes Études Sci., 129:1–49, 2019.

\bibitem[12]{C} Campana, Frédéric: {\em Local projectivity of Lagrangian fibrations on Hyperk\"ahler manifolds.} Man. Math. 164 (2021), 589-591.

\bibitem[13]{C21}  Campana, Frédéric: {\em The Bogomolov-Beauville-Yau decomposition theorem for KLT projective varieties with trivial canonical class-without tears.} Bull. Soc. Math. France 1489 (2021).

\bibitem[14]{CPet} Campana, Frédéric and Peternell, Thomas: {\em Geometric stability of the cotangent bundle and the universal cover of projective manifolds.}  Bull.Soc. Math. France 139 (2011), 41-74. With an Appendix by M. Toma. 


\bibitem[15]{CCP} 
Campana, Frédéric; Cao, Junyan and Păun, Mihai
{\em Subharmonicity of direct images and applications.}
Convex and complex: perspectives on positivity in geometry, 53–82.
Contemp. Math., 810, American Mathematical Society, Providence, RI, [2025],


\bibitem[16]{CH} Cao, Junyan and Höring, Andreas: {\em  A decomposition theorem for projective manifolds with nef anticanonical bundle.} Journal für die reine und angewandte Mathematik 2014 (724).

\bibitem[17]{CP17} Cao, Junyan and Păun, Mihai: {\em Kodaira dimension of algebraic fiber spaces over abelian varieties.} Invent. Math., Volume 207, pages 345–387 (2017).

\bibitem[18]{CT}  Collins, Tristan C.  and Tosatti, Valentino:  {\em An extension theorem for Kähler currents with analytic singularities} Annales de la Faculté des sciences de Toulouse : Mathématiques, Série 6, Tome 23 (2014) no. 4, pp. 893-905.

\bibitem[19]{Dem}   Demailly, Jean-Pierre:  {\em Complex analytic and differential geometry} https://www-fourier.ujf-grenoble.fr/~demailly/documents.html

\bibitem[20]{Dem93}  Demailly, Jean-Pierre:  {\em A numerical criterion for very ample line bundles} J. Differential Geom., 37(2):323–374, 1993.

\bibitem[21]{Dem12}   Demailly, Jean-Pierre:  {\em Analytic methods in algebraic geometry} volume 1 of Surveys of Modern Mathematics. International Press, Somerville, MA; Higher Education Press, Beijing, 2012.

\bibitem[22]{DP}  Demailly, Jean-Pierre and  Păun, Mihai: {\em Numerical characterization of the Kähler cone of a compact Kähler manifold} Annals of Mathematics, 159 (2004), 1247–1274

\bibitem[23]{Dru1} Druel, Stéphane: {\em On foliations with nef anti-canonical bundle}
Trans. Amer. Math. Soc. 369, 11 (2017), 7765-7787.

\bibitem[24]{Dru} Druel,  Stéphane: {\em A decomposition theorem for singular spaces with trivial canonical class of dimension at most five} Invent. Math., 211(1):245–296, 2018.

\bibitem[25]{Fav}  Favre, Charles:  {\em Note on pull-back and Lelong number of currents} 
Bulletin de la Société Mathématique de France, Tome 127 (1999) no. 3, pp. 445-458.

\bibitem[26]{Fuj} A. Fujiki: {\em Deformation of uniruled manifolds.} Publ. RIMS 17 (1981), 687-702.

\bibitem[27]{Gau} Gauduchon, P. {\em Le théorème de l’excentricité nulle,} C. R. Acad. Sci. Paris Séri. A-B
285 (1977), no. 5, A387–A390.

\bibitem[28]{GHS} T. Graber, J. Harris and J. Starr, {\em Families of rationally connected varieties, } J. Amer. Math. Soc. 16 (2003), no. 1, 57–67.

\bibitem[29]{HG} Guenancia, Henri: {\em Families of conic Kähler-Einstein metrics.}
Math. Ann. 376 (2020), no. 1-2, 1–37.

\bibitem[30]{HP24} Hacon, C.D.; Paun, M. : {\em On the canonical bundle formula and adjunction for generalised K\"ahler pairs. } arXiv:2404.12007.

\bibitem[31]{HP19} Höring, Andreas  and  Peternell, Thomas {\em Algebraic integrability of foliations with numerically trivial canonical bundle} Invent. Math., 216(2):395–419, 2019.

\bibitem[32]{Hor} Hörmander, Lars: {\em An Introduction to Complex Analysis in Several Variables} North-Holland Mathematical Library, 3rd Revised Edition.

\bibitem[33]{Lam}  Lamari, Ahcène: {\em Courants kählériens et surfaces compactes}
Annales de l'Institut Fourier, Tome 49 (1999) no. 1, pp. 263-285.

\bibitem[34]{Lev} M. Levine: {\em Deformations of uni-ruled varieties.} Duke Math. J. 48 (1981), 467-473.

\bibitem[35]{L-Y} J. Li and S.-T. Yau, {\em Hermitian-Yang-Mills connection on non-Kähler manifolds,} Mathematical aspects of string theory (San Diego, Calif., 1986), Adv. Ser. Math. Phys., vol. 1, World Sci. Publishing, Singapore, 1987, pp. 560–573. 

\bibitem[36]{L-T} M. Lübke and A. Teleman, {\em The Kobayashi-Hitchin correspondence, } World Scientific Publishing Co., Inc., River Edge, NJ, 1995.


\bibitem[37]{Ou}  Ou, Wenhao:  {\em A characterization of uniruled compact Kähler manifolds} arXiv 2501.18088


\bibitem[38]{MP1} Păun, Mihai : {\em On the Albanese map of compact Kähler manifolds
	with numerically effective Ricci curvature} Communications in Analysis and Geometry, Volume 9, Number 1, 35-60, 2001.

\bibitem[39]{MP2} Păun, Mihai: {\em Relative critical exponents, non-vanishing and metrics with minimal singularities.}
Invent. Math. 187 (2012), no. 1, 195–258.

\bibitem[40]{PT18} 
Păun, Mihai  and Takayama, Shigeharu: {\em Positivity of twisted relative pluricanonical bundles and their direct images,} J. Algebraic Geom. 27 (2018), no. 2, 211–272.

\bibitem[41]{Sho} Shokurov, Vyacheslav: {\em A non-vanishing theorem.} Izv. Akad. Nauk SSSR 49 (1985)

\bibitem[42]{Siu} Siu, Yum-Tong.:  {\em Invariance of Plurigenera,} Invent. Math., 134 (1998), 661–673.

\bibitem[43]{T21} Toma, Matei : {\em Properness criteria for families of coherent analytic sheaves}, arXiv, 1710.01484.
		

\end{thebibliography}
\end{document}